\definecolor{carmine}{rgb}{0.59, 0.0, 0.09}
\definecolor{mediumpersianblue}{rgb}{0.0, 0.4, 0.65}
\definecolor{persianplum}{rgb}{0.44, 0.11, 0.11}
\newcommand\bond[1]{\draw (#1) -- +(1,0)}
\newcommand\tcirc[3]{
	\ifthenelse{\equal{#1}{w}}{\filldraw[fill=white,draw=black] (#2) circle (0.08);}{}%
	\ifthenelse{\equal{#1}{b}}{\filldraw[black] (#2) circle (0.08);}{}%
	\draw (#2) node[above=2pt] {#3};
	}
\newcommand\tcross[2]{
	\draw (#1) node[above=2pt] {#2};
	\draw (#1) ++(-0.12,-0.12)-- +(0.24, 0.24);
	\draw (#1) ++(-0.12, 0.12)-- +(0.24,-0.24);
	}
\newcommand\DDnode[3]{
\ifthenelse{\equal{#1}{w}}{\tcirc{w}{#2}{#3}}{}		
\ifthenelse{\equal{#1}{b}}{\tcirc{b}{#2}{#3}}{}		
\ifthenelse{\equal{#1}{x}}{\tcross{#2}{#3}}{}		
}
\renewcommand\1{{\bf 1}}
\renewcommand\a{\alpha}
\renewcommand\b{\beta}
\newcommand\com[1]{}
\newcommand\CC{{\mathbb C}}
\newcommand\E{\mathcal{E}}
\newcommand\fp{\mathfrak{p}}
\newcommand\g{{\mathfrak g}}
\newcommand\gc[1]{\color{gray}#1}
\newcommand\h{\mathfrak{h}}
\newcommand\I{{\mathcal I}}
\renewcommand\l{\lambda}
\newcommand\m{\mathfrak{m}}
\newcommand\op[1]{\mathop{\rm #1}\nolimits}
\newcommand\p{\partial}
\newcommand\PP{{\mathbb P}}
\renewcommand\qed{\phantom{\underline{y}}\hfill\hfill$\square$}
\newcommand\RR{{\mathbb R}}
\newcommand\V{{\mathcal V}}
\newcommand\W{{\mathcal W}}
\newcommand\z{\sigma}
\theoremstyle{plain} 
\newtheorem{theorem}{Theorem}[section]
\newtheorem{proposition}[theorem]{Proposition}
\newtheorem{corollary}[theorem]{Corollary}
\newtheorem{lemma}[theorem]{Lemma}
\theoremstyle{definition}
\newtheorem{definition}[theorem]{Definition}
\newtheorem{remark}[theorem]{Remark}
\newcommand\Cc{{\mathscr C}}
\newcommand{\cT}{\mathcal{T}}
\newcommand{\w}{{\,{\wedge}\;}}
\newcommand{\ve}{\varepsilon}
 \newcommand{\cG}{\mathcal{G}}
 \newcommand{\cJ}{\mathcal{J}}
 \newcommand{\cK}{\mathcal{K}}
\newcommand{\half}{\textstyle{\frac 12}}
\newcommand{\txi}{\tilde\xi}
\newcommand{\tomega}{\tilde\omega}
\newcommand{\tphi}{\tilde\phi}
\newcommand{\tpsi}{\tilde\psi}
\newcommand{\tXi}{\tilde\Xi}
\newcommand{\tOmega}{\tilde\Omega}
\newcommand{\tPhi}{\tilde\Phi}
\newcommand{\tL}{\tilde L}
\newcommand{\tJ}{\tilde J}
\newcommand{\tg}{\tilde g}
\newcommand{\ttau}{\tilde \tau}
\begin{document}

\title[Zero-curvature and integrability in 5D]{Zero-curvature subconformal structures\\
and dispersionless integrability in dimension five}

\author[Boris Kruglikov]{Boris Kruglikov$^\dagger$}
\address{$^\dagger$ Department of Mathematics and Statistics, UiT the Arctic University of Norway, Troms\o\ 9037, Norway.}

\author[Omid Makhmali]{Omid Makhmali$^\dagger$$^\ddagger$}
\address{$^\ddagger$ Departamento de Geometr\'ia y Topolog\'ia and IMAG, Universidad de Granada, Granada 18071, Spain.}

\address{Email addresses:\qquad {\tt boris.kruglikov@uit.no}\quad\text{\rm and }\quad {\tt omid.makhmali@uit.no,\quad omakhmali@ugr.es}\hspace{1pt}.}


 \begin{abstract}
We extend the recent paradigm ``Integrability via Geometry'' from dimensions 3 and 4 to higher dimensions, 
relating dispersionless integrability of partial differential equations to curvature constraints of the background geometry.
We observe that in higher dimensions on any solution manifold the symbol defines a vector distribution equipped with 
a subconformal structure, and the integrability imposes a certain compatibility between them.

In dimension 5 we express dispersionless integrability via the vanishing of a certain curvature of
this subconformal structure.
We also obtain a ``master equation'' governing all second order dispersionless integrable equations in 5D,
and count their functional dimension.

It turns out that the obtained background geometry is parabolic of the type $(A_3,P_{13})$. 
We provide its Cartan theoretic description and compute the harmonic curvature components via the Kostant theorem.
Then we relate it to 3D projective and 4D conformal geometries via twistor theory, discuss symmetry reductions and 
nested Lax sequences, as well as give another interpretation 
of dispersionless integrability in 5D through Levi-degenerate CR structures~in~7D.
 \end{abstract}

\maketitle
\tableofcontents

%
%

\section{Introduction and main results}\label{Sec:1}

Lax pair formulation is one of the  most conventional approaches in the integrability analysis of differential equations.
For solitonic type equations the integrability is related to zero-curvature representation \cite{FT,KW}, while for 
dispersionless PDEs the situation is different. Namely, dispersionless Lax pair (dLp) no longer consists 
of higher order differential operators, but is reduced to a pair of vector fields with a spectral parameter, 
often also with terms including differentiation by the spectral parameter \cite{Za}. 
Such dLp allows convenient geometrization of the integrability via twistor theory \cite{JT,Hi,MW}, which eventually
for many classes of dispersionless systems in a more general context has been related to curvature properties 
of underlying background geometries \cite{MD,C,DFKN1,DFKN2}.



The integrability of second order PDEs, or more generally systems of PDEs
with quad\-ratic characteristic variety, was shown to be equivalent to the Einstein-Weyl property in 3D and
the self-duality property in 4D for the canonical conformal structure read off  the characteristic variety/symbol
of the equation \cite{CK}. Here the integrability is understood as the existence of a nontrivial dLp. 
Moreover, for several classes of examples, this was also shown to be equivalent to the existence of
hydrodynamic reductions, as developed in \cite{Ts,FKh}, see \cite{FK}.

Not much is known in dimensions beyond 4, except for sporadic examples \cite{Ta,PP,B,KS}.
In this paper we address the smallest of those dimensions, namely 5D. Let us note that
conformal geometry in 4D is one of the key examples of the so-called parabolic geometry, and
that self-duality is expressed as vanishing of a part of its curvature, namely the anti-self-dual Weyl tensor. Similarly,
in 3D, Weyl geometry is a structure reduction of conformal geometry, and the Einstein-Weyl condition is expressed 
as vanishing of a part of the curvature, namely the trace-free part of the symmetrized Ricci tensor of the Weyl connection.

It turns out that the situation in 5D is similar, and dispersionless integrability can be written as
a zero-curvature condition for a contact parabolic geometry.

\subsection{Subconformal structures on solutions}\label{Sec:1.1}

Let us explain the main ingredients of this article in the case of a scalar  second order PDE, 
which will be fundamental for the  more general case of PDE systems in Section \ref{Sec:3.1}. 
A scalar second order PDE can be expressed as
 \begin{equation}\label{F}
\E\;:\; F(x,u,\p u, \p^2 u)=0
 \end{equation}
for a scalar function $u$ of an independent variable $x$ on a connected
manifold $M$ with $\dim M=d$, where $\p u=(u_i)$ and $\p^2u=(u_{ij})$ denote
partial derivatives of $u$ in local coordinates $x=(x^i)$.  

Let $M_u$ denote the manifold $M$ equipped with a scalar function $u$; 
we may view $M_u$ as the graph of $u$ in $M\times\RR$ or, as we will see later, in the appropriate jet-space.
A tensor on $M_u$ is, by definition, a tensor on $M$, which may also depend, at each $x\in M$, 
on finitely many derivatives of $u$ at $x$.

Let $\ell_F(u)$ be the \emph{linearization} on a solution $u$ of $\E$
 $$
\ell_F(u)v=\left.\frac{d}{d\epsilon}\right|_{\epsilon=0}F(u+\epsilon v),
 $$
which clearly depends on the background solution $u$ if $F$ is nonlinear, and
let $\z_F$ be the \emph{symbol}, i.e.\ the top degree of $\ell_F$ involving only second derivatives
 \begin{equation*}
\z_F = \sum_{i\leq j}\frac{\p F}{\p u_{ij}}\,\p_i\p_j=
\sum_{i,j} \z_{ij}(u)\, \p_i\otimes \p_j.
 \end{equation*}
Invariantly, $\z_F$ defines a section of $S^2T M_u$, hence a quadratic form on
$T^*_x M_u$ for each $x\in M_u$.
Changing the defining function $F$ of $\E$ changes $\z_F$ by a conformal rescaling on $\E$.
Hence, the conformal class of $\z_F$ along $F=0$ is an invariant of $\E$, and so is
the \emph{characteristic variety} $\op{Char}(\E,u)\to M_u$, which is defined as a bundle whose fiber at $x\in M_u$ is
the projective variety
 $$
\op{Char}(\E,u)_x=\{[\theta]\in \PP(T^*_x M_u)\,|\, \z_F(\theta)=0 \}.
 $$
In what follows it is important  that this variety is a quadric, however it is degenerate.
Actually, the following is a direct corollary of the characteristic property of \cite[Theorem 1]{CK},
hidden in the discussion of the coisortopic propery of dLp in loc.cit.

 \begin{theorem}[\cite{CK}]
For integrable determined PDE systems in dimension $d>4$ with quadratic characteristic variety,
the bilinear form $\z_F$ and hence the characteristic variety are degenerate.
In fact, the rank of the symmetric matrix $[\z_{ij}]$ does not exceed 4.
 \end{theorem}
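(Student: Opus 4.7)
The plan is to combine the characteristic property of a dispersionless Lax pair (dLp) as formulated in \cite[Theorem 1]{CK} with a short piece of projective linear algebra on quadrics. Concretely, the pointwise content I would extract from \cite{CK} is the following: on a solution $u$ of $\E$, a nontrivial dLp produces at each $x$ a $\PP^1$-family of 2-planes $\Pi_\l \subset T_x M_u$, namely the spans of the Lax vector fields evaluated at $x$, whose annihilators sweep out the characteristic quadric,
\[
\op{Char}(\E,u)_x \;=\; \bigcup_{\l \in \PP^1} \PP(\Pi_\l^{\perp}) \;\subset\; \PP(T^*_x M_u).
\]
This is exactly the coisotropy/characteristic statement alluded to in the preceding paragraph, and it is the only analytic input the argument will need.

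Since $\Pi_\l \subset T_xM_u$ has dimension $2$, the annihilator $\Pi_\l^{\perp} \subset T^*_x M_u$ has dimension $d-2$, so $\PP(\Pi_\l^{\perp})$ is a \emph{projective linear subspace} of dimension $d-3$ contained in the characteristic quadric $\{\z_F=0\} \subset \PP^{d-1}$. I would then invoke the standard structural description of a projective quadric: a quadric in $\PP^{d-1}$ cut out by a symmetric bilinear form of rank $r$ is the projective join of its singular vertex $\PP^{d-r-1}$ with a smooth quadric in $\PP^{r-1}$, and the largest projective linear subspace it contains has dimension $d-\lceil r/2\rceil - 1$. Comparing with the $(d-3)$-plane produced above forces $d-3 \leq d-\lceil r/2\rceil-1$, i.e.\ $\lceil r/2\rceil \leq 2$, hence $r\leq 4$. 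Because $d>4$, this is a strict degeneracy of $[\z_{ij}]$, as claimed.

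The principal obstacle sits entirely in the first step: one must read out of \cite{CK} not just that the Lax vector fields are characteristic, but that their common annihilator appears on $\op{Char}(\E,u)_x$ as a \emph{full} linear $(d-3)$-plane and that the $\l$-sweep exhausts the quadric. A mere containment of some proper subset of $\PP(\Pi_\l^{\perp})$ in the characteristic would not force a linear subspace large enough to constrain the rank. Once the sweep decomposition is correctly identified, the rank bound is forced by the projective geometry of quadric cones and is independent of the finer analytic structure of $\E$; the hypothesis $d>4$ enters only to ensure that the universal bound $r\leq 4$ is genuinely a degeneracy statement.
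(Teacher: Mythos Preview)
Your proposal is correct and matches what the paper has in mind: the paper itself gives no detailed argument, it simply points to the coisotropic/characteristic property of the dLp in \cite{CK}, and your write-up is precisely the unpacking of that reference, namely $\PP(\Pi_\lambda^{\perp})\subset\op{Char}(\E,u)_x$ together with the elementary bound on linear subspaces of a rank-$r$ quadric.

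One small overstatement worth trimming: you do \emph{not} need the equality $\op{Char}(\E,u)_x=\bigcup_\lambda\PP(\Pi_\lambda^{\perp})$, only the containment $\PP(\Pi_\lambda^{\perp})\subset\op{Char}(\E,u)_x$ for a single value of $\lambda$. A single projective $(d{-}3)$-plane inside the quadric already forces $d-3\le d-\lceil r/2\rceil-1$, hence $r\le4$. So the ``sweep exhausts the quadric'' clause in your obstacle paragraph is not required for the rank bound and can be dropped; what you genuinely need from \cite{CK} is only that the full annihilator $\Pi_\lambda^{\perp}$ consists of characteristic covectors, which is exactly the coisotropic statement the paper invokes.
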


In this paper we assume that $\op{rank}[\z_{ij}]$ is maximal, i.e.\ equal to 4 on generic solutions $u$ of $\E$.
This means that the vector distribution (obtained by contraction)
 $$
\Delta_\E=\langle\sigma_F,T^*M_u\rangle\subset TM_u
 $$
has rank 4. In general, $\Delta_\E$ is a non-integrable and even nonholonomic distribution of rank 4 on $M_u$
for generic solution $u$ of $\E$. Moreover, in what follows we assume that $d=\dim M_u=5$ and that $\Delta_\E$
is a \emph{contact distribution} on $M_u$.

The symmetric bivector $\sigma_F$ is nondegenerate on $\Delta_\E$
and can be inverted to give sub-pseudo-Riemannian structure defined up to scale, i.e.\
subconformal structure $c_\E=[g_F]$ on any solution:
 $$
g_F=(\z_F|_{\Delta_\E})^{-1}\in S^2\Delta_\E^*,\quad\text{where}\quad [g_{ij}] =[\z_{ij}]^{-1}.
 $$
In what follows, we assume that the conformal structure $c_\E$ has neutral signature $(2,2)$.
In other signatures one still can use the same approach via complexification, yet
the real integrability conditions can be more restrictive. For instance, in 4-dimensional Lorentzian conformal structures 
the integrability implies that $F$ is the standard wave operator plus lower order terms.

For any point $x\in M_u$, recall that the neutral signature conformal structure $c_\E|_x$
on a 4-dimensional space $\Delta_\E(x)$  has 2 one-parameter families of isotropic planes 
 that is $\CC\PP^1$ in the complex case and $\RR\PP^1\simeq {S}^1$ in the real case. These are traditionally called $\a$- and $\b$-planes, 
or self-dual or anti-self-dual null planes, which are swapped upon the change of orientation.
The structure $(\Delta_\E,c_\E)$ on $M_u$ is called {\em compatible} if the $\a$-family can be chosen
to consist of Lagrangian planes with respect to the conformally symplectic structure on the contact distribution.
 In this case a generic $\b$-plane would not be Lagrangian. Note that the existence of dLp implies the existence of a 1-parameter family of surfaces whose tangent plane at each point lie in the contact distribution and are null with respect to the subconformal structure.  Since an integral surface of any field of contact planes is necessarily Legendrian, then its tangent planes are Lagrangian with respect to the induced conformal symplectic structure. In other words, the existence of dLp requires  the compatibility condition which will be assumed in what follows.

\subsection{An example of integrable PDE}\label{Sec:1.2}

Consider the following example of 5D version of the heavenly equation wherein $c$ is a constant:
 \begin{equation}\label{5ieq}
F=u_{15}+c\,u_{25}+u_{13}u_{24}-u_{14}u_{23}=0.
 \end{equation}
This equation is a travelling wave reduction of the well-known Takasaki-Plebansky-Przanowski equation in 6D 
\cite{Ta,PP}, and it in turn reduces to the first Plebansky (also known as heavenly), modified heavenly and 
Hussain's equations \cite{Pb,DF}.

The symbol of $F$ is $\z_F=v_1\cdot v_3+v_2\cdot v_4$ where 
$v_1=\p_1$, $v_2=\p_2$,
$v_3=\p_5-u_{23}\p_4+u_{24}\p_3$,
$v_4=c\,\p_5+u_{13}\p_4-u_{14}\p_3$;
 $$
\Delta=\langle v_1,v_2,v_3,v_4\rangle=\op{Ker}(\omega^0),
 $$
 \begin{multline*}
\omega^0=(u_{13}+c\,u_{23})\,dx^3+(u_{14}+c\,u_{24})\,dx^4+(u_{15}+c\,u_{25})\,dx^5\\
=d(u_1+c\,u_3)-(u_{11}+c\,u_{13})\,dx_1-(u_{12}+c\,u_{23})\,dx_2.
 \end{multline*}
Additionally,  on the contact plane $\Delta$ we have
 $$
g=\z_F^{-1}=\omega^1\omega^3+\omega^2\omega^4
 $$
where
 $$
\omega^1=dx^1,\ \omega^2=dx^2,\
\omega^3=\frac{c\,dx^3+u_{14}dx^5} {u_{14}+c\,u_{24}},\
\omega^4=\frac{dx^4+u_{23}dx^5} {u_{13}+c\,u_{23}}.
 $$
The $\beta$-planes $\op{Ker}\{\omega^1-\lambda\omega^2,\omega^4+\lambda\omega^3\}$
are not Lagrangian for $\Omega=d\omega^0$ except for 2 particular values of $\lambda$,
which can be projectively reparametrized to $0$ and $\infty$.

The $\alpha$-planes $\op{Ker}\{\omega^1+\lambda\omega^4,\omega^2-\lambda\omega^3\}
=\langle v_4-\lambda v_1,v_3+\lambda v_2\rangle$
is a family of Lagrangian  planes, and they give rise to the following dispersionless Lax pair for \eqref{5ieq}:
 $$
\hat\Pi=\langle \p_5-u_{23}\p_4+u_{24}\p_3+\lambda \p_2,
c\,\p_5+u_{13}\p_4-u_{14}\p_3-\lambda \p_1\rangle.
 $$

\subsection{Main results}\label{Sec:1.3}

In general, a \emph{dispersionless Lax pair}, referred to as \emph{dLp}, for~\eqref{F} is
a rank one covering system $\hat{\E}$ of $\E$ of a special kind \cite{Za,KV}.
Namely, there is a fiber bundle $\pi\colon \hat M_u\to M_u$ with
connected rank one fibers, and a rank 2 distribution $\hat\Pi\subset T\hat M_u$
such that its Frobenius integrability condition $[\hat\Pi,\hat\Pi]=\hat\Pi$ is a quasilinear overdetermined
system $\hat\E$ whose compatibility is $\E$. Often this latter condition is relaxed to
the claim that $\E$ is a differential corollary of $\hat M_u$, but it is assumed that $\E$ is restored up to
a simple integration/non-localities of potentiation kind; see a discussion in \cite{CK}.
In practice, one can choose generators of $\hat\Pi$ to be linearly independent vector
fields $\hat X$ and $\hat Y$ on $\hat M_u$, whose coefficients depend on finitely many
derivatives of $u$.

A fiber coordinate $\l\colon\hat M_u\to \RR$ is called a \emph{spectral parameter} and it locally
identifies $\hat M_u$ with $M_u\times\RR$. 
We may then write $\hat X=X+m\,\p_\l$, $\hat Y=Y+n\,\p_\l$ where $X,Y$ are $\l$-parametric
vector fields on $M_u$, and a section of $\pi\colon\hat M_u\to M_u$ may be
written $\l=q(x)$ for a function $q\colon M_u\to\RR$. The dLp $\hat\Pi$ then
has the geometric interpretation that $\E$ is the integrability condition for
the existence of many one-parameter families  of foliations  of $M_u$ by surfaces which
at any $x\in M_u$ are tangent to $\Pi=\pi_*(\hat\Pi)$ at $x$, with $\l=q(x)$.


By the mentioned (co)isotropic propery of the dLp,  we may thus identify $\hat M_u$ locally with the $\PP^1$-bundle 
whose fiber over $x\in M_u$ consists of all $\a$-planes. 
In this case, at each $x\in M$, $\Pi$ gives an immersion  $\PP^1\to\mathrm{Gr}(2,\Delta)$. 
Under this identification, with $\Pi$ considered as a $\PP^1$-bundle over $M_u$, 
the pullback $\pi^*\Pi\to\hat M_u$ is the tautological bundle.

Any \emph{Weyl connection} $\nabla$ on $M_u,$ i.e.\ a torsion free connection that preserves the conformal class $c_\E$ 
and depends on finitely many derivatives of $u$, induces a connection on the $\PP^1$-bundle $\hat M_u\to M_u$. 
Hence, it defines a horizontal lift of $\Pi$ to a rank 2 distribution $\hat\Pi_\nabla\subset T\hat M_u$.
It is well-known \cite{Pe,CK} that in 4D  the lift $\hat\Pi_\nabla$ is independent of $\nabla,$ i.e.\
\emph{conformally invariant}; the same applies in higher dimensions provided that $\op{rank}[\sigma_{ij}]=4$.


Now let $(\Delta_\E,c_\E)$ be a compatible subconformal structure on a contact distribution in 5D.
In Section \ref{Sec:2} we will associate to such structure a coframe $\omega^i$
on $M_u$ which is determined up to a projective reparametrization and rescalings, whose group action
depends on a discrete invariant $\op{sgn}\delta=\pm1$ that distinguishes between the CR and para-CR types.
The Cartan-Tanaka prolongation of this structure is of finite type and this implies
the fundamental curvature components. Important for us is a tensor $W=(W_i)_{0}^4$ generalizing the
self-dual part of the Weyl tensor in 4D conformal geometry, which we will describe in detail in Sections
\ref{Sec:2.2}-\ref{Sec:2.4}.

Similar to the situation in 4D, there exists a canonical lift of the $\PP^1$-bundle of $\alpha$-planes, i.e.\ 
the \emph{$\a$-congruence} on $M$, to a rank 2 distribution in 5D. This will be called a \emph{standard dLp}. 
Two Lax pairs are $\E$-equivalent if their restriction to the (infinitely prolonged) equation $\E$ coincide. A dLp is called \textit{nondegenerate} if it is not $\E$-equivalent to a dispersionless pair, for which the Frobenius integrability condition holds identically, also referred to \textit{off shell,} i.e. not as a corollary of $\E$, c.f. \cite[Definition 5]{CK}.

 \begin{theorem}\label{Th1}
Let $\E: F=0$ be a determined PDE system in 5D, whose characteristic variety $\op{Char}(\E)$ is a bundle of
quadric hypersurfaces of maximal rank 4 in $\PP\,T^*M_u$, and such that the subconformal structure
on contact distribution $(\Delta_\E,c_\E)$ on $M_u$ is compatible for (almost) every solution $u$.
Then any nondegenerate dLp $\hat\Pi$ is $\E$-equivalent to a standard dLp $\hat\Pi_\nabla$.
 \end{theorem}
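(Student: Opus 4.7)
I would approach this by reducing the statement to the classification of rank 2 lifts of the tautological distribution on the bundle of $\a$-planes, and then matching that parametrization with the affine action of Weyl connections on $(M_u,c_\E)$.

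First, by the coisotropic property of a dLp recalled from \cite{CK}, the pushforward $\Pi=\pi_*\hat\Pi\subset TM_u$ is, at each point, a 2-plane inside $\Delta_\E$ that is isotropic for $\z_F$. The nondegeneracy of $\hat\Pi$ together with the compatibility hypothesis force $\Pi$ to consist of $\a$-planes; since generic $\a$-planes are Lagrangian for the contact conformally symplectic form on $\Delta_\E$, one obtains a canonical local identification of $\hat M_u$ with the $\PP^1$-bundle of $\a$-planes over $M_u$, under which $\Pi$ becomes the tautological distribution. Any nondegenerate $\hat\Pi$ is then a rank 2 lift of this canonical $\Pi$, transverse to the fibers of $\pi$.

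Fix any reference Weyl connection $\nabla_0$ on $(M_u,c_\E)$ and consider the standard lift $\hat\Pi_{\nabla_0}$. Writing local generators as $\hat\Pi_{\nabla_0}=\langle X+m_0\p_\l,\,Y+n_0\p_\l\rangle$, any other transverse rank 2 lift of $\Pi$ takes the form $\hat\Pi=\langle X+(m_0+f)\p_\l,\,Y+(n_0+g)\p_\l\rangle$, so the pair $(f,g)$ encodes the difference $\hat\Pi-\hat\Pi_{\nabla_0}$ as a section of the affine bundle of lifts (whose underlying vector bundle is $\pi^*\Pi^*\otimes V$, where $V$ denotes the vertical bundle of $\pi$). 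Imposing the Frobenius condition $[\hat\Pi,\hat\Pi]\subset\hat\Pi$ modulo $\E$, and using that $\hat\Pi_{\nabla_0}$ itself satisfies this modulo the harmonic curvature tensor $W=(W_i)$ constructed in Section \ref{Sec:2}, which vanishes on $\E$ under the integrability assumption, the residual equations become linear in $(f,g)$ and their $\p_\l$-derivatives. Using that the $\a$-plane generators depend affinely on $\l$ as in the example of Section \ref{Sec:1.2}, one shows that $f$ and $g$ must be polynomial in $\l$ of bounded degree, and that the admissible coefficient tuples form an affine space whose dimension matches $\dim\Omega^1(M_u)=5$.

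Finally, one compares this parametrization with the 5-parameter affine action of Weyl connections $\nabla=\nabla_0+\Upsilon$, $\Upsilon\in\Omega^1(M_u)$, on the standard lift. A direct computation of the transformation law of the pair $(m_0,n_0)$ under a shift of $\nabla_0$ shows that this action is simply transitive on the solution space of the Frobenius equations, whence $\hat\Pi=\hat\Pi_\nabla$ on (the infinite prolongation of) $\E$ for a unique $\nabla$. The principal obstacle I foresee is the polynomial analysis in the third paragraph: bounding the degree of $(f,g)$ in $\l$ and verifying that the Weyl action exhausts the full solution space requires careful bookkeeping of the projective $\l$-reparametrization freedom and of the two cases $\ve=\pm1$. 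This amounts to a weight computation in the parabolic $(A_3,P_{13})$ decomposition, made tractable by the Kostant-type harmonic curvature description prepared later in the paper.
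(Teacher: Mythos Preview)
Your approach has a basic misconception at its core. The paper states explicitly in Section~\ref{Sec:1.3} that the standard lift $\hat\Pi_\nabla$ is \emph{independent} of the choice of Weyl connection $\nabla$ (this is the conformal invariance inherited from the 4D case once $\op{rank}[\sigma_{ij}]=4$). Hence the map $\nabla\mapsto\hat\Pi_\nabla$ is constant, not simply transitive, and there is no 5-parameter affine family of standard lifts to match. Your third and fourth paragraphs therefore collapse: even if you could show that Frobenius-on-shell lifts form a 5-dimensional affine space (which is itself doubtful---see below), you would be trying to cover it by a single point.

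The paper's actual argument is both simpler and structurally different. It does not use Frobenius integrability of $\hat\Pi$ at all. Instead it introduces the weaker \emph{normality} condition $[\hat\Pi,\hat\Pi]\subset\pi_*^{-1}\Pi$ off shell, and shows two things. First (Lemma~\ref{lem4D}), in Darboux coordinates the horizontal components $dp([\hat X,\hat Y])$ and $dq([\hat X,\hat Y])$ give a $2\times2$ linear system in $(m,n)$ whose matrix is $\bigl(\begin{smallmatrix}a_\l&b_\l\\b_\l&c_\l\end{smallmatrix}\bigr)$; nondegeneracy is precisely the invertibility of this matrix, so the normal lift exists and is unique---this \emph{is} the standard lift. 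Second (Proposition~\ref{p:normal}), for an arbitrary dLp one has $dp\circ\pi_*[\hat X,\hat Y]=\Box_1F$ and $dq\circ\pi_*[\hat X,\hat Y]=\Box_2F$ for some $\Cc$-differential operators $\Box_i$; modifying $\hat X\mapsto\hat X+A(F)\p_\l$, $\hat Y\mapsto\hat Y+B(F)\p_\l$ and solving the same invertible $2\times2$ system for $(A,B)$ yields the unique $\E$-equivalent normal lift.

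Note also that your invocation of ``$W$ vanishes on $\E$ under the integrability assumption'' is premature: Theorem~\ref{Th1} is a structural statement about dLps that feeds \emph{into} the proof of Theorem~\ref{Th2}, and must be established without assuming the zero-curvature condition. The polynomial-in-$\l$ analysis you propose is unnecessary once normality is isolated as the correct intermediate condition.
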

 
Our  main result establishes an equivalence between the
dispersionless integrability of $\E$ and the zero-curvature property of $c_\E$.

 \begin{theorem}\label{Th2}
Under the same assumptions as in Theorem \ref{Th1}, $\E$ is integrable by a nondegenerate dLp
if and only if the zero-curvature condition holds nontrivially on solutions of $\E$.
 \end{theorem}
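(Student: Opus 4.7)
The plan is to reduce both implications to the Frobenius integrability of a canonical rank two distribution on the twistor-like $\PP^1$-bundle $\hat M_u\to M_u$ whose fiber at $x$ parametrizes $\alpha$-planes in $\Delta_\E(x)$. By Theorem \ref{Th1}, any nondegenerate dLp on $\E$ is $\E$-equivalent to a standard one $\hat\Pi_\nabla$, obtained as the horizontal lift of the $\alpha$-congruence via a Weyl connection $\nabla$; since $\op{rank}[\sigma_{ij}]=4$, this lift is conformally invariant and hence determined by $(\Delta_\E,c_\E)$ alone. Consequently, dispersionless integrability of $\E$ is equivalent to the Frobenius integrability of $\hat\Pi_\nabla$, interpreted on (the infinite prolongation of) $\E$.

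The next step is to compute $[\hat X,\hat Y]\mod\hat\Pi_\nabla$ using the structure equations of the contact-parabolic geometry of type $(A_3,P_{13})$ constructed in Section \ref{Sec:2}, in a Cartan frame adapted to the choice of an $\alpha$-plane. Because the fibers of $\hat M_u\to M_u$ are copies of $\PP^1$, this obstruction is a polynomial expression in the spectral parameter $\lambda$. The key identification is that, modulo invertible prefactors and the projective reparametrization gauge of Section \ref{Sec:2}, the obstruction coincides with
$$
W(\lambda)=W_0+W_1\lambda+W_2\lambda^2+W_3\lambda^3+W_4\lambda^4,
$$
where $W=(W_i)_0^4$ is the harmonic curvature component singled out in Sections \ref{Sec:2.2}--\ref{Sec:2.4} via the Kostant theorem. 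The remaining harmonic curvature pieces should be excluded by their $P_{13}$-weights or by the compatibility hypothesis, which constrains the $\alpha$-family to be Lagrangian for the conformally symplectic form on $\Delta_\E$ and thus kills the components that would otherwise contribute to $[\hat X,\hat Y]\mod\hat\Pi_\nabla$ at the wrong weight in $\lambda$.

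Once this identification is in place, the two directions follow quickly. For the forward direction, existence of an integrable $\hat\Pi_\nabla$ forces the polynomial $W(\lambda)$ to vanish for every $\lambda\in\PP^1$, hence $W=0$ on every solution of $\E$, yielding the zero-curvature condition; its nontriviality reflects that $W$ need not be an off-shell identity but is imposed exactly on the solution manifold, mirroring the 3D Einstein--Weyl and 4D self-duality situations. For the converse, vanishing of $W$ on $\E$ renders $\hat\Pi_\nabla$ Frobenius integrable, and a standard twistor/Penrose-type argument integrates it to a one-parameter family of foliations of $M_u$ by $\alpha$-surfaces, reconstructing the dLp by the same coisotropic recipe used in the example of Section \ref{Sec:1.2}. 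The main obstacle is precisely the Cartan-theoretic matching between the Frobenius obstruction polynomial and $W(\lambda)$; this requires careful bookkeeping of the contribution of the connection term from $\nabla$ versus the intrinsic curvature of the parabolic geometry, and a verification that the non-$W$ harmonic components of the curvature truly drop out when the $\alpha$-congruence is compatible.
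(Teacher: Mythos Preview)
Your overall plan is sound and matches the paper's, but there are two points where you diverge from what is actually needed.

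First, the identification of the Frobenius obstruction with $W(\lambda)$ is much more elementary than you suggest. The paper does not compute $[\hat X,\hat Y]\bmod\hat\Pi_\nabla$ via the Cartan connection and Kostant's theorem; it simply writes down the lift $\hat\Pi$ of the $\alpha$-congruence in an adapted frame (Section~\ref{Sec:2.2}), computes the bracket directly from the structure coefficients $c_{ij}^k$, and reads off that the $\p_\lambda$-component is a quartic $W(\lambda)=\sum W_i\lambda^i$. There is no need to argue that the torsion components $\tau_\pm$ ``drop out by weight'': the na\"ive computation already produces exactly $W$, and the parabolic interpretation in Sections~\ref{Sec:2.3}--\ref{Sec:2.4} is \emph{a posteriori}. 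So your ``main obstacle'' is not an obstacle at all.

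Second, and more seriously, your treatment of nontriviality is incomplete. In the forward direction you say nontriviality ``reflects that $W$ need not be an off-shell identity'', but the actual reason is Definition~\ref{DIS}: since $\hat\Pi$ is a dLp, the integrability of \emph{every} $\E$-equivalent $\hat\Pi'$ is a nontrivial corollary of $\E$; applying this to $\hat\Pi'=\hat\Pi_\nabla$ (which is $\E$-equivalent by Theorem~\ref{Th1}) gives that $W=0$ is nontrivial. In the converse direction you stop after observing that $W=0$ makes $\hat\Pi_\nabla$ Frobenius integrable on solutions. That is not enough to conclude $\hat\Pi_\nabla$ is a dLp: you must rule out the possibility that some $\E$-equivalent $\hat\Pi'$ has its integrability following from a \emph{proper} subsystem $\E'\subsetneq\E$. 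The paper closes this by a contrapositive: if such an $\E'$ existed, the forward argument applied to $\E'$ would force $W=0$ to be a corollary of $\E'$, contradicting the assumed nontriviality of $W=0$ for $\E$. Your twistor/Penrose remark about integrating to $\alpha$-surfaces is unnecessary for the proof and does not address this point.
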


Finally let us give a presentation of master-equation for dispersionless integrable equations in 5D
with contact characteristic distribution. Here by ``master'' we mean two following properties:
 \begin{itemize}
\item Any integrable background subconformal geometry in 5D is locally described by this equation, 
as a particular solution of the PDE on the functional parameters;
\item Any integrable dispersionless equation with the specified conditions on the characteristic variety
is a reduction of this equation, meaning that all solutions of the given PDE are locally reparametrizations of
the solutions of the master equation.
 \end{itemize}
Such master equation in 3D was identified in \cite{DFK} with the Manakov-Santini equation,
governing Einstein-Weyl structures, and in 4D a determined PDE was derived in \cite{DFK},
governing the self-dual conformal structures.

This can be considered as the quotient problem of the space of all background integrable geometries in 5D
by the pseudogroup of local diffeomorphisms, where the residual gauge transformations 
can be eliminated via the differential invariants approach, as was done in \cite{KS1,KS2} for the EW and SD systems.

Leaving the precise form of the master-equation to Section \ref{Sec:3} let us preview the result.

 \begin{theorem}\label{Th3}
General zero-curvature integrable subconformal structures on contact distributions in 5D are
given locally by the contact form $dr-p\,dx-q\,dy$ and the conformal metric
 \begin{align*}
g=&\, (dx + (u+v)\,dy)\cdot(dq - (u-v)\,dp + w\,dx + (z-w(u-v))\,dy)\\
 +&\, (dx + (u-v)\,dy)\cdot(dq - (u+v)\,dp + w\,dx + (z-w(u+v))\,dy),
 \end{align*}
where the functions $u,v,w,z$ of variables $x,y,p,q,r$ satisfy PDEs $\{W_i=0\}_{i=0}^3$ given by \eqref{W0123}.
This system is itself integrable via a dispersionless Lax pair (\ref{MEdLp1}-\ref{MEdLp3}). 
The local moduli space of zero-curvature integrable subconformal structures is parametrized by 8 functions of 4 variables.
 \end{theorem}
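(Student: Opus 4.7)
The plan is to establish the four claims of the theorem -- the existence of the stated normal form, the explicit master PDE system, the dispersionless Lax pair, and the functional dimension count -- in turn.

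First, by the contact Darboux theorem any contact structure on a $5$-manifold can be brought locally to $\Delta = \ker(dr - p\,dx - q\,dy)$ in coordinates $(x,y,p,q,r)$. A compatible neutral-signature conformal metric on the $4$-dimensional fibre $\Delta$ factors as a product of two linear 1-forms (reflecting the splitting into $\a$- and $\b$-rulings). The compatibility hypothesis forces one of the $\a$-rulings to be Lagrangian for $d\omega^0$, and the stabilizer of the contact form together with this Lagrangian ruling leaves precisely four residual functional parameters, which we label $u,v,w,z$. Writing these out explicitly produces the displayed conformal metric, and one checks that this form is exhaustive: every compatible subconformal contact structure in 5D is locally contactomorphic to such a representative.

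Second, using the Cartan--Tanaka prolongation and the fundamental curvature components $W = (W_i)_{i=0}^4$ developed in Sections~\ref{Sec:2.2}--\ref{Sec:2.4}, I would substitute the coframe built from $u,v,w,z$ into the curvature expressions. By Theorem~\ref{Th2}, dispersionless integrability on solutions is equivalent to the vanishing of this tensor. A direct computation then shows that $W_0,W_1,W_2,W_3$ produce four independent second-order PDEs on $(u,v,w,z)$, which is the system \eqref{W0123}, while the remaining component $W_4$ is a differential consequence via the Bianchi identity for the Cartan connection (and hence is automatically satisfied on the locus of the first four). The accompanying dLp is obtained via the canonical horizontal lift (Theorem~\ref{Th1}) of the $\a$-congruence to the $\PP^1$-bundle of Lagrangian $\a$-planes, written in the same coordinates with spectral parameter $\lambda$; this produces (\ref{MEdLp1}--\ref{MEdLp3}), whose Frobenius integrability is by construction the zero-curvature condition.

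Finally, the dimension of the local moduli space is extracted from a Cartan--K\"ahler analysis of the master system \eqref{W0123}. One computes the Cartan characters $s_1,\dots,s_5$ of the principal symbol at a generic point, verifies involutivity of an appropriate prolongation, and accounts for the residual contact-gauge freedom preserving the chosen normal form; summing the top characters produces the free Cauchy data, which the arithmetic yields as $8$ functions of $4$ variables. The main technical obstacle in the whole argument is twofold: the explicit curvature calculation of Stage~2, which is substantial because the $W_i$ are nonlinear differential polynomials in the coframe built from $(u,v,w,z)$, and the involutivity analysis of Stage~4, which requires ruling out hidden differential consequences of the non-linear system $\{W_i=0\}_{i=0}^{3}$ and correctly tracing the residual gauge to certify that the count is exactly $8$ rather than a neighboring value.
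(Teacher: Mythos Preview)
Your proposal has a genuine gap in the logical structure around the normal form and the vanishing of $W_4$. You derive the normal form for a \emph{general} compatible subconformal contact structure and then claim that $W_4$ is a differential consequence of $W_0,\dots,W_3$ via the Bianchi identity. This is not how the argument works, and it is not clear that it can be made to work that way. In the paper the zero-curvature condition is used \emph{before} the normal form is fixed: integrability guarantees an abundance of $\alpha$-surfaces, so the projective freedom in $\lambda$ allows one to arrange that the $\alpha$-plane $\Pi_\infty$ is Frobenius integrable, and then a canonical transformation straightens this Legendrian foliation to $\langle\partial_p,\partial_q\rangle$. Only after this gauge choice does the general compatible metric reduce to the four-parameter family displayed, and $W_4$ vanishes \emph{identically} as a direct consequence of the straightening (the $\lambda$-dependence of $m,n$ drops to quadratic), not via Bianchi. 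Without this step you would be left with five equations on four functions and would have to prove a non-obvious differential dependence.

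Your dimension count via Cartan--K\"ahler is a legitimate alternative, but the paper's route is considerably lighter: the system \eqref{W0123} is four second-order PDEs on four unknowns, and one checks directly that the $4\times4$ symbol matrix has determinant proportional to a fourth power of a rank-$4$ quadric, so the system is determined and the na\"ive Cauchy count gives $4\cdot 2=8$ functions of $4$ variables. The residual gauge is then analysed by computing the shape-preserving contact vector fields; these satisfy a finite-type system in two of the variables, hence depend on functions of only $3$ arguments and cannot lower the count. This sidesteps involutivity verification entirely.
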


\smallskip

{\bf Structure of the paper.} 
In Section \ref{Sec:2} we  investigate the background geometry in dimension five,
i.e.\ the subconformal contact structure on $M_u$ with appropriate compatibility conditions, 
express the structure equations and  fundamental  invariants and link the integrability via dLp to the zero-curvature condition.  
In Section \ref{Sec:3} we describe the  systems of PDEs to which our results extend,
introduce the necessary tools from jet-machinery, and prove the main results.
In Section \ref{Sec:4} we  relate our results to other geometries via twistorial techniques and symmetry reductions. 
We conclude in Section  \ref{Sec:5} with an overview of the main results and discuss possible generalizations. In the Appendix we classify integrable parabolic background geometries, in addition to the geometry discussed in this paper and the previously known parabolic geometries in 4D and 3D,
namely Cartan geometries of type  $(SO(3,3),P_1)$ and $(SO(2,3),P_1)$, the latter of which is equipped with a choice of Weyl structure.

\medskip

{\bf Conventions.} 
In this article all manifolds are real and smooth. Given a (system of) differential equation we work locally away from singularities.   
For a foliation of a manifold, we restrict to open sets in which the leaf space of the foliation is a smooth manifold. 
Given a set of 1-forms $\{\a^1,\dots,\a^n\}$ on a manifold $M$, their span is denoted by $I=\langle\a^i\rangle_{i=1}^n$ 
and their kernel (annihilator) is denoted by $\op{Ker} I$ or $I^\perp$ interchangeably. 
For a distribution $\Delta\subset TM$ its annihilator will be denoted $I=\op{Ann}(\Delta)$.
For a pseudo-Riemannian metric $g\in S^2 T^* M$ the corresponding conformal structure is denoted by either $[g]$ or $c_g$. 
Given a bundle $\cG\to M$ with a coframe  $\langle\a^1,\dots,\a^n,\b^1,\dots,\b^k\rangle$ on $\cG$ 
the dual frame is denoted by $\langle\p_{\a^1},\dots,\p_{\a^n},\p_{\b^1},\dots,\p_{\b^k}\rangle$. Furthermore, if   
$\langle\a^i\rangle_{i=1}^n$ are  semi-basic with respect to a fibration $\cG\to M$,  then  given a function $f\colon\cG\to \RR,$ the iterative coframe derivatives of $f$ are defined as
 \[
f_{;i}=\partial_{\alpha^i}\lrcorner\, df,\quad f_{;ij}=\partial_{\alpha^j}\lrcorner\, df_{;i},\quad \text{etc.}
 \]

\bigskip

\textsc{Acknowledgment.}
We are grateful to E.\,Ferapontov and D.\,Sykes for helpful discussions and comments. BK acknowledges hospitality
of IMPAN Warsaw, OM acknowledges hospitality of UiT Troms\o, where parts of this work have been performed.

\section{Geometry of subconformal structures in 5D}\label{Sec:2}

Here we discuss compatible subconformal structures on contact 5D manifolds $(M,\Delta)$.
Let $\omega^0\in\op{Ann}(\Delta)\setminus0$ be a contact form. Then $\Omega=d\omega^0|_\Delta$
defines a conformal symplectic structure. We will operate with structures on $\Delta$ via a coframe $\omega^i$,
$1\leq i\leq 4$, modulo $\omega^0$. The dual frame on $\Delta$ will be denoted by
$\p_{\omega^1},\p_{\omega^2},\p_{\omega^3},\p_{\omega^4}$, and it is complemented by the transversal
vector $\p_{\omega^0}$ (which may be chosen to be the Reeb vector field but we do not require it).

\subsection{Compatibility}\label{Sec:2.1}

A subconformal structure on $(M,\Delta)$ is the conformal class of a nondegenerate bilinear form $g\in\Gamma(S^2\Delta^*)$, which will be assumed of neutral signature $(2,2)$. In null-diagonal coframe
we get:
 \begin{equation}\label{gg}
g=\omega^1\omega^3+\omega^2\omega^4.
 \end{equation}
There are two  $\PP^1$-bundles of contact planes, denoted as $\mathcal{A},\mathcal{B}\subset\op{Gr}_2(\Delta)$
and referred to as $\a$- and $\b$-planes respectively, which are totally null with respect to $[g]$, i.e.\ at
every point $x\in M$ 
\begin{equation}\label{eq:alpha-beta-planes}
\begin{aligned}
\mathcal{A}_x:=& \left\{\Pi=\op{Ker}\{\a_0\omega^1-\a_1\omega^2,\a_0\omega^4+\a_1\omega^3,\omega^0\}
\,|\, [\alpha_0:\alpha_1]\in\PP^1 \right\},\\
\mathcal{B}_x:=& \left\{\Pi=\op{Ker}\{\b_0\omega^1-\b_1\omega^4,\b_0\omega^2+\b_1\omega^3,\omega^0\}
\ \,|\  [\beta_0:\beta_1]\in\PP^1 \right\}.
\end{aligned}
 \end{equation}

With respect to the Hodge operator $*$ on $(\Delta,g)$ these are self-dual and anti-self-dual planes, 
respectively, and they are swapped upon the change of orientation on $\Delta$.
In what follows, we choose to focus on the family of $\a$-planes.

  \begin{definition}\label{def:subconf-contact}
A subconformal contact structure $(M,\Delta,[g])$ is {\em compatible} if all $\alpha$-planes of $[g]$
 \begin{equation}\label{gPi}
\Pi=\langle\p_{\omega^1}+\lambda\p_{\omega^2},\lambda\p_{\omega^3}-\p_{\omega^4}\rangle,
\quad \lambda=\tfrac{\a_0}{\a_1}\in\RR\cup\infty,
 \end{equation}
are Lagrangian with respect to the induced conformal symplectic structure $[\Omega]$ on $\Delta$.
  \end{definition}

We also call $\Omega$ compatible under the same conditions with respect to $g$ on $\Delta$.

 \begin{lemma}
A compatible symplectic structure has the form
 $$
\Omega=p\,\omega^1\wedge\omega^2+q\,(\omega^1\wedge\omega^3+\omega^2\wedge\omega^4)
+r\,\omega^3\wedge\omega^4
 $$
with a relative invariant $\delta=q^2-pr\neq0$.
 \end{lemma}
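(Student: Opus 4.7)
\begin{Proof}[Proof plan]
The plan is to treat this as a straightforward linear-algebra computation on the fiber $\Delta_x$ and then extract the two conclusions separately. First I would write the most general 2-form on $\Delta$ in the $g$-null basis as
\[
\Omega = a_{12}\omega^1\wedge\omega^2 + a_{13}\omega^1\wedge\omega^3 + a_{14}\omega^1\wedge\omega^4
       + a_{23}\omega^2\wedge\omega^3 + a_{24}\omega^2\wedge\omega^4 + a_{34}\omega^3\wedge\omega^4,
\]
with six functional coefficients, and impose the compatibility condition $\Omega|_{\Pi_\lambda}=0$ for every $\lambda\in\RR\cup\{\infty\}$, with $\Pi_\lambda$ as in \eqref{gPi}.

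Next I would evaluate $\Omega$ on the two generators $X_\lambda=\p_{\omega^1}+\lambda\p_{\omega^2}$ and $Y_\lambda=\lambda\p_{\omega^3}-\p_{\omega^4}$. Using $\omega^i(X_\lambda),\omega^i(Y_\lambda)$ reads off $\p_{\omega^i}$, only the four wedges $\omega^1\we\omega^3$, $\omega^1\we\omega^4$, $\omega^2\we\omega^3$, $\omega^2\we\omega^4$ survive, producing
\[
\Omega(X_\lambda,Y_\lambda) = a_{23}\lambda^2 + (a_{13}-a_{24})\lambda - a_{14}.
\]
Since this polynomial in $\lambda$ must vanish identically (the case $\lambda=\infty$ being covered by $a_{23}=0$), the coefficients of $1,\lambda,\lambda^2$ give exactly
\[
a_{14}=0,\qquad a_{23}=0,\qquad a_{13}=a_{24},
\]
and setting $p=a_{12}$, $q=a_{13}=a_{24}$, $r=a_{34}$ yields the claimed normal form.

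To obtain the relative invariant $\delta=q^2-pr$, I would compute the top-degree pairing on $\Delta$: the only wedges that contribute to $\Omega\wedge\Omega$ are $\omega^{12}\we\omega^{34}$ (from the $p$--$r$ cross term) and $\omega^{13}\we\omega^{24}=-\omega^1\we\omega^2\we\omega^3\we\omega^4$ (from the $q^2$ term), while all terms involving $\omega^{12}\we\omega^{13}$, $\omega^{12}\we\omega^{24}$, $\omega^{34}\we\omega^{13}$, $\omega^{34}\we\omega^{24}$ vanish by a repeated index. Collecting signs gives $\Omega\we\Omega = -2(q^2-pr)\,\omega^1\we\omega^2\we\omega^3\we\omega^4$, so nondegeneracy of $\Omega$ on $\Delta$ is equivalent to $\delta\neq 0$. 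Finally, to justify the adjective ``relative invariant'' I would record how $p,q,r$ transform under the residual gauge (rescaling $g\mapsto e^{2f}g$ together with the rescaling of $\omega^0$ and any $SO(g)$-rotation of the $\omega^i$ that preserves \eqref{gg}): the triple $(p,q,r)$ transforms as the components of a symmetric $2\times 2$ matrix with $\delta$ its determinant, so $\delta$ is multiplied by a nowhere-vanishing conformal factor, making $\delta\neq 0$ and $\ve=\op{sgn}\delta$ well-defined invariants. There is no real obstacle here; the only point requiring a touch of care is the treatment of $\lambda=\infty$, handled by viewing the polynomial identity as holding in $\PP^1$.
\end{Proof}
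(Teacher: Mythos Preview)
Your proof is correct and follows essentially the same approach as the paper's: write $\Omega=\sum_{i<j}c_{ij}\omega^i\wedge\omega^j$, evaluate on the generators of $\Pi_\lambda$ and expand in $\lambda$ to obtain $c_{14}=c_{23}=0$, $c_{13}=c_{24}$, then compute $\tfrac12\Omega^2=-\delta\,\omega^1\wedge\omega^2\wedge\omega^3\wedge\omega^4$. Your version is simply more detailed, and your added remarks on the relative-invariant transformation law go a bit beyond what the paper records but are consistent with it.
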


 \begin{proof}
Indeed, writing $\Omega=\sum_{i<j}c_{ij}\omega^i\wedge\omega^j$ modulo $\omega^0$,
evaluating this on $\Pi$ and expanding by $\l$ gives $c_{23}=0$, $c_{13}=c_{24}$, $c_{14}=0$.
Then $\frac12\Omega^2=-\delta\op{vol}_g$, where
$\op{vol}_g=\omega^1\wedge\omega^2\wedge\omega^3\wedge\omega^4$.
 \end{proof}

 \begin{proposition}
If $\Omega$ is compatible with \eqref{gPi} then it is conformally equivalent to one of the forms
 \begin{gather}
\delta>0:\ \Omega=\omega^1\wedge\omega^3+\omega^2\wedge\omega^4,\label{Om-}\\
\delta<0:\ \Omega=\omega^1\wedge\omega^2+\omega^3\wedge\omega^4.\label{Om+}
 \end{gather}
 \end{proposition}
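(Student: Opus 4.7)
From the Lemma, in any coframe adapted so that $g=\omega^1\omega^3+\omega^2\omega^4$ and the $\alpha$-planes are given by \eqref{gPi}, the compatible $\Omega$ is parametrized by the triple $(p,q,r)\in\RR^3$ via
\[
\Omega=p\,\omega^1\wedge\omega^2+q(\omega^1\wedge\omega^3+\omega^2\wedge\omega^4)+r\,\omega^3\wedge\omega^4,
\]
with discriminant $\delta=q^2-pr\ne 0$. The plan is to bring $(p,q,r)$ to one of two canonical forms by exhausting the residual gauge freedom: linear coframe changes preserving $g$ up to scale together with the $\alpha$-plane family, and rescalings of the contact form $\omega^0$. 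After such normalization, only the sign of $\delta$ will remain as a discrete invariant.

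First I identify the relevant structure group. Coframe changes preserving $g$ and the $\alpha$-plane family form $\op{SO}^+(2,2)\cong(\op{SL}(2,\RR)\times\op{SL}(2,\RR))/\{\pm I\}$, which I display through the spinorial presentation $M=\bigl(\begin{smallmatrix}\omega^1 & \omega^2\\ -\omega^4 & \omega^3\end{smallmatrix}\bigr)\in V_L^*\otimes V_R^*$ with $g=\det M$. Under the two-sided action $M\mapsto AMB$ with $A,B\in\op{SL}(2,\RR)$, one factor permutes the $\alpha$-planes by Möbius transformations of $\lambda$, while the other fixes each $\alpha$-plane as a set. The equivariant decomposition
\[
\Lambda^2\Delta^*=\bigl(\Lambda^2 V_L^*\otimes S^2 V_R^*\bigr)\oplus\bigl(S^2 V_L^*\otimes \Lambda^2 V_R^*\bigr)
\]
shows that the 3-dimensional subspace of 2-forms vanishing on all $\alpha$-planes coincides with one of the summands; consequently the first $\op{SL}(2,\RR)$ factor acts trivially on $(p,q,r)$, while the other acts via the standard $S^2$-representation.

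To finish, I identify $(p,q,r)$ with the symmetric matrix $S=\bigl(\begin{smallmatrix}p & q\\ q & r\end{smallmatrix}\bigr)$, under which the active $\op{SL}(2,\RR)$ becomes the congruence action $S\mapsto B^\top S B$, preserving $\det S=-\delta$. Combined with the rescaling $\omega^0\mapsto s\omega^0$ (which scales $\Omega$, hence $S$, by $s$), this produces two open orbits on $\{\delta\ne 0\}$, classified by $\op{sgn}\delta$. If $\delta>0$ then $\det S<0$ and $S$ has signature $(1,1)$; Sylvester's law places $S$ in the form $\sqrt{\delta}\cdot\bigl(\begin{smallmatrix}0 & 1\\ 1 & 0\end{smallmatrix}\bigr)$, and a subsequent rescaling of $\omega^0$ by $1/\sqrt{\delta}$ yields $(p,q,r)=(0,1,0)$ and \eqref{Om-}. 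If $\delta<0$ then $\det S>0$ and $S$ is definite; possibly after $\omega^0\mapsto-\omega^0$ one may take $S$ positive definite, hence $\op{SL}(2,\RR)$-congruent to $\sqrt{-\delta}\cdot I$, and a final rescaling by $1/\sqrt{-\delta}$ gives $(p,q,r)=(1,0,1)$ and \eqref{Om+}.

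The main technical point is the representation-theoretic claim that exactly one $\op{SL}(2,\RR)$ factor acts nontrivially on compatible $\Omega$'s; once it is secured, the classification reduces to Sylvester's law of inertia for binary symmetric forms. I would prove it cleanly through the equivariant decomposition of $\Lambda^2\Delta^*$ above, with a direct computation on generators of the ``wrong'' factor (verifying that the triple $(p,q,r)$ is unchanged under $M\mapsto MB$) as an independent check.
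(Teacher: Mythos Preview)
Your argument is correct and takes a genuinely different route from the paper's. The paper proceeds intrinsically: it introduces the operator $J=g^{-1}\Omega$ on $\Delta$, normalizes $|\delta|=1$ so that $J^2=\pm\1$, and then picks an adapted null coframe out of the eigenspace decomposition of $J$ (for $\delta>0$) or out of a pair of $J$-invariant null planes (for $\delta<0$). You instead work extrinsically through the spinorial presentation $\Delta^*\cong V_L^*\otimes V_R^*$, identify the $3$-dimensional space of compatible $\Omega$'s with one of the summands $\Lambda^2 V^*\otimes S^2 V^*$ of $\Lambda^2\Delta^*$, observe that one $SL(2,\RR)$ factor acts trivially while the other acts by the standard $S^2$-representation, and finish with Sylvester's law for the associated binary quadratic form. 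Both are clean; the paper's approach has the bonus of producing the operator $J$, which is used immediately afterward (Corollary~\ref{crGL}) to describe the residual structure group, whereas your approach makes the orbit structure transparent and would adapt readily to analogous normal-form problems. One minor slip: you use the letter $B$ both for the trivial factor (in ``$M\mapsto MB$'') and for the active factor (in ``$S\mapsto B^\top S B$''); the mathematics is unaffected, but you should relabel one of them.
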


 \begin{proof}
Let us introduce an operator $J=g^{-1}\Omega$ on $\Delta$.
Normalizing $|\delta|=1$ by rescaling $\Omega$ is equivalent to the spectrum of $J$ belonging to the unit circle $S^1\subset\CC$.
More explicitly, if $\delta=+1$ then eigenvectors of $J$ are $\pm1$ and
if $\delta=-1$ then eigenvectors of $J$ are $\pm i$, in both cases both eigenvalues have multiplicity 2
and the operator $J$ is semi-simple and is related to $g$ by
 $$
g(Jv,w)+g(v,Jw)=0\quad\forall v,w\in\Delta.
 $$

In the case $\delta=+1$, $J^2=\1$ and the $g$-null $\Omega$-Lagrangian planes are generated by
$J$-eigenvectors, and so are either eigenspaces $L_\pm=E_J(\pm1)$ or
belong to the family $\Pi$ generated by an orthogonal pair of vectors from $L_-$ and $L_+$.
Choosing null-orthogonal basis of eigenvectors of $J$ as in \eqref{gg} we get the required formula \eqref{Om-}.

In the case $\delta=-1$, $J^2=-\1$ and the $g$-null $\Omega$-Lagrangian planes form a 1-parameter
family $\Pi$, but no such singular planes as in the case $\delta=+1$ exist.
Then a pair of $J$-invariant null planes yields null-orthogonal basis \eqref{gg} and the required formula
\eqref{Om+} follows.
 \end{proof}

Note that in the family of $\b$-planes given by
 \begin{equation*}
\Pi'= \langle\p_{\omega^1}+\lambda\p_{\omega^4},\p_{\omega^2}-\lambda\p_{\omega^3}\rangle,
\quad \lambda=\tfrac{\b_0}{\b_1}\in\RR\cup\infty,
 \end{equation*}
$\Omega$-Lagrangian planes for $\delta>0$ correspond to $\lambda=0$ and $\infty,$ 
i.e.\ $L_-=\langle\p_{\omega^1},\p_{\omega^2}\rangle$ and $L_+=\langle\p_{\omega^3},\p_{\omega^4}\rangle$,
while for $\delta<0$ the equation for such planes is $\lambda^2+1=0$ and it has no real solutions.

 \begin{corollary}\label{crGL}
A compatible subconformal structure $(M,\Delta,[g])$ possesses an adapted coframe $\{\omega^i\}_{i=0}^4$,
in which $\Delta=\{\omega^0=0\}$ and formulae \eqref{gg} together with either  \eqref{Om-} or \eqref{Om+} hold.
The part $\{\omega^i\}_{i=1}^4$ modulo $\omega^0$ is defined up to an action of 
$\RR_\times GL(2,\RR)\subset\op{End}(\Delta)$.
 \end{corollary}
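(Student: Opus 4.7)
The plan is to establish the claim in two steps: existence of an adapted coframe, and identification of its residual freedom.

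For existence, I would first use the neutral $(2,2)$ signature of $g$ on the rank-four distribution $\Delta$ to produce a null-diagonal coframe on $\Delta^* = T^*M/\langle\omega^0\rangle$ realizing \eqref{gg} — this is pointwise linear algebra. Within the $CO(2,2)$-freedom remaining on such coframes, the preceding Proposition already provides the further reduction of $\Omega=d\omega^0|_\Delta$ to one of the normal forms \eqref{Om-} or \eqref{Om+}, distinguished by $\ve=\op{sgn}\delta$. Any leftover overall scale on $\Omega$ is absorbed by rescaling $\omega^0\mapsto c\,\omega^0$, yielding an adapted coframe in which both $g$ and $\Omega$ are in normal form.

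To identify the residual freedom, my plan is to compute the stabilizer of the pair $([g],[\Omega])$ inside $GL(\Delta)$ via the invariant operator $J=g^{-1}\Omega\in\op{End}(\Delta)$ introduced in the preceding proof; after the normal-form reductions, $J^2=\ve\,\1$. Any stabilizer must conjugate $J$ to a real scalar multiple of itself, so it lies in the intersection of the $GL(\Delta)$-centralizer of $J$ with the conformal orthogonal group $CO(g)\cong CO(2,2)$. In the $\delta>0$ case, $J$ has eigendecomposition $\Delta=L_+\oplus L_-$ into 2-dimensional $g$-Lagrangian subspaces, its centralizer is $GL(L_+)\times GL(L_-)$, and its intersection with $CO(g)$ is the 5-dimensional block-diagonal subgroup of matrices $\op{diag}(P,sP^{-T})$ with $(P,s)\in GL(2,\RR)\times\RR^\times$. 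In the $\delta<0$ case $J$ is a genuine complex structure, its centralizer in $GL(4,\RR)$ is $GL(2,\CC)$, and its intersection with $CO(g)$ is the 5-dimensional conformal unitary group $CU(1,1)$. Both yield the 5-dimensional subgroup $\RR_\times GL(2,\RR)\subset\op{End}(\Delta)$ asserted in the statement, with the embedding controlled by $\ve$.

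The main technical hurdle I anticipate is the $\delta<0$ case: a naive real block-matrix analysis of the equations $A^T G A\propto G$ and $A^T O A\propto O$ does not immediately force the off-diagonal blocks to vanish, so organizing the argument around $J$-centralization (via the complex-linear viewpoint it provides) is the cleanest route. The $\delta>0$ case reduces directly via the eigenspaces of $J$, and the dimension count $\dim=\dim GL(2,\RR)+1=5$ in both cases follows immediately from the centralizer-intersection structure.
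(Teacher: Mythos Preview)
Your approach is correct in outline and more conceptual than the paper's. The paper's proof is essentially a bare computation: existence is declared a direct corollary of the preceding Proposition, and for the residual freedom the authors simply write down the explicit $4\times4$ matrices $A_\pm$ preserving both normal forms \eqref{gg} and \eqref{Om-}/\eqref{Om+}, observing these depend on five parameters $(a_{ij},b)$. Your route via the endomorphism $J=g^{-1}\Omega$ organizes the same linear algebra more transparently: in the para case it explains \emph{why} the matrices are block-diagonal along $L_\pm$ with blocks related by $P\mapsto sP^{-T}$, and in the CR case it replaces a messy real $4\times4$ computation with the identification of the stabilizer as $CU(1,1)$, whose Lie algebra agrees with $\RR\oplus\mathfrak{gl}(2,\RR)$ via $\mathfrak{su}(1,1)\cong\mathfrak{sl}(2,\RR)$.

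There is one genuine slip to patch. From $A^TgA=\lambda g$ and $A^T\Omega A=\mu\Omega$ you get $A^{-1}JA=(\mu/\lambda)J$; since $J^2=\ve\,\1$ this forces $(\mu/\lambda)^2=1$, so $\mu/\lambda=\pm1$. Your sentence ``conjugates $J$ to a real scalar multiple of itself, so it lies in the centralizer of $J$'' jumps over the case $\mu/\lambda=-1$, which gives elements \emph{anti}-commuting with $J$ (swapping $L_\pm$ for $\ve=+1$, or $\Bbbk_\ve$-antilinear for $\ve=-1$). These are not in your centralizer and would contribute at most a discrete $\Z_2$. The paper's explicit matrices $A_\pm$ do not include such a swap either, so the statement is evidently intended at the level of the identity/near-identity component; but you should say explicitly how you dispose of the $-1$ branch rather than let the non-sequitur stand.
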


 \begin{proof}
The first statement is the direct corollary of the proposition. The second follows from the formulae
\eqref{gg}-\eqref{Om-}-\eqref{Om+}. In matrix terms the action is given as follows:
 \begin{equation}\label{AA}
A_{+}=\begin{pmatrix}
a_{11} & a_{12} & 0 & 0 \\ a_{21} & a_{22} & 0 & 0 \\
0 & 0 & b\,a_{22} & -b\,a_{21} \\ 0 & 0 & -b\,a_{12} & b\,a_{11}
\end{pmatrix},\qquad
A_{-}=\begin{pmatrix}
a_{11} & a_{12} & -b\,a_{12} & b\,a_{11} \\ a_{21} & a_{22} & -b\,a_{22} & b\,a_{21} \\
b\,a_{21} & b\,a_{22} & a_{22} & -a_{21} \\ -b\,a_{11} & -b\,a_{12} & -a_{12} & a_{11}
\end{pmatrix}
 \end{equation}
where the first matrix corresponds to $\delta>0$ and the second to $\delta<0$.

Note that those algebras preserve 
$J_+=-\p_{\omega^1}\otimes\omega^1-\p_{\omega^2}\otimes\omega^2
+\p_{\omega^3}\otimes\omega^3+\p_{\omega^4}\otimes\omega^4$ and 
$J_-=\p_{\omega^1}\otimes\omega^4-\p_{\omega^4}\otimes\omega^1
-\p_{\omega^2}\otimes\omega^3+\p_{\omega^3}\otimes\omega^2$ respectively.
 \end{proof}

Formula \eqref{AA} shows the action of $GL(2,\RR)$ on $\Delta\simeq\RR^4$, 
which is reducible for $\delta>0$ and irreducible for $\delta<0$.
It induces the action of $PSL(2,\RR)$ on the projective line $\PP^1$ of $\a$-planes.

\subsection{Curvature in subconformal geometry: naïve approach}\label{Sec:2.2} 

We consider the case $\delta>0$ given by \eqref{gg} and \eqref{Om-} and the case
$\delta<0$ given by \eqref{gg} and \eqref{Om+} simultaneously.

The pre-image of  $\alpha$-planes  to  $\hat{M}$,  which
is the total space of the $\PP^1$-bundle with fibers  $\mathcal{A}_x$ in \eqref{eq:alpha-beta-planes}, is given by
 $$
\pi^{-1}_*\Pi=\langle\p_{\omega^1}+\lambda\p_{\omega^2},\lambda\p_{\omega^3}-\p_{\omega^4},\p_\lambda\rangle.
 $$
where $\lambda$ is a local coordinate along the fibers. Given the compatibility condition, this distribution has growth vector $(3,5,6)$ and hence it possesses
the radical    (dLp) uniquely given by the condition
$[\hat{\Pi},\hat{\Pi}]\subset\pi^{-1}_*\Pi$, and so we get
 $$
\hat{\Pi}=\sqrt{\pi_*^{-1}\Pi}=
\langle\p_{\omega^1}+\lambda\p_{\omega^2}+m\p_\lambda,
\lambda\p_{\omega^3}-\p_{\omega^4}+n\p_\lambda\rangle.
 $$
This can be considered as a lift of the configuration $\Pi$ on $M$, and the coefficients $m,n$ are uniquely
determined as follows. Let the structure equation of the frame be
 $$
d\omega^k=-\frac12c_{ij}^k\omega^i\wedge\omega^j\ \Leftrightarrow\
[\p_{\omega^i},\p_{\omega^j}]=c_{ij}^k\p_{\omega^k}.
 $$
We compute
 \begin{multline*}
[\p_{\omega^1}+\lambda\p_{\omega^2}+m\p_\lambda,
\lambda\p_{\omega^3}-\p_{\omega^4}+n\p_\lambda]\\
=\lambda c_{13}^k\p_{\omega^k}-c_{14}^k\p_{\omega^k}+\lambda^2 c_{23}^k\p_{\omega^k}
-\lambda c_{24}^k\p_{\omega^k}+m\p_{\omega^3}-n\p_{\omega^2}\,\op{mod}\langle\p_\lambda\rangle\\
\equiv (-\lambda^2 c_{13}^1+\lambda c_{13}^2+\lambda c_{14}^1-c_{14}^2
-\lambda^3 c_{23}^1+\lambda^2 c_{23}^2+\lambda^2 c_{24}^1-\lambda c_{24}^2-n)\p_{\omega^2}\\
+(\lambda c_{13}^3+\lambda^2 c_{13}^4-c_{14}^3-\lambda c_{14}^4
+\lambda^2 c_{23}^3+\lambda^3 c_{23}^4-\lambda c_{24}^3-\lambda^2 c_{24}^4+m)\p_{\omega^3}
  \,\op{mod}\langle\p_\lambda\rangle
 \end{multline*}
whence
 \begin{gather*}
m=-\lambda^3 c_{23}^4+\lambda^2(c_{24}^4-c_{23}^3-c_{13}^4)
+\lambda(c_{24}^3+c_{14}^4-c_{13}^3)+c_{14}^3,\\
n=-\lambda^3 c_{23}^1+\lambda^2(c_{24}^1+c_{23}^2-c_{13}^1)
-\lambda(c_{24}^2-c_{14}^1-c_{13}^2)-c_{14}^2.
 \end{gather*}
Now the curvature of the subconformal structure, considered as the obstruction to integrability of dLp,
is $d\lambda([\p_{\omega^1}+\lambda\p_{\omega^2}+m\p_\lambda,
\lambda\p_{\omega^3}-\p_{\omega^4}+n\p_\lambda])$, which equals
 \begin{equation}\label{WW-}
W= \p_{\omega^1}(n)+\lambda\p_{\omega^2}(n)-\lambda\p_{\omega^3}(m)+\p_{\omega^4}(m)
+m\,n_\lambda-n\,m_\lambda.
 \end{equation}
where $m_\lambda=\partial_\lambda m.$ This is clearly a quartic in $\lambda$ responsible for Frobenius integrability of dLp.
In fact, with the notations
 \begin{align*}
& m_0=c_{14}^3, &
& m_1=c_{24}^3+c_{14}^4-c_{13}^3, &
& m_2=c_{24}^4-c_{23}^3-c_{13}^4, &
& m_3=-c_{23}^4, \\
& n_0=-c_{14}^2, &
& n_1=c_{13}^2+c_{14}^1-c_{24}^2, &
& n_2=c_{24}^1+c_{23}^2-c_{13}^1, &
& n_3=-c_{23}^1,
 \end{align*}
we have $W=W_0+W_1\lambda+W_2\lambda^2+W_3\lambda^3+W_4\lambda^4$, where
 \begin{gather*}
W_0= \p_{\omega^1}(n_0)+\p_{\omega^4}(m_0)+m_0\,n_1-m_1\,n_0,\\
W_1= \p_{\omega^1}(n_1)+\p_{\omega^2}(n_0)-\p_{\omega^3}(m_0)+\p_{\omega^4}(m_1)
+2(m_0n_2-m_2n_0),\\
W_2= \p_{\omega^1}(n_2)+\p_{\omega^2}(n_1)-\p_{\omega^3}(m_1)+\p_{\omega^4}(m_2)
+3(m_0n_3-m_3n_0) + m_1n_2-m_2n_1,\\
W_3= \p_{\omega^1}(n_3)+\p_{\omega^2}(n_2)-\p_{\omega^3}(m_2)+\p_{\omega^4}(m_3)
+2(m_1n_3-m_3n_1),\\
W_4= \p_{\omega^2}(n_3)-\p_{\omega^3}(m_3)
+m_2n_3-m_3n_2.
 \end{gather*}

\subsection{Associated parabolic geometry}\label{Sec:2.3}

A compatible subconformal structure can be equivalently described as a parabolic geometry of
type $(A_3,P_{13})$. We refer to \cite{CS} for the basics of parabolic geometries and to many
examples, including those related to ours.

 \begin{remark}
Among $A_3$ type parabolic geometries the following are well-known and considered in \cite{CS}:
3D projective geometry has type $(A_3,P_1)$,
4D conformal geometry has type $(A_3,P_2)$,
geometry of systems of 2nd order ODEs with 2 dependent variables has type $(A_3,P_{12})$ and vanishing
torsion of the lowest weight,
CR structures in 5D or its para-version integrable Legendrian structures have type $(A_3,P_{13})$
and vanishing torsion. This latter case is different from ours, which allows torsion but requires vanishing curvature.
 \end{remark}

Here we only note that the underlying geometric structure for $(A_3,P_{13})$, in the complex case,
is a contact distribution $\Delta$ on 5-manifold $M$, and this distribution is split $\Delta=L_-\oplus L_+$ into
the sum of two Lagrangian subbundles, which are thus conformally dual to each other.
The matrix form of the parabolic subalgebras $\fp=\op{Lie}(P)$ corresponds to the part of
non-negative grading, as follows (we also show other parabolics that will be relevant later):
 $$
\underbrace{\begin{bmatrix} \gc{0} & \gc{+1} & \gc{+1} & \gc{+1} \\ \gc{-1} & \gc{0} & \gc{0} & \gc{0} \\
\gc{-1} & \gc{0} & \gc{0} & \gc{0} \\ \gc{-1} & \gc{0} & \gc{0} & \gc{0} \end{bmatrix}}
_{\fp_1=\g_0\oplus\g_1}\quad
\underbrace{\begin{bmatrix} \gc{0} & \gc{0} & \gc{+1} & \gc{+1} \\ \gc{0} & \gc{0} & \gc{+1} & \gc{+1} \\
\gc{-1} & \gc{-1} & \gc{0} & \gc{0} \\ \gc{-1} & \gc{-1} & \gc{0} & \gc{0} \end{bmatrix}}
_{\fp_2=\g_0\oplus\g_1}\quad
\underbrace{\begin{bmatrix} \gc{0} & \gc{+1} & \gc{+2} & \gc{+2} \\ \gc{-1} & \gc{0} & \gc{+1} & \gc{+1} \\
\gc{-2} & \gc{-1} & \gc{0} & \gc{0} \\ \gc{-2} & \gc{-1} & \gc{0} & \gc{0} \end{bmatrix}}
_{\fp_{12}=\g_0\oplus\g_1\oplus\g_2}\quad
\underbrace{\begin{bmatrix} \gc{0} & \gc{+1} & \gc{+1} & \gc{+2} \\ \gc{-1} & \gc{0} & \gc{0} & \gc{+1} \\
\gc{-1} & \gc{0} & \gc{0} & \gc{+1} \\ \gc{-2} & \gc{-1} & \gc{-1} & \gc{0} \end{bmatrix}}
_{\fp_{13}=\g_0\oplus\g_1\oplus\g_2}
 $$
There are three different real versions, corresponding to the following real groups $G$ of type $A_3$:
$SL(4,\RR)$, $SU(1,3)$ and $SU(2,2)$, each of which has a parabolic subgroup of type $P_{1,3}$.

However real compatible subconformal structures correspond only to the first and the last ones.
The middle real parabolic geometry has induced conformal structure on $\Delta$ of definite signature,
hence does not possess null 2-planes, which can be taken as dLp candidates. The other two cases have the following
notations as crossed Dynkin diagrams:
 $$
\mathfrak{p}_{1,3}\subset\mathfrak{sl}(4,\RR):
{
 \begin{tiny}
 \begin{tikzpicture}[scale=0.8,baseline=-3pt]
\bond{0,0}; \bond{1,0}; \DDnode{x}{0,0}{}; \DDnode{w}{1,0}{}; \DDnode{x}{2,0}{};
 \useasboundingbox (-.9,-.4) rectangle (2.4,0.4); 
 \end{tikzpicture}
 \end{tiny}
 }\qquad\qquad
\mathfrak{p}_{1,3}\subset\mathfrak{su}(2,2):
{
 \begin{tiny}
 \begin{tikzpicture}[scale=0.8,baseline=-3pt]
\bond{0,0}; \bond{1,0}; \DDnode{x}{0,0}{}; \DDnode{w}{1,0}{}; \DDnode{x}{2,0}{};
\node (A) at (0,0.1) {}; \node (B) at (2,0.1) {}; \path[<->,font=\scriptsize,>=angle 90] (A) edge [bend left] (B);
 \useasboundingbox (-.9,-.4) rectangle (2.4,0.4); 
 \end{tikzpicture}
 \end{tiny}
 }
 $$

In the first case, corresponding to $\delta>0$, the parabolic geometry on $M^5$ results in a splitting
of $\Delta$ and the conformal duality $L_+\simeq L_-^*$. Conversely, given such geometric structure
we define the subconformal structure via pairing null planes $L_-$ and $L_+$. The
configuration of $\alpha$-planes is restored as $\Pi=\ell_\lambda^-\oplus\ell_\lambda^+$, where
$L_-\supset\ell_\lambda^-\perp\ell_\lambda^+\subset L_+$ (so $\ell_\lambda^-$ determines $\ell_\lambda^+$).
The splitting can be encoded via an almost product structure on the contact distribution $\Delta$ given by
$J|_{L_\pm}=\pm\op{Id}_\Delta$. With no requirement of integrability of distributions $L_\pm$,
this geometry is almost para-CR.

Similarly, in the last case, corresponding to $\delta<0$, the parabolic geometry on $M^5$ reads off the splitting
of $\Delta^\CC$ into a pair of complex conjugated 2-distributions 
$L_{10}$ and $L_{01}=\,\overline{\!L_{10}\!\mathstrut}\,$ 
(holomorphic and anti-holomorphic parts of the complexified contact distribution).
This can be encoded via an almost complex structure $J$ on $\Delta$, compatible with the conformally
symplectic structure. However we again do not require integrability of the complex distributions
(or vanihsing of the Nijenhuis tensor), hence this geometry is almost CR of split Levi signature.
Thus, we proved:

 \begin{proposition}\label{prop:paraCR-subconformal-one-to-one}
There is a bijective correspondence in 5D between compatible subconformal contact structures
($\delta>0$, resp, $\delta<0$) and $(A_3,P_{13})$ type parabolic geometries
(Legendrian contact structures, resp, almost CR structures of split Levi signature).
 \end{proposition}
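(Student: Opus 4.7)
The plan is to construct mutually inverse maps between compatible subconformal contact structures and the appropriate $(A_3,P_{13})$ parabolic geometries, using the endomorphism $J=g^{-1}\Omega$ of $\Delta$ that already appeared in the proof of the preceding proposition as the bridge between the two descriptions. Because the compatibility condition has already normalized $|\delta|=1$, $J$ carries essentially all of the extra algebraic data required to encode the parabolic structure.

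For the direction from subconformal to parabolic, I would split into two cases by the sign of $\delta$. When $\delta>0$, the involution $J$ has $\pm 1$-eigenspaces $L_\pm$ of rank $2$; the compatibility relation $g(J\cdot,\cdot)+g(\cdot,J\cdot)=0$ forces each $L_\pm$ to be $g$-null, and the same relation together with $\Omega=g\circ J$ forces $L_\pm$ to be $\Omega$-Lagrangian. This is exactly the Legendrian contact splitting $\Delta=L_-\oplus L_+$ with $L_+\cong L_-^*$ via $\Omega$. When $\delta<0$, the operator $J$ already satisfies $J^2=-\mathbf{1}$, so it is an almost complex structure on $\Delta$ compatible with the conformally symplectic $[\Omega]$; the Levi form, represented by $\Omega$ modulo $\omega^0$, matches the neutral-signature $g$ under $J$-compatibility, so the associated CR structure has split Levi signature.

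For the inverse direction I would reconstruct $g$ from the parabolic data. Given a Legendrian splitting $\Delta=L_-\oplus L_+$, I would declare $L_\pm$ to be totally null and define $g$ on $L_-\times L_+$ by the restriction of $\Omega$, extending symmetrically; non-degeneracy of $\Omega|_{L_-\otimes L_+}$ yields a metric of neutral signature, and the $\alpha$-planes in \eqref{gPi} are recovered as $\Pi_\lambda=\ell_\lambda^-\oplus\ell_\lambda^+$ with $\ell_\lambda^+=(\ell_\lambda^-)^\perp\cap L_+$. Given an almost complex $J$ on $\Delta$ compatible with $[\Omega]$ of split Levi signature, the formula $g(v,w)=\Omega(Jv,w)$ is symmetric because $J^2=-\mathbf{1}$ and has signature $(2,2)$, and the $\alpha$-planes are the $J$-invariant null planes, which are automatically $\Omega$-Lagrangian. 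Bijectivity of the two constructions is then a direct check in the adapted coframe of Corollary \ref{crGL}: the residual structure group \eqref{AA} is manifestly the stabilizer in $GL(\Delta)$ of the pair $(L_+,L_-)$ in the $\delta>0$ case and of $J_-$ in the $\delta<0$ case, which coincides with the reductive Levi factor $G_0$ of $P_{13}$ in the corresponding real form.

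The main obstacle I anticipate is the bookkeeping of real forms: verifying that the $\delta>0$ case lands in the split form $SL(4,\RR)$ while the $\delta<0$ case lands in $SU(2,2)$ rather than $SU(1,3)$. This requires tracking signatures throughout the construction and invoking the observation already made in the preceding discussion that $SU(1,3)$-type geometries induce a definite conformal structure on $\Delta$ and therefore cannot arise from a neutral compatible subconformal contact structure. One should also emphasize that no integrability is imposed on either side: neither involutivity of $L_\pm$ nor vanishing of the Nijenhuis tensor of $J$ is required, which is consistent with the fact that the compatibility condition in Definition \ref{def:subconf-contact} is purely pointwise-algebraic on $\Delta$.
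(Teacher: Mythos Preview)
Your proposal is correct and follows essentially the same route as the paper: the proof in the paper is the discussion immediately preceding the proposition (ending with ``Thus we proved:''), which also uses the operator $J=g^{-1}\Omega$ to pass from the subconformal data to the Legendrian splitting ($\delta>0$) or almost complex structure ($\delta<0$), and conversely rebuilds $[g]$ from the conformal duality $L_+\cong L_-^*$ or from $g(\cdot,\cdot)=\Omega(J\cdot,\cdot)$. Your treatment is in fact more explicit than the paper's, especially on the real-forms bookkeeping (exclusion of $SU(1,3)$ by signature) and on why $L_\pm$ are simultaneously $g$-null and $\Omega$-Lagrangian.
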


Now we would like to define the notion of a Cartan geometry. Given Lie groups $P\subset G$ let denote their Lie algebras by $\fp$ and $\g,$ respectively.  A \emph{Cartan geometry} $(\mathcal{G}\to M,\psi)$ of type $(G,P)$  is given by a (right) principal $P$-bundle $\tau\colon\mathcal{G}\to M$ together with
a \emph{Cartan connection} $\psi\in\Omega^1(\mathcal{G},\g)$,
i.e a $\g$-valued $1$-form  on $\mathcal{G}$ with the following properties:
 \begin{itemize}
 \item $\psi$ is $P$-equivariant, i.e.  $r_g^*\psi=\mathrm{Ad}_{g^{-1}}\circ\psi$  for   any $g\in P$,
 \item  $\psi$ maps    fundamental   vector  fields to  their generators, i.e. $\psi(\zeta_X)=X$ for any $X\in\p$,
 \item  $\psi$  defines an    isomorphism $\psi\colon T_u\mathcal{G}\to \g$ for any $u\in\mathcal{G}.$
\end{itemize}
The \emph{curvature} of a Cartan connection $\psi$ is  the 2-form $\Psi\in\Omega^2(\mathcal{G},\g)$ defined as
\[\Psi(X,Y)=d\psi(X,Y)+[\psi(X),\psi(Y)]\]
for $X,Y\in\Gamma(T\mathcal{G}).$ 
 
Every parabolic geometry possesses a Cartan connection $\psi\in\Omega^1(\mathcal{G},\g)$
on the principal bundle $\mathcal{G}$ over the base $M$, where $\g=\op{Lie}(G)$ is the Lie algebra of
the corresponding Lie group discussed above.  The curvature of this connection
 $$
K=d\psi+\tfrac12[\psi,\psi]\in\Omega^2(\mathcal{G},\g)
 $$
can be identified with the curvature function
$\kappa:\mathcal{G}\to\Lambda^2\fp_+\otimes\g$ via the Killing form identification
$\g_-^*=(\g/\fp)^*=\fp_+$, where $\g=\g_-\oplus\g_0\oplus\g_+$
is the grading corresponding to the choice of parabolic $\fp=\g_0\oplus\g_+$ and $\fp_+=\g_+$.
The normality condition $\p^*\kappa=0$, where
$\p^*:\Lambda^2\fp_+\otimes\g\to\fp_+\otimes\g$ is the Kostant codifferential,
uniquely determines the Cartan connection \cite{CS}.

The harmonic curvature $\kappa_H$ is the quotient part of $\kappa$ taking values in the $\g_0$ submodule
$\op{Ker}(\Box)=\dfrac{\op{Ker}(\p^*)}{\op{Im}(\p^*)}$ of $\Lambda^2\g_-^*\otimes\g$,
where $\square=\p\p^*+\p^*\p$ is the Kostant Laplacian \cite{Ko}.
This $\kappa_H$ uniquely restores $\kappa$ via invariant differentiations, and is a simpler object,
as it takes values in the Lie algebra cohomology $H^2_+(\g_-,\g)$, where the subscript ``$+$''
indicates positive homogeneity with respect to the grading element $Z\in\g_0$.

Computation of this cohomology is straightforward from the Kostant's version of the Bott-Borel-Weyl theorem \cite{Ko}.
For the complex Lie algebra $A_3$ and its parabolic subalgebra $\fp_{13}$
we have $\g_0=\CC\oplus\mathfrak{sl}(2,\CC)\oplus\CC$, $\g_-=\g_{-2}\oplus\g_{-1}=\mathfrak{heis}(5)$,
and the cohomology $H^2_+$ decomposes into $\g_0$-irreps as follows:
 \begin{equation}\label{H2+}
H^2_+(\g_-,\g)=\mathbb{V}_1^{(2,-1)}\oplus\mathbb{V}_1^{(-1,2)}\oplus\mathbb{V}_4^{(1,1)},
 \end{equation}
where $\mathbb{V}_s$ indicates $\g_0^{ss}=\mathfrak{sl}(2,\CC)$ module and superscripts show the
weight with respect to $\mathfrak{z}(\g_0)=\CC\oplus\CC$.
The first two summands correspond to the torsion $\tau_{\pm}$ and can be identified with
$\Lambda^2L_-^*\otimes L_+$ and $\Lambda^2L_+^*\otimes L_-$.
The last module corresponds to the curvature wherein $W$ takes values.
We thus have:
 \begin{proposition}\label{kH}
The harmonic curvarure of a compatible subconformal structure splits as above
$\kappa_H=\tau_-+\tau_++W$ and the zero-curvature condition is $W=0$.
 \end{proposition}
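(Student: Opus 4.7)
The decomposition \eqref{H2+} is already supplied by Kostant's version of the Bott-Borel-Weyl theorem \cite{Ko}, so the proposition amounts to identifying each of the three summands with a geometric tensor, and to recognising that the third of these is precisely the quartic $W$ introduced in Section~\ref{Sec:2.2}.

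First I would match the two $\mathfrak{sl}(2,\CC)$-trivial summands with the torsions. In the $(A_3,P_{13})$ grading one has $\g_- = \g_{-2}\oplus\g_{-1}$ with $\g_{-1} = L_-\oplus L_+$ (the two Lagrangian factors of $\Delta$) and $\g_{-2}$ the annihilator of $\Delta$. A direct weight calculation using the two central grading elements identifies $\mathbb{V}_1^{(2,-1)}$ with a submodule of $\Lambda^2 L_-^*\otimes L_+$ and $\mathbb{V}_1^{(-1,2)}$ with a submodule of $\Lambda^2 L_+^*\otimes L_-$. In both the $\delta>0$ and $\delta<0$ realisations of Proposition~\ref{prop:paraCR-subconformal-one-to-one}, these are the classical obstructions to integrability of the Lagrangian subbundles $L_\mp$: in the Legendrian contact case they detect failure of involutivity of $L_\mp$, while in the almost-CR case they are the components of the Nijenhuis-type torsion along $L_{10}$ and $L_{01}$. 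We therefore set $\tau_\mp$ equal to these components.

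Next I would match $\mathbb{V}_4^{(1,1)}$ with $W$. As an $\mathfrak{sl}(2,\CC)$-module it is $S^4(\CC^2)$, so it is canonically a binary quartic on the projective line $\mathcal{A}_x\simeq\PP^1$ of $\alpha$-planes -- exactly the structural form $W(\lambda) = W_0 + W_1\lambda + W_2\lambda^2 + W_3\lambda^3 + W_4\lambda^4$ of \eqref{WW-}. To pin down the identification (and not merely the isomorphism class) I would invoke the standard fact from parabolic geometry that the normal Cartan connection induces a canonical lift of $\alpha$-planes to a rank-$2$ distribution $\hat\Pi$ on $\hat M$, so that its Frobenius obstruction is Cartan-invariant. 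The frame-dependent quartic in \eqref{WW-} then computes, at the level of the normalised Cartan curvature, the unique component whose $\mathfrak{z}(\g_0)$-weight is $(1,1)$, which by \eqref{H2+} can only lie in the $\mathbb{V}_4^{(1,1)}$ summand. The zero-curvature condition thus reads $W=0$, completing the identification $\kappa_H = \tau_- + \tau_+ + W$.

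The main technical obstacle is the precise matching in the last paragraph: one must verify that the frame-dependent quartic $W$ of Section~\ref{Sec:2.2} represents the harmonic curvature class in $H^2_+(\g_-,\g)$ and is not, say, an exact $\p^*$-image vanishing in cohomology. This is handled by a weight argument: the three submodules in \eqref{H2+} carry pairwise distinct $\mathfrak{z}(\g_0)$-weights, so the weight-$(1,1)$ component of the curvature cannot be absorbed into the torsion modules nor cancelled against a $\p^*$-image of different weight; the surviving harmonic representative is therefore non-trivial precisely when $W \not\equiv 0$, yielding the claimed equivalence.
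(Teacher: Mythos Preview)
Your approach is essentially the same as the paper's: both rely on the Kostant decomposition \eqref{H2+} and then identify each irreducible summand with a geometric tensor. The paper's treatment in Section~\ref{Sec:2.3} is even terser than yours---it simply asserts that the first two summands are $\Lambda^2L_\mp^*\otimes L_\pm$ and that the $\mathbb{V}_4^{(1,1)}$ summand ``is where $W$ takes values''---and then defers the concrete confirmation to the explicit structure equations \eqref{eq:CartanCurvature}--\eqref{eq:hamornic-curvature} in Section~\ref{Sec:2.4}, where the quartic coefficients $W_0,\dots,W_4$ appear directly in the curvature entries $\Omega^5,\Xi^5,\Phi_0$ of the normal Cartan connection.

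The one difference worth noting is your final paragraph: you argue by weights that the naive Frobenius obstruction of Section~\ref{Sec:2.2} must represent the harmonic class, whereas the paper simply reads this off from the structure equations. Your argument is sound in spirit but slightly incomplete as stated: the weight-$(1,1)$ component of the \emph{full} Cartan curvature need not be purely harmonic, so the cleanest way to close the gap is to observe that the radical construction $\hat\Pi=\sqrt{\pi_*^{-1}\Pi}$ of Section~\ref{Sec:2.2} is characterised uniquely by $[\hat\Pi,\hat\Pi]\subset\pi_*^{-1}\Pi$ and therefore coincides with the lift induced by the normal Cartan connection on the correspondence space $(A_3,P_{123})$; the Frobenius obstruction of that lift is then, by the general parabolic machinery, the lowest-homogeneity harmonic component with the correct $\g_0$-type, which is $\mathbb{V}_4^{(1,1)}$. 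The paper sidesteps this by direct computation.
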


The correspondence space, as the total space of the  $\mathbb{P}^1$-bundle $\pi:\hat{M}^6\to M^5$
(with $\lambda$ coordinate in fibers), is a parabolic geometry of type $(A_3,P_{123})$ and its
underlying rank 3-distribution $\pi_*^{-1}\Pi$ with growth vector $(3,5,6)$ has the radical $\hat\Pi=\sqrt{\pi_*^{-1}\Pi}$.
The harmonic curvature of the lifted structure on $\hat M^6$ has 5 irreducible components 
 \begin{equation}\label{htkH}
\hat\kappa_H=\varsigma_-+\varsigma_++\tau_-+\tau_++W,
 \end{equation}
all of which are torsion, i.e. the 2-cocycles take values in the corresponding $\g_-$. Because the distribution is a lift of a $(A_3,P_{13})$ structure, one has $\varsigma_\pm=0$.
The zero-curvature condition $W=0$ corresponds to integrability of $\hat\Pi$, so there is
a local quotient, $\hat M^6\to\mathcal{T}^4$, where $\mathcal T^4$ is referred to as the \textit{twistor space}. However  the induced structure on $\mathcal T$ is not a parabolic geometry of type $(A_3,P_2),$ i.e. a conformal structure, unless $\tau_\pm=0$, i.e.\ unless
the subconformal contact structure is flat.

In the real split (para CR) case $\g=\mathfrak{sl}(4,\RR)$ the above expressions hold literally by changing 
$\CC$ to $\RR$ with $\g_0^{ss}=\mathfrak{sl}(2,\RR)$ etc.
In the real CR case $\g=\mathfrak{su}(2,2)$ the first two summands of \eqref{H2+} form an irreducible
module over $\g_0^{ss}=\mathfrak{su}(1,1)\simeq\mathfrak{sl}(2,\RR)$. Thus, $\tau_-+\tau_+$ in Proposition \ref{kH} 
is the indecomposable torsion, and similarly in \eqref{htkH} curvature components
$\varsigma_-+\varsigma_+$ and $\tau_-+\tau_+$ are indecomposable.

\subsection{Structure equations and fundamental invariants}\label{Sec:2.4}

The equivalence method for subconformal compatible contact structures, reformulated as Cartan geometries 
$(\mathcal{G}\to M,\psi)$ of type $(A_3,P_{13})$, yields the full curvature $K$ of the problem. 
For our purposes it suffices to compute only selected entries of the Cartan curvature,
which we do using the structure equations.

In this section, for  brevity, we consider only the almost para-CR case,
corresponding to $\g=\mathfrak{sl}(4,\RR)$. The CR case can be treated similarly.
The Cartan connection and curvature have the following form:
 \begin{equation}\label{eq:A3P123}
\psi=  \begin{pmatrix}
    \phi_2-\rho\! & \xi_2 & \xi_1 & \xi_0\\
    \omega^2 & \!\phi_1-\rho\! & \xi_5 & \xi_4\\
    \omega^1 & \omega^5 & \!\phi_0-\rho\! & \xi_3\\
    \omega^0 & \omega^4 & \omega^3 & \rho
  \end{pmatrix},
	\
\Psi=d\psi+\psi\wedge\psi=  \begin{pmatrix}
    \Phi_2-R\! & \Xi_2 & \Xi_1 & \Xi_0\\
    \Omega^2 & \!\Phi_1-R\! & \Xi_5 & \Xi_4\\
    \Omega^1 & \Omega^5 & \!\Phi_0-R\! & \Xi_3\\
    0 & \Omega^4 & \Omega^3 & R
  \end{pmatrix}
 \end{equation}
where $\rho=\half(\phi_0+\phi_1+\phi_2)$ and  $R=\half(\Phi_0+\Phi_1+\Phi_2)$. 

Imposing the normality conditions and Bianchi identities, modulo $\{\omega^0\}$ one has
\begin{equation}\label{eq:CartanCurvature}
 \begin{aligned}
\Omega^1&\equiv T_-^1\omega^3\w\omega^4,\qquad \Omega^2\equiv T_-^2\omega^3\w\omega^4,\qquad \Omega^3\equiv  T_+^1\omega^1\w\omega^2,\qquad \Omega^4\equiv   T_+^2\omega^1\w\omega^2,\\
    \Omega^5&\equiv -\tfrac 16(T^1_-T^1_++T^2_-T^2_+-W_2)\omega^1\w\omega^4+\tfrac 14(T_+^2T_-^1-W_3)\omega^1\w\omega^3+ \tfrac 14(T_+^2T_-^1+W_3)\omega^2\w\omega^4\\
    &\ -W_4\omega^2\w\omega^3 +x_6\omega^1\w\omega^2+x_4\omega^3\w\omega^4,\\
    \Xi^5&\equiv   -\tfrac 16(T^1_-T^1_++T^2_-T^2_+-W_2)\omega^2\w\omega^3+ \tfrac 14(T_+^1T_-^2-W_1)\omega^2\w\omega^4+ \tfrac 14(T_+^1T_-^2+W_1)\omega^1\w\omega^3\\
    &\ -W_0\omega^1\w\omega^4 +x_2\omega^3\w\omega^4+x_8\omega^1\w\omega^2,\\
    \Phi_0&\equiv  \tfrac{1}{24}(T^1_-T^1_+-5T^2_-T^2_+-4W_2)\omega^1\w\omega^3 - \tfrac{1}{24}(T^2_-T^2_+-5T^1_-T^1_+-4W_2)\omega^2\w\omega^4\\
    &\ +\tfrac 14 W_1\omega^1\w\omega^4-\tfrac 14 W_3\omega^2\w\omega^3 -x_{13}\omega^1\w\omega^2+x_{16}\omega^3\w\omega^4,\\
    \Phi_1&\equiv  -\Phi_0 -(x_{13}+x_{15})\omega^1\w\omega^2+(x_{14}+x_{16})\omega^3\w\omega^4,\\
    \Phi_2&\equiv  \tfrac{1}{12}(T^1_-T^1_+-5T^2_-T^2_+)\omega^1\w\omega^3 + \tfrac{1}{12}(T^2_-T^2_+-5T^1_-T^1_+)\omega^2\w\omega^4+\tfrac 12 T^1_+T^2_-\omega^1\w\omega^4\\
    &\ +\tfrac 12 T^1_-T^2_+\omega^2\w\omega^3 -(x_{13}+x_{15})\omega^1\w\omega^2 -(x_{14}+x_{16})\omega^3\w\omega^4,
  \end{aligned}
\end{equation}
for some functions $T^a_\pm,W_i,x_j$ on $\cG$. 
We omit long expressions for other curvature functions as they are not relevant for this work.

The harmonic invariants from Proposition \ref{kH} can be represented as two torsion components
(here and below we omit pullback $s^*$ via a section  $s:M\to\cG$ for forms on the structure bundle)
 \begin{equation}\label{eq:harmonic-torsions}
  \begin{gathered}
    \tau_{+}=(T^1_+{\partial_{\omega^3}}+T^2_+\partial_{\omega^4})\otimes (\omega^1\w\omega^2),\quad
    \tau_-= (T^1_-\partial_{\omega^1}+T^2_-\partial_{\omega^2})\otimes (\omega^3\w\omega^4).
  \end{gathered}
 \end{equation}
and the curvature component (this and similar invariants will be treated as tensors on $M$)
 \begin{equation}\label{eq:hamornic-curvature}
\!\!\!\!\! \begin{aligned}
W&= \left(W_0(\omega^1)^4+W_1(\omega^1)^3\omega^2+W_2(\omega^1)^2(\omega^2)^2
+W_3\omega^1(\omega^2)^3+W_4(\omega^2)^4\right)\otimes U \otimes (V_-)^{-2} \\
       &+\left(W_0(\omega^4)^4-W_1(\omega^4)^3\omega^3+W_2(\omega^4)^2(\omega^3)^2-W_3\omega^4(\omega^3)^3+W_4(\omega^4)^4\right)\otimes U \otimes (V_+)^{-2} \\
 \end{aligned}
  \end{equation}
where $U=\omega^0,$ $V_-=\omega^1\w\omega^2,$ $V_+=\omega^3\w\omega^4$. 
As a result,  
 $$
W\in\Gamma\left((S^4L_-^*)\otimes(\Lambda^2L^*_-)^{-2}+(S^4L_+^*)\otimes(\Lambda^2L^*_+)^{-2}\right) \otimes \Delta^\perp
 $$ 
and $\tau_\pm\in\Gamma(\Lambda^2L^*_\mp\otimes L_\pm)$.

Note that for $v=a\,(\p_{\omega^1}+\l\p{\omega^2})+b\,(\l\p{\omega^3}-\p{\omega^4})\in\Pi$ we get 
$W(v,v,v,v)=a^4 W_i\lambda^i+b^4 W_i(-\lambda)^i$ and each summand, in turn, can represent the harmonic
curvature.


The correspondence space $\hat{M}$, as a parabolic geometry $(\mu\colon\cG\to\hat{M},\psi)$ of type 
$(A_3,P_{123})$, is the leaf space of the Pfaffian system $\{\omega^0,\cdots,\omega^5\}$.  
The structure equations imply that the zero-curvature condition $W=0$ is equivalent to 
the Frobenius integrability of the Pfaffian system $\{\omega^0,\omega^1,\omega^4,\omega^5\}$ on $\hat{M}$. 
Denote its 4-dimensional leaf space by $\mathcal{T}$. 
Thus, the zero-curvature condition gives a fibration  $\hat{M}\to\mathcal{T}$ with 2-dimensional fibers. 
    
Equivalently, as was discussed in Section \ref{Sec:2.3}, $\hat{M}$ has a (3,5,6) distribution,  
where the rank 3 distribution has a splitting into a line field and an integrable corank one subdistribution.     
In \cite{Mak2} such (3,5,6) distributions are referred to as \emph{causal structures} on $\mathcal{T}$ 
wherein the fibers of $\hat{M}\to\mathcal{T}$ at each point $x\in\mathcal{T}$ can be locally realized as 
the projectivization of a cone of codimension one in $T_x\mathcal{T}$ whose Gauss map has maximal rank. 

In terms of the Cartan geometry $(\mu\colon\cG\to\hat{M},\psi)$ the rank 3 distribution is given by     
$\hat\Pi\oplus\hat\ell=\mu_*\langle\omega^0,\omega^1,\omega^4\rangle^\perp$, where 
$\hat\Pi:=\mu_*\langle\omega^0,\omega^1,\omega^4,\omega^5\rangle^\perp\subset T\hat{M}$ 
is  integrable and equipped with an indefinite bilinear form. In terms of the structure equations 
\eqref{eq:CartanCurvature} the conformal class of the bilinear form 
$\omega^2\circ\omega^3\in S^2\hat\Pi^*$ is well-defined.
Moreover, the line field $\hat\ell=\mu_*\langle\omega^0,\omega^1,\omega^2,\omega^3,\omega^4\rangle^\perp$ 
is the characteristic direction of $\omega^0$ on $\hat{M}$ i.e. $\hat\ell=\langle v\rangle\subset T\hat{M}$, 
where $\omega^0(v)=0$ and $d\omega^0(v,\cdot)=0$. 
The integral curves of $\hat\ell$ foliate $\hat{M}$ and can be thought of as a generalization 
of the null geodesic spray in conformal pseudo-Riemannian structures to causal structures. On $\hat M$ the conformal class $[s^* h],$ where $h=\omega^0\omega^5-\omega^1\omega^4$ and $s\colon\hat M\to \cG$ is a section, is well-defined. If the fibers of the causal structure $\hat M\to \mathcal T$ are the projective quadric, i.e. $\tau_\pm=0,$ then $[s^* h]$ defines an indefinite conformal structure on $\mathcal T$  whose projectivized null cone bundle coincides with $\hat M.$

The almost CR case is similar, except that instead of having two  components $\tau_\pm$ in \eqref{eq:harmonic-torsions}, 
the torsion has only one irreducible component, so we omit the respective arguments. 

Lastly, we point out that subconformal structures on contact 5-manifolds that we consider in this article are also 
referred to as 5-dimensional Lie contact structures of signature (1,1) for $\delta>0$ and (2,0) for $\delta<0$. 


\section{Proof of the main results}\label{Sec:3}

Let $\E:F=0$ be a PDE system in terms of $u$. Following \cite{CK} we introduce dLp's as follows.

 \begin{definition}\label{DLP}
A \emph{dispersionless pair} is a bundle $\pi:\hat{M}_u\to M_u,$ called the
\emph{correspondence space}, whose fibers are connected curves, together with a rank two
distribution $\hat\Pi\subset T\hat M_u$ such that:
 \begin{itemize}
\item for all $\hat x\in \hat M_u$, $\hat\Pi_{\hat x}\subset T_{\hat x} \hat{M}_u$ depends on a finite jet 
	of $u$ at $x=\pi(\hat x)\in M_u$;
\item $\hat\Pi$ is transverse to the fibers of $\pi$, i.e.\ $\hat\Pi\cap\op{Ker}\pi_*=0$.
\end{itemize}
A \emph{spectral parameter} is a local fiber coordinate $\l=\l(\hat x)\colon
\hat M_u\to\RR$ for $\hat M_u\to M_u$.
\end{definition}

 \begin{definition}\label{DIS}
Two dispersionless pairs $\hat\Pi,\hat\Pi'\subset T\hat M_u$ are \emph{$\E$-equivalent}
if $\hat\Pi=\hat\Pi'$ whenever $F(u)=0$.
$\hat\Pi$ is a \emph{dispersionless Lax pair} (\emph{dLp}) for $\E$ if
  for any $\hat\Pi'$ $\E$-equivalent to $\hat\Pi$, the integrability condition
  $[\hat\Pi',\hat\Pi']= \hat\Pi'$ is a nontrivial differential corollary of
  $\E$.
 \end{definition}

To be precise with the notion of a differential corollary and to encompass systems of PDEs,
we introduce some jet formalism, for which we refer to \cite{KV,KL} for further details.

\subsection{Jets, symbols and characteristics}\label{Sec:3.1}

Consider a (vector) bundle $\nu:\V\to M$ of rank $m$ with local fiber coordinates $u=(u^j)$, so that sections
have coordinate expression $u=u(x)$, $x=(x^i)$ ($1\leq i\leq d$, $1\leq j\leq m$).
A $k$-jet of $u$ is an equivalence class of sections by tangency of order $>k$ relation,
and in coordinates it can be  written as $j^ku=(x,u,\p u,\ldots,\p^ku)$, where $\p^lu=(\p_\sigma u^j)$
with the multi-index $\sigma=(i_1,\dots,i_d)$ of length $|\sigma|=\sum_1^di_s=l\leq k$.

This yields the space of $k$-jets $J^k\nu$ and its projective (inverse) limit $J^\infty\nu$.
There are natural projections $\nu_k:J^k\nu\to M$, $k=0,1,\dots,\infty$, and also
$\nu_{k,l}:J^k\nu\to J^l\nu$ for $k>l$. The fibers of $\nu_{k,k-1}$ for $k\ge2$
have a natural affine structure associated with fibers of $S^k T^*M\otimes\V$.
Any section $u:M\to\V$ canonically lifts to the jet section $j^ku$ of $J^k\nu$.

By $f\in C^\infty(J^\infty\nu)$ we mean a function $f$ on $J^k\nu$ for some finite $k$,
which corresponds to a (nonlinear) differential operator of order $k$.
A collection of such functions $F=(F_1,\dots,F_m)$ can be seen as a vector-valued
differential operator $F:J^k\nu\to\W$, where the latter is another (vector) bundle over $M$.

The bundle $J^\infty\nu$ has a canonical flat connection, the \emph{Cartan distribution},
for which the horizontal lift of a vector field $X$ on $M$ is the \emph{total derivative} $D_X$
characterized by $(D_Xf)\circ j^\infty u = X(f\circ j^\infty u)$ for any smooth function $f$ on $J^\infty\nu$.
More generally, any section $X$ of $\nu_\infty^*TM$ has a lift to a vector field $D_X$ on
$J^\infty\nu$, given in local coordinates by $D_X=\sum_i a_iD_i$,
where $X=\sum_i a_i\p_i$, in which  $\p_i=\p_{x^i}$, and $D_i=\p_i+\sum_\alpha u^j_{i\alpha}\p_{u^j_\alpha}$.

Higher order operators $\Box$ in total derivatives, also known as
\emph{$\Cc$-differential operators}, are generated as compositions of the
derivations $D_X$ with coefficients being smooth functions on $J^\infty\nu$.
In local coordinates, $\Box=\sum a_\alpha D_\alpha$, where $a_\alpha\in
C^\infty(J^\infty\nu)$ and $D_\alpha=D_{i_1}\cdots D_{i_j}$ for a multi-index
$\alpha=(i_1,\ldots i_j)$ with entries in $\{1,2,\ldots d\}$.

A PDE of order $k$ is defined as an equation of the form
 \begin{equation}\label{F-ell}
F(j^ku)=0
 \end{equation}
where $F\in C^\infty(J^k\nu,\W)$ is a vector-function.

Let $\I_F$ be the ideal in $C^\infty(J^\infty\nu)$ generated by the pullback of
$F\in C^\infty(J^k\nu)$ and its total derivatives of arbitrary order. Then
the zero-set $\E_\infty\subset J^\infty\nu$ of $\I_F$ is the space of formal solutions
of~\eqref{F-ell}: $u$ is a solution of~\eqref{F-ell} if and only if  $j^\infty u$ is a
section of $\E_\infty$. This embeds $M_u$ to $\E_\infty$.

In this formalism, a \emph{differential corollary} of $\E:F=0$ is a differential ideal
$\I\subset \I_F$; it is \emph{nontrivial} provided that it is not a subset of $\I_{F'}$ for any
$F'$ with the zero-locus being a proper (closed) subset of that for $F$.
Thus, in Definition~\ref{DIS}, the integrability condition for a dLp
$\hat\Pi$ for $\E:F=0$ need not generate $\I_F$: indeed, the freedom to replace
a dLp by an $\E$-equivalent one may change the ideal $\I\subset \I_F$ that its
integrability conditions generate.

For a function $F\in C^\infty(J^k\nu)$ the vertical part of the $1$-form
$dF\in\Omega^1(J^\infty\nu)$ may be viewed in coordinates as a (vector-valued) polynomial
on $\nu_\infty^*T^*M$ given by
 \[
\sum_{j=0}^k F_{(j)}\quad\text{where} \quad F_{(j)} =\sum_{|\alpha|=j}
(\p_{u_\alpha} F) \p_\alpha\quad\text{is a section of}\quad\nu_\infty^*S^jTM\otimes\V^*.
 \]
The top degree term $\z_F=F_{(k)}$, called the (order $k$) \emph{symbol}
of $F$, is independent of coordinates. We assume it is nonvanishing: if it
vanishes, $F$ has order $\leq k-1$ and $\z_F$ has lower degree.

Similarly, for a $\W$-valued vector-function $F$ on $J^k\nu$ the symbol
$\z_F$ is a homogeneous degree $k$ polynomial on $\pi_\infty^*T^*M$ with values
in $\op{Hom}(\V,\W)$. For a PDE system $\E:F=0$ of order $k$ it is not identically zero,
and the \emph{characteristic variety} is defined by~\cite{Sp}
 \[
\op{Char}(\E,u)=\{[\theta]\in\PP(\pi_\infty^* T^*M_u)\,|\, \z_F(\theta)
\text{ is not injective}\}.
 \]
If $\V$ and $\W$ have the same rank $m$, then $\z_F$ is represented by a $m\times m$ matrix $[\z_{ij}]$
of polynomials of order $k$, and the (projective) covector $[\theta]$ is characteristic if and only if
$\z_F(\theta)$ is not bijective, whence
 $$
\op{Char}(\E,u)=\{[\theta]\in\PP(\pi_\infty^* T^*M_u)\,|\, \op{det}[\z_{ij}(\theta)]=0\}.
 $$
Thus, a determined system is defined by the condition $\op{codim}\op{Char}(\E)=1$.

\subsection{Normality condition}\label{Sec:3.2}

In this section we  prove Theorem \ref{Th1}.

In order for $\hat\Pi$ to be a dispersionless Lax pair for an equation $\E:
F=0$, we require that the integrability condition $[\hat\Pi,\hat\Pi]=\hat\Pi$
holds modulo $\E$, i.e.\ when $F=0$, or, to use physics terminology, \emph{on shell}.

  \begin{definition} 
We say that the dispersionless pair $\hat\Pi\subset T\hat M_u$
is \emph{normal} if $[\hat\Pi,\hat\Pi]\subset\pi_*^{-1}\Pi$ \emph{off shell}, i.e.\ without
assuming $F=0$. In other words, $\pi_*[\hat\Pi,\hat\Pi]=\Pi$.
  \end{definition}

Consider local Darboux coordinates $(x,y,p,q,r)$ on $M_u$ so that for
$\omega^0=dr-p\,dx-q\,dy$ we get $\Delta=\op{Ker}(\omega^0)=\langle v_1,v_2,w_1,w_2\rangle$
with $v_1=\p_x+p\p_r$, $v_2=\p_y+q\p_r$, $w_1=\p_p$, $w_2=\p_q$.
Let $\lambda$ be a local coordinate on the fiber of $\pi:\hat{M}_u\to M_u$. 
Then, because a dLp is characteristic \cite{CK}, we get that $\Pi\subset\Delta$ can be chosen
\emph{isotropic} off-shell, and so in an open region of $\hat{M}_u$ generators
of the distribution $\hat\Pi$, as well as Darboux coordinates, can be chosen so that
 \begin{equation}\label{XYhat}
\hat{X}=v_1+a\,w_1+b\,w_2+m\,\p_\l,\quad \hat{Y}=v_2+b\,w_1+c\,w_2+n\,\p_\l,
 \end{equation}
for some coefficients $a,b,c,m,n$ depending on coordinates of $J^\infty\nu$ and $\l$.

Then $\hat\Pi=\langle\hat{X},\hat{Y}\rangle$ is normal if and only if $[\hat{X},\hat{Y}]$
is a multiple of $\p_\l$. In this case the
integrability condition reduces to the vanishing of the $\p_\l$-component
$\hat X(n)-\hat Y(m)$ of the vector field $[\hat X,\hat Y]$ with no $\lambda$-dependency.
The genericity condition we need here is as follows.

 \begin{definition}\label{df:nondegeneracy}
An isotropic $2$-plane congruence $\Pi=\op{Ker}\{\omega^0,\zeta,\theta\}\subset\Delta$
is called \emph{nondegenerate} if 
 \begin{equation}\label{eq:ndg}
\theta\wedge\zeta\wedge\theta_\l\wedge\zeta_\l\wedge\omega^0\neq 0.
\end{equation}
 \end{definition}

This condition depends only on $\Pi$, and not on a choice of generators $\omega^0,\zeta,\theta$ of $\op{Ann}(\Pi)$.
Indeed the nondegeneracy can be expressed, in terms of the generators
$X=v_1+a\,w_1+b\,w_2$, $Y=v_2+b\,w_1+c\,w_2$ of $\Pi$, as
 $$
(d\omega^0\w d\omega^0)(X,Y,X_\l,Y_\l)\neq0,
 $$
or in terms of their coefficients as follows:
 \begin{equation}\label{z2}
a_{\l}c_{\l}-b_{\l}^2\neq 0
 \end{equation}
where $a\lambda=\partial_\lambda a.$
 \begin{lemma} \label{lem4D}
Any nondegenerate isotropic $2$-plane congruence $\Pi$ has a unique normal lift.
 \end{lemma}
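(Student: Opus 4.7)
I would work in the Darboux coordinates $(x,y,p,q,r)$ on $M_u$ and the corresponding fiber coordinate $\lambda$ on $\hat M_u$ fixed just before the lemma, with the explicit generators $\hat X=X+m\,\partial_\lambda$, $\hat Y=Y+n\,\partial_\lambda$ from \eqref{XYhat}, and show that the normality condition determines $m$ and $n$ uniquely. A direct bracket computation yields
\begin{equation*}
[\hat X,\hat Y]=[X,Y]+m\,Y_\lambda-n\,X_\lambda+\bigl(X(n)-Y(m)+m\,n_\lambda-n\,m_\lambda\bigr)\partial_\lambda,
\end{equation*}
so the normality condition $\pi_*[\hat\Pi,\hat\Pi]=\Pi$ is equivalent to the horizontal part $[X,Y]+m\,Y_\lambda-n\,X_\lambda$ being a section of $\Pi$.

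The crucial structural observation is that the isotropy of $\Pi$ forces $[X,Y]\in\Delta$. Indeed, $\omega^0(X)=\omega^0(Y)=0$, and for the coefficients in \eqref{XYhat} one checks $d\omega^0(X,Y)=0$, so Cartan's identity gives $\omega^0([X,Y])=0$. This is the key dimension-reducing step: passing to the quotient $\Delta/\Pi\cong\langle w_1,w_2\rangle$ (where $v_1,v_2$ are represented by suitable combinations of $w_1,w_2$ modulo $\Pi$), one may write $[X,Y]\equiv\alpha\,w_1+\beta\,w_2\pmod\Pi$ for functions $\alpha,\beta$ depending on finitely many jets of $u$ and on $\lambda$. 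Since $X_\lambda=a_\lambda w_1+b_\lambda w_2$ and $Y_\lambda=b_\lambda w_1+c_\lambda w_2$ automatically lie in $\langle w_1,w_2\rangle$, the normality condition reduces modulo $\Pi$ to the square linear system
\begin{equation*}
\begin{pmatrix} -a_\lambda & b_\lambda \\ -b_\lambda & c_\lambda \end{pmatrix}\begin{pmatrix} n \\ m \end{pmatrix}=-\begin{pmatrix} \alpha \\ \beta \end{pmatrix}.
\end{equation*}

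The determinant of this $2\times 2$ system is $\pm(a_\lambda c_\lambda-b_\lambda^2)$, which is precisely the nondegeneracy condition \eqref{z2}. Cramer's rule then produces a unique pair $(m,n)$ expressed rationally in $\alpha,\beta,a_\lambda,b_\lambda,c_\lambda$; in particular each depends only on $\lambda$ and on finitely many derivatives of $u$ at $x$, so $\hat\Pi=\langle\hat X,\hat Y\rangle$ is a well-defined dispersionless pair satisfying the normality condition, and it is uniquely determined.

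The only real subtlety is a dimension count: a priori $[X,Y]$ projects into the $3$-dimensional quotient $TM_u/\Pi$, in which case the normality equation would be underdetermined ($3$ scalar conditions in $2$ unknowns) and existence would fail generically. It is the isotropy of $\Pi$ that cuts the ambient quotient down to $\Delta/\Pi$ of dimension $2$, rendering the system square, after which the nondegeneracy hypothesis upgrades mere solvability to unique solvability. Once these two reductions are in place, the rest of the argument is formal linear algebra.
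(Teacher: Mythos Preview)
Your proof is correct and follows essentially the same approach as the paper. The paper computes directly in coordinates that $dx([\hat X,\hat Y])=dy([\hat X,\hat Y])=dr([\hat X,\hat Y])=0$ identically and that $dp([\hat X,\hat Y])=dq([\hat X,\hat Y])=0$ yields the same $2\times 2$ linear system with determinant $a_\lambda c_\lambda-b_\lambda^2$; your framing via the Cartan identity (isotropy $\Rightarrow [X,Y]\in\Delta$) followed by passage to $\Delta/\Pi\cong\langle w_1,w_2\rangle$ is a slightly more invariant phrasing of the same coordinate computation, and the resulting linear system agrees with the paper's after relabeling.
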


 \begin{proof}
If $\hat X$ and $\hat Y$ are given by~\eqref{XYhat}, then equalities
$dx([\hat X,\hat Y])=dy([\hat X,\hat Y])=dr([\hat X,\hat Y])=0$ hold identically, while
$dp([\hat X,\hat Y])=dq([\hat X,\hat Y])=0$ form two linear equations on $m,n$:
 \begin{equation*}
\begin{bmatrix} a_\l & b_\l \\ b_\l & c_\l\end{bmatrix}
\begin{bmatrix}n \\ -m\end{bmatrix} =
\begin{bmatrix} (X(b)\!-\!Y(a)) \\ (X(c)\!-\!Y(b))\end{bmatrix};
 \end{equation*}
these have a unique solution by the nondegeneracy condition~\eqref{z2}.
 \end{proof}

 \begin{proposition}\label{p:normal}
Let $\hat\Pi$ be a dLp such that $\Pi=\pi_*(\hat\Pi)$ is nondegenerate. Then $\hat\Pi$ is $\E$-equivalent
to a normal dLp. Such a dispersionless Lax pair is unique.
 \end{proposition}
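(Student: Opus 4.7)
The plan is to construct the normal representative by applying Lemma \ref{lem4D} to the projected congruence $\Pi=\pi_*(\hat\Pi)$, and then verify both $\E$-equivalence and uniqueness. Since $\Pi$ is nondegenerate, Lemma \ref{lem4D} produces a unique pair of coefficients $(m_*,n_*)$, giving a normal dispersionless pair $\hat\Pi_*=\langle X+m_*\p_\l,\,Y+n_*\p_\l\rangle$ on $\hat M_u$, where $X,Y$ are fixed $\l$-parametric generators of $\Pi$ as in \eqref{XYhat}. This is the only candidate, so the task reduces to showing that $\hat\Pi_*$ and $\hat\Pi$ coincide on $\E_\infty$, and that $\hat\Pi_*$ is again a dLp.

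The first key computation is that for any lift $\hat X=X+m\p_\l$, $\hat Y=Y+n\p_\l$, one has
$$
\pi_*[\hat X,\hat Y]=[X,Y]+m\,Y_\l-n\,X_\l,
$$
where $X_\l,Y_\l$ denote $\p_\l$-derivatives of the $\l$-parametric vector fields. The linear system on $(m,n)$ that characterizes $(m_*,n_*)$ in Lemma \ref{lem4D} is exactly the requirement that this projection lie in $\Pi=\langle X,Y\rangle$, realized as two scalar equations in the $dp,dq$-components whose coefficient matrix is invertible by the nondegeneracy \eqref{z2}. On $\E_\infty$ the dLp property forces $[\hat X,\hat Y]=\alpha\hat X+\beta\hat Y$, and projecting gives $\pi_*[\hat X,\hat Y]=\alpha X+\beta Y\in\Pi$. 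Hence $(m,n)$ satisfies on $\E_\infty$ the same linear normality system as $(m_*,n_*)$, and uniqueness in Lemma \ref{lem4D} yields $(m,n)=(m_*,n_*)$ on $\E_\infty$. This is precisely the $\E$-equivalence $\hat\Pi_*\sim\hat\Pi$, and integrability of $\hat\Pi_*$ on $\E_\infty$ is then inherited from that of $\hat\Pi$, so $\hat\Pi_*$ is itself a dLp.

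For uniqueness, suppose $\hat\Pi'$ is any normal dLp $\E$-equivalent to $\hat\Pi$. Then $\pi_*(\hat\Pi')=\pi_*(\hat\Pi)=\Pi$ (since equality holds on $\E_\infty$ and $\Pi$ is determined from a single jet of $u$), and the off-shell normality of $\hat\Pi'$ together with the uniqueness statement of Lemma \ref{lem4D} forces $\hat\Pi'=\hat\Pi_*$.

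The only subtle point is verifying that the equations extracted from the on-shell identity $[\hat X,\hat Y]=\alpha\hat X+\beta\hat Y$ are \emph{exactly} the normality system used in Lemma \ref{lem4D}, and not a weaker set of projections. This reduces to the observation that both sides of the identity already lie in $\Delta$: isotropy of $\Pi$ with respect to $d\omega^0|_\Delta$ gives $d\omega^0(X,Y)=0$, hence $[X,Y]\in\Delta$, and likewise $X_\l,Y_\l\in\Delta$. Thus the comparison only involves the two components of $\Delta$ transverse to $\Pi$, which are the $dp,dq$ components appearing in Lemma \ref{lem4D}.
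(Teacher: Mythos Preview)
Your argument is correct and reaches the same conclusion as the paper, but by a somewhat different route. The paper proceeds constructively: it observes that the failure of normality satisfies $dp\circ\pi_*[\hat X,\hat Y]=\Box_1 F$ and $dq\circ\pi_*[\hat X,\hat Y]=\Box_2 F$ for $\Cc$-differential operators $\Box_1,\Box_2$, and then modifies $\hat X\mapsto\hat X+A(F)\p_\l$, $\hat Y\mapsto\hat Y+B(F)\p_\l$, solving a linear system (with the same nondegenerate matrix as in Lemma~\ref{lem4D}) for the operators $A,B$. You instead build the normal lift $\hat\Pi_*$ of $\Pi$ directly from Lemma~\ref{lem4D} and then argue that on $\E_\infty$ the integrability $[\hat X,\hat Y]\in\hat\Pi$ forces $(m,n)$ to satisfy the \emph{same} linear system, hence $(m,n)=(m_*,n_*)$ on shell. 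The paper's version has the advantage of exhibiting the correction explicitly as an operator applied to $F$, which makes the $\E$-equivalence manifest; your version is cleaner in that it reduces everything to the uniqueness in Lemma~\ref{lem4D}.

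One small point worth tightening in your final paragraph: the reason the on-shell identity $\pi_*[\hat X,\hat Y]=\alpha X+\beta Y$ yields exactly the two normality equations is not that $dp,dq$ are ``transverse to $\Pi$'' (they are not --- $X,Y$ have nonzero $\p_p,\p_q$ components), but rather that $\pi_*[\hat X,\hat Y]$ has \emph{zero} $dx,dy,dr$ components (as noted in the proof of Lemma~\ref{lem4D}), while $\alpha X+\beta Y$ has $dx$-component $\alpha$ and $dy$-component $\beta$. This forces $\alpha=\beta=0$, so $\pi_*[\hat X,\hat Y]=0$ on shell, and then the $dp,dq$ components give precisely the linear system. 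Your uniqueness paragraph is also slightly over-stated: $\E$-equivalence only gives $\pi_*(\hat\Pi')=\Pi$ on $\E_\infty$, not off shell; the paper (implicitly) and Lemma~\ref{lem4D} only claim uniqueness among normal lifts of the fixed projection $\Pi$, which is all that is needed downstream.
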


 \begin{proof}
The on shell Lax pair condition implies
 \[
dp\circ\pi_*[\hat X,\hat Y]=\Box_1F,\qquad dq\circ\pi_*[\hat X,\hat Y]=\Box_2F
 \]
for some operators $\Box_1,\Box_2$ in total derivatives.  Let us modify
$\tilde X=\hat X+A(F)\p_\l$, $\tilde Y=\hat Y+B(F)\p_\l$, where $A,B$ are
operators in total derivatives.
The new commutation equation modulo $\p_\l$ is
 \begin{gather*}
dp\circ\pi_*[\tilde X,\tilde Y]=(\Box_1 - a_\l B + b_\l A)F\\
dq\circ\pi_*[\tilde X,\tilde Y]=(\Box_2 - b_\l B + c_\l A)F.
 \end{gather*}
Vanishing of these, equivalent to normality, can be achieved by a unique
choice of the operators in total derivatives $A,B$ due to nondegeneracy
condition \eqref{z2}.
 \end{proof}

This finishes the proof of Theorem \ref{Th1}.

\subsection{Zero-curvature condition}\label{Sec:3.3}

As was noted above, due to the characteristic condition \cite{CK}, the 2-plane congruence
is isotropic both with respect to conformal symplectic and subconformal structures.
In other words, for every $x\in M$, $\Pi\in\op{Gr}(2,T_xM)$ as a function of $\lambda$ is
a section of $\mathcal{A}_x\cup\mathcal{B}_x$. However, as we saw in \ref{Sec:2.1},
isotropic planes in $\mathcal{B}_x$ are discrete (two points for $\delta>0$ and empty for $\delta<0$).
Thus, if we postulate essential dependence on $\lambda$, i.e.\ the spectral parameter
is \emph{non-removable}, the congruence has to take values in the bundle of $\alpha$-planes.

 \begin{definition}
A dLp $\hat{\Pi}$ is called \emph{immersed} if the underlying 2-plane congruence consists of $\alpha$-planes
and the map $\lambda\mapsto\Pi$ is an immersion to $\PP^1=\mathcal{A}_x\subset\op{Gr}(2,T_xM)$ for every $x\in M$.
 \end{definition}

 \begin{lemma}\label{LlL}
A dLp $\hat{\Pi}$ is immersed if and only if  it is nondegenerate.
 \end{lemma}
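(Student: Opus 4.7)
The plan is to reduce the equivalence to a short coframe computation after first localizing the congruence inside the $\alpha$-family.

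First I would argue that both notions force $\Pi(\lambda)=\pi_*(\hat\Pi)_\lambda$ to lie in $\mathcal{A}_x$ at each $x\in M$. By the characteristic property recalled at the start of Section~\ref{Sec:3.3}, every $\Pi(\lambda)$ is isotropic for both the subconformal structure $c_{\mathcal E}$ and the conformally symplectic form $\Omega=d\omega^0|_\Delta$; it is therefore Lagrangian and null, hence belongs to $\mathcal{A}_x\cup\mathcal{B}_x$ by \eqref{eq:alpha-beta-planes}. The discussion following~\eqref{Om+} shows that the $\Omega$-Lagrangian members of $\mathcal{B}_x$ form a discrete set --- the two planes $L_\pm$ when $\delta>0$, and are empty when $\delta<0$. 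Thus any part of the congruence taking values in $\mathcal{B}$ must be locally constant in $\lambda$, in which case $\zeta_\lambda,\theta_\lambda\in\langle\omega^0,\zeta,\theta\rangle$ and the $5$-form in \eqref{eq:ndg} vanishes. Consequently nondegeneracy forces $\Pi(\lambda)\in\mathcal{A}_x$; for immersion this inclusion is built into the definition.

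Second, once the congruence lies in $\mathcal{A}_x$, I parametrize it by the canonical coordinate on $\mathcal{A}_x\cong\PP^1$ coming from~\eqref{eq:alpha-beta-planes}: there is a smooth function $\mu=\mu(\lambda,x)$ with
$$\op{Ann}(\Pi(\lambda))=\langle\omega^0,\ \zeta,\ \theta\rangle,\qquad \zeta=\mu\,\omega^1-\omega^2,\qquad \theta=\mu\,\omega^4+\omega^3,$$
in the adapted coframe of Corollary~\ref{crGL}. Since the $\omega^i$ are independent of $\lambda$, one gets $\zeta_\lambda=\mu_\lambda\,\omega^1$ and $\theta_\lambda=\mu_\lambda\,\omega^4$, so a direct wedge computation yields
$$\theta\wedge\zeta\wedge\theta_\lambda\wedge\zeta_\lambda\wedge\omega^0=\mu_\lambda^{\,2}\;\omega^0\wedge\omega^1\wedge\omega^2\wedge\omega^3\wedge\omega^4.$$
As $\{\omega^i\}_{i=0}^4$ is a coframe on $M$, the right-hand $5$-form is nonvanishing, so the nondegeneracy condition \eqref{eq:ndg} is equivalent to $\mu_\lambda\neq 0$.

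Finally, the differential of the map $\lambda\mapsto\Pi(\lambda)\in\mathcal{A}_x\cong\PP^1$ at a given $\lambda$ is exactly $\mu_\lambda\,\partial_\mu$, so it is nonzero precisely when $\mu_\lambda\neq 0$; this is the immersion condition. Combining the two equivalences closes the proof. The main point requiring care is the exclusion of the $\mathcal{B}$-component in the first step; after that, the identification of nondegeneracy with $\mu_\lambda\neq 0$, and hence with immersion, is essentially a one-line calculation in the adapted coframe.
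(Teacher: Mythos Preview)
Your proof is correct and follows essentially the same idea as the paper's: after localizing the congruence to the $\alpha$-family, both you and the paper reduce the statement to checking that the nondegeneracy condition is equivalent to $\mu_\lambda\neq 0$ for the $\PP^1$-coordinate $\mu$, hence to immersion. The only cosmetic difference is that the paper works in the dual picture, identifying the Darboux frame $v_1,v_2,w_1,w_2$ of Section~\ref{Sec:3.2} with $\partial_{\omega^1},\partial_{\omega^4},-\partial_{\omega^3},\partial_{\omega^2}$ and reading off $a=c=0$, $b=\lambda$ in the form~\eqref{XYhat}, so that~\eqref{z2} becomes $-1\neq 0$; you instead compute the $5$-form~\eqref{eq:ndg} directly in the adapted coframe. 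Your treatment of the $\mathcal{B}$-exclusion is a bit more explicit than the paper's, which relegates that reasoning to the paragraph preceding the lemma.
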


 \begin{proof}
Choosing a frame so that the subconformal structure has form \eqref{gg} and the
conformal symplectic is \eqref{Om-} or \eqref{Om+}, the 2-plane congruence  of $\alpha$-planes
takes form \eqref{gPi}. Then we can choose the symplectic basis
$v_1=\p_{\omega^1}$, $v_2=\p_{\omega^4}$, $w_1=-\p_{\omega^3}$, $w_2=\p_{\omega^2}$.
With respect to these choices one obtains $a=c=0$ and  $b=\l$, so that condition \eqref{z2} holds.
In other words, in the immersed case we can take the coordinate $\lambda$ on $\PP^1$
to be a spectral parameter, which implies nondegeneracy, and the converse
follows from the same computation if we assume, for instance, $b_\l\neq0$.
 \end{proof}

 \begin{proof}[Proof of Theorem \ref{Th2}]
Let us first note that if $F$ has order $k$, then the subconformal structure $(\Delta,c_F)$
has order  $\leq k$ in $u$ and, therefore, is well-defined and, moreover, is nondegenerate for almost any $u,$ which may not necessarily be a solution.  Note that the order is $<k$, for example, if $F$ is quasilinear.
 By Lemma \ref{lem4D} and Proposition \ref{p:normal} the normal lift to the correspondence space
is a first order operator, and hence the standard dLp has order $\leq k+1$ in $u$.

Suppose next that $\hat\Pi\subset T\hat M_u$ is a dLp for $\E$. Then
$\Pi=\pi_*(\hat\Pi)$ is characteristic, and hence it is a nondegenerate
congruence of $\alpha$-planes for generic $u$. Then by Lemma \ref{LlL}
$\Pi$ immerses into $\op{Gr}(2,TM_u)$ and hence, by Theorem \ref{Th1},
$\hat\Pi$ is $\E$-equivalent to a standard dLp over any open subset of $M_u$.
By the results of Section \ref{Sec:2.2} the curvature is zero on $M_u$
for every solution $u$ of $\E$.
Hence the zero-curvature condition is a nontrivial differential corollary of $\E$, as required.

Conversely, suppose that the condition $W=0$ is a nontrivial differential
corollary of $\E$, and let $\hat\pi:\hat M_u\to M_u$ be the bundle of $\alpha$-planes.
Then if $\hat\Pi$ is $\E$-equivalent to a standard dLp on an open subset of $M_u$,
the integrability of $\hat\Pi$ is a differential corollary of $\E$ on that open subset, since this is true for the standard dLp.
If any such $\hat\Pi$ is a differential corollary of a proper subsystem
$\E'$ of $\E$, then the first part of the argument implies that the zero-curvature condition
is also a consequence of $\E'$, contradicting nontriviality.
\end{proof}

\subsection{Master-equation}\label{Sec:3.4}

We restrict to the almost para-CR case $\delta>0$ and work in the framework of Section \ref{Sec:2.1}.
The spectral parameter $\lambda$ is defined up to projective transformation, depending on the base point $x\in M_u$.
By integrability, there are infinitely many  $\alpha$-surfaces, i.e.\ 2-dimensional submanifolds
of $M_u$ whose tangent plane at each point coincides with  $\Pi=\Pi_\l$ for some $\l.$ They can be thought of as projected integral surfaces of $\hat\Pi,$ and hence, there are  4-parameter family  of them.  
Thus, using the freedom of projective reparametrization of $\PP^1$ for $\a$-planes, 
we can arrange a null and Legendrian foliation  corresponding to the value $\lambda=\infty$. In other words,
we can assume the Legendrian $c_\E$-isotropic  distribution $\Pi_\infty$ to be integrable.

Now we straighten this distribution $\Pi_\infty$, i.e.\ choose Darboux coordinates $(x,y,p,q,r)$ such that
a contact form  $\omega^0\in \op{Ann}(\Delta)$ can be expressed as $\omega^0=dr-p\,dx-q\,dy$ and that $\Pi_\infty=\langle\p_p,\p_q\rangle.$ Note that 
straightening a Legendrian foliation is possible by a canonical transformation. Then
 $$
g=a_{11}\,dx\,dp+a_{12}\,dx\,dq+a_{21}\,dy\,dp+a_{22}\,dy\,dq +b_{11}\,dx^2+2\,b_{12}\,dx\,dy+b_{22}\,dy^2.
 $$
Computing the operator $J$, the compatibility conditions between $g$ and $\Omega$, which is equivalent to $J^2=c\cdot\1$ 
for a positive constant $c$ on $\Delta$, are the following:
 \begin{equation}\label{tmp}
a_{12}(a_{11}+a_{22})=0,\ a_{21}(a_{11}+a_{22})=0,\ a_{11}^2=a_{22}^2,\
a_{21}b_{11}-a_{12}b_{22}=(a_{11}-a_{22})b_{12}.
 \end{equation}
together with the normalization $a_{12}a_{21}+a_{22}^2=4$ coming
from the constraint $\det J=1$ required to compute $L_{\pm}=\op{Ker}(J\mp\1)$.
Equations \eqref{tmp} branch as follows:
 \begin{itemize}
\item $a_{11}=a_{22}$, $a_{12}=a_{21}=0$;
\item $a_{11}=-a_{22}$, $a_{12}b_{22}-a_{21}b_{11}=2\,a_{22}b_{12}$.
 \end{itemize}
The first branch has less parameters and can be transformed to a particular case 
of the second branch. 
Henceforth we proceed with the latter. Using the conformal freedom for the metric,
we impose a different conformal normalization $a_{12}=1$ to simplify computations.
We get
 $$
g= b_{11}\,dx^2+2\,b_{12}\,dx\,dy+(a_{21}b_{11} + 2\,a_{22}b_{12})\,dy^2
+\,dx\,dq+a_{21}\,dy\,dp+a_{22}\,(dy\,dq-dx\,dp).
 $$
Introducing a change of dependent variables $u=a_{22}$, $v=\sqrt{a_{22}^2+a_{21}}$, $w=b_{11}$, $z=2b_{12}$,
and using the coframe below, in addition to the contact form $\omega^0$,
 \begin{gather*}
\omega^1 = dx + (u+v)\,dy,\quad
\omega^2 = dq - (u+v)\,dp + w\,dx + (z-w(u+v))\,dy,\\
\omega^3 = dq - (u-v)\,dp + w\,dx + (z-w(u-v))\,dy,\quad
\omega^4 = dx + (u-v)\,dy,
 \end{gather*}
we get the following form for a conformal representatives on $\Delta$:
 $$
2g=\omega^1\omega^3+\omega^2\omega^4,\
2v\,\Omega=\omega^1\wedge\omega^3+\omega^2\wedge\omega^4.
 $$
Then, with the notations $\widetilde{\p_x}=\p_x+p\p_r$ and $\widetilde{\p_y}=\p_y+q\p_r$, we get
 \begin{gather*}
L_-=\langle\p_{\omega^1},\p_{\omega^2}\rangle=
\langle(u-v)\widetilde{\p_x}-\widetilde{\p_y}+w\p_p-(w(u-v)-z)\p_q, \p_p+(u-v)\p_q\rangle,\\
L_+=\langle\p_{\omega^4},\p_{\omega^3}\rangle=
\langle(u+v)\widetilde{\p_x}-\widetilde{\p_y}+w\p_p-(w(u+v)-z)\p_q, \p_p+(u+v)\p_q\rangle.
 \end{gather*}
These define the para-CR structure on $M_u$, whose family of $\alpha$-planes is
$\Pi_\lambda=\langle\p_{\omega^1}+\lambda\p_{\omega^2},\lambda\p_{\omega^3}-\p_{\omega^4}\rangle$.
A linear combination with the change of the parameter $\lambda\mapsto -v\lambda$ yields new generators
 \begin{equation}\label{MEdLp1}
X=\widetilde{\p_x}+\lambda\p_p+(\lambda u-w)\p_q,\ Y=\widetilde{\p_y}+ (\lambda u-w)\p_p+(\lambda(u^2-v^2)-z)\p_q.
 \end{equation}
The corresponding (standard) dLp is
 \begin{equation}\label{MEdLp2}
\hat{X}=X+m\p_\lambda,\quad \hat{Y}=Y+n\p_\lambda, 
 \end{equation}
where, by the normality condition,
 \com{
 \begin{align*}
m =&\, \frac1{v^2}\Bigl( \bigl(v^2u_q -2u\,v\,v_q - 2v\,v_p\bigr)\lambda^2
+ \bigl(2u^2w_q + u\,(p\,u_r-w\,u_q + u_x + 2w_p - z_q) \\
 & - v^2w_q + v\,(2w\,v_q - 2p\,v_r - 2v_x)
 + w\,u_p + z\,u_q - q\,u_r - u_y - z_p\bigr)\lambda \\
 & + u\,(p\,w_r -w\,w_q + w_x)  + w\,(z_q-w_p)
 + q\,w_r - p\,z_r - z\,w_q + w_y - z_x\Bigr), \\
n =&\, \frac1{v^2}\Bigl(v\bigl(2u(v\,u_q -u\,v_q - v_p) + v\,u_p\bigr)\lambda^2
 + \bigl(2u^3w_q + u^2(p\,u_r - w\,u_q  + u_x + 2w_p - z_q)\\
 & + v^2(p\,u_r - w\,u_q + u_x - w_p)
 + 2u\,v(w\,v_q - v\,w_q - p\,v_r  - v_x)
 + u(w\,u_p - q\,u_r \\
 & + z\,u_q - u_y - z_p)\bigr)\lambda
 + u^2(p\,w_r - w\,w_q+ w_x)
+ v^2(w\,w_q - p\,w_r - w_x)\\
 & + u\,(w\,z_q - w\,w_p + q\,w_r - p\,z_r - z\,w_q + w_y - z_x)\Bigr).
 \end{align*}
These formulae are rather complicated, so we use
 }
\begin{equation}\label{MEdLp3}
\begin{aligned}
m=&\, \frac1{v^2}\Bigl( \bigl(u\,X(u) - 2v\,X(v) - Y(u)\bigr)\lambda + u\,X(w) - X(z) + Y(w) \Bigr),\\
n=&\, \frac1{v^2}\Bigl( \bigl((u^2+v^2)\,X(u) - 2u\,v\,X(v) - u\,Y(u)\bigr)\lambda
+(u^2-v^2)\,X(w) - u\,X(z) + u\,Y(w) \Bigr).
\end{aligned}
\end{equation}
Now the curvature, which is the obstruction to integrability, is given by
 $$
W=X(n) - Y(m) + m\,n_\lambda - n\,m_\lambda=
 W_0+W_1\lambda+W_2\lambda^2+W_3\lambda^3+W_4\lambda^4.
 $$
In general $W\in\odot^4\RR^2$ is a polynomial of degree 4 in $\lambda$, but due to straightening of $\Pi_\infty$ 
we get $W_4=0$, so integrability is given by 4 second order PDEs
 \begin{equation}\label{master5D}
W_0=W_1=W_2=W_3=0
 \end{equation}
on 4 functions $u,v,w,z$ of $x,y,p,q,r$.
 \com{
Equations take long form in coordinates, for instance
 \begin{multline*}
W_3= v^3\,u_{q,q} - 2v^2(u\,v_{q,q} + u_qv_q + v_{p,q}) +
 u\,v\,(u\,u_{q,q} + 2u_q^2 + 2v_q^2 + 2u_{p,q}) \\
 + v\,(2u_pu_q + 2v_pv_q +  u_{p,p})
 - 2(u\,v_q + v_p)(u\,u_q + u_p).
 \end{multline*}
 }
Using decompositions
 \begin{align*}
X=&\, X_0+\lambda X_1, & X_0=&\, \p_x+p\p_r-w\p_q, & X_1=&\, \p_p+u\p_q,\\
Y=&\, Y_0+\lambda Y_1, & Y_0=&\, \p_y+q\p_r-w\p_p-z\p_q, & Y_1=&\, u\p_p+(u^2-v^2)\p_q.
 \end{align*}
and  $m=m_0+\lambda m_1+\lambda^2 m_2$, $n=n_0+\lambda n_1+\lambda^2 n_2$, where
 \begin{gather*}
m_0 = v^{-2} \bigl(u\,X_0(w) - X_0(z) + Y_0(w) \bigr),\quad
m_2 = v^{-2} \bigl(u\,X_1(u) - 2v\,X_1(v) - Y_1(u)\bigr),\\
m_1 = v^{-2} \bigl(u\,X_0(u) - 2v\,X_0(v) - Y_0(u) + u\,X_1(w) - X_1(z) + Y_1(w) \bigr),\\
n_0 = v^{-2} \bigl((u^2-v^2)\,X_0(w) - u\,X_0(z) + u\,Y_0(w) \bigr),\\
n_1 = v^{-2} \bigl((u^2+v^2)\,X_0(u) -2u\,v\,X_0(v) -u\,Y_0(u) +(u^2-v^2)\,X_1(w) -u\,X_1(z) +u\,Y_1(w) \bigr),\\
n_2 = v^{-2} \bigl((u^2+v^2)\,X_1(u) - 2u\,v\,X_1(v) - u\,Y_1(u)\bigr),
 \end{gather*}
the components of \eqref{master5D} are given by
 \begin{equation}\label{W0123}
 \begin{gathered}
W_0=X_0(n_0) - Y_0(m_0) + m_0n_1 - n_0m_1,\\
W_1=X_0(n_1)+X_1(n_0) -Y_0(m_1)-Y_1(m_0) +2(m_0n_2 - n_0m_2),\\
W_2=X_0(n_2)+X_1(n_1) -Y_0(m_2)-Y_1(m_1) +m_1n_2 - n_1m_2,\\
W_3=X_1(n_2) - Y_1(m_2).
 \end{gathered}
 \end{equation}

 \begin{proof}[Proof of Theorem \ref{Th3}]
Master equation \eqref{W0123} is a system of 4 second-order PDEs on 4 functions. 
Its symbol given by the $4\times4$ matrix $B$, whose row number $k+1$ ($0\le k\le3$) 
consists of symbols of $W_k$ by variables $u,v,w,z$ in turn. 
(The symbol of $m$-th order differential operator $F$ by $u$ is $\sum\frac{\p F}{\p u_{\a}}\p_\a$, 
where $\a=(\a_1,\dots,\a_5)$ is a multi-index of length $|\a|=m$ and $\p_\a=\p_1^{\a_1}\cdots\p_5^{\a_5}$.)

Up to a nonzero factor,  $\det(B)\propto Q^4$, where $Q=\sum b^{ij}\p_i\p_j$ is a quadric
in the generators $\p_1+p\p_3,\p_2+q\p_3,\p_4,\p_5$ and hence has rank 4.
The matrix $g^{ij}$ of this bilinear form in the given basis is the inverse to the matrix $g_{ij}$
of the subconformal metric $g$ representing $c_\E$. Thus: 
 \begin{itemize}
\item this system of equation recovers the subconformal structure it describes;
\item the symbol is nondegenerate, i.e.\ $\det B\not\equiv0$ as a function on $T^*M$;  
\item the characteristic variety is a (degenerate quadratic) hypersurface.
 \end{itemize}
Hence system \eqref{W0123} is determined, and, assuming analyticity, 
the generality of its local solutions can be read off from the Cauchy data: $4\cdot2=8$ functions of 4 variables.

To determine functional freedom of zero-curvature subconformal structures,
let us compute the equivalence pseudogroup consisting of transformations that leave our normalizations invariant 
and act as a symmetry on the considered PDE system; a priori the quotient by this symmetry can reduce the
na\"ive count of functional parameters.

Consider the contact vector field $X_f$ on $M_u$, given by 1 generating function $f$ of 5 arguments,
and its action of the most general conformal metric $g$, parametrized by 4 functions
$u,v,w,z$. The flow of $X_f$ preserves the family of such metrics (shape preserving transformation,
cf.\ \cite{KS1,KS2}) if and only if
 $$
L_{X_f}g=c\,g+g'\,\op{mod}\omega^0.
 $$
Here $c$ is a scalar function and $g'$ is the metric in the family with parameters $u,v,w,z$ changed to
parameters $u',v',w',z'$. Taking components and eliminating $c,u',v',w',z'$ one obtains five equations, of
which only two are linearly independent:
 $$
f_{y_1,y_1}=(u^2-v^2)f_{y_2,y_2},\  f_{y_1,y_2}=-uf_{y_2,y_2}.
 $$
Concentrating for a moment on dependence on $y_1,y_2$ only,
the characteristic variety of this system
$\{[\theta]=[\theta_1:\theta_2]\in\mathbb{P}^1:\theta_1^2=(u^2-v^2)\theta_2^2,\theta_1\theta_2=-u\theta_2^2\}$
is empty for $v\neq0$ hence it is of finite type. In other words, $f$ depends on functions of 3 arguments only.

In fact, first prolongation of this system of PDEs is complete in third derivatives by $y_1,y_2$, hence
its solution depends on at most 4 functions of 3 arguments; and indeed it depends exactly on 4 such functions:
the compatibility condition for the above second order system on $f$ (so-called Mayer bracket \cite{KL})
is precisely the component $W_3$ of the curvature, which is one of PDEs in $\{W_i=0\}_{i=0}^3$.
Since local solutions of \eqref{W0123} depend on 8 functions of 4 arguments,
the equivalence pseudogroup (shape preserving transformations) cannot change this count.
 \end{proof}

\subsection{One more example}\label{Sec:3.5}

The following integrable PDE is a direct reduction of the 6D equation by 
Ferapontov-Khusnutdinova \cite{FK}, rewritten in the second order form by Sergyeyev \cite{Se}:
 \begin{equation}\label{5FK}
F=u_5u_{13}-u_3u_{15}+u_5u_{24}-u_4u_{25}=0.
 \end{equation}

The symbol of $F$ equals $\z_F=v_1\cdot v_3+v_2\cdot v_4$ for 
$v_1=\p_1$, $v_2=\p_2$,
$v_3=u_5\p_3-u_3\p_5$,
$v_4=u_5\p_4-u_4\p_5$,
and the nonholonomic distribution is
 $$
\Delta=\langle v_1,v_2,v_3,v_4\rangle=\op{Ker}(\omega^0).
 $$
In coordinates $x=x^1,y=x^2,r=u,p=u_1,q=u_2$ the contact form is canonical
 $$
\omega^0=dr-p\,dx-q\,dy,
 $$
while the subconformal structure (mod $\omega^0$) is canonical in other coordinates 
 $$
g=u_5\cdot \z_F^{-1}=dx^1\cdot dx^3+dx^2\cdot dx^4.
 $$
Passing from these to the Darboux coordinates above brings $c_\E=[g_F]$ to the form 
of Theorem \ref{Th3}, which realizes \eqref{5FK} as a reduction of the master equation \eqref{W0123};
we skip the long explicit formulae.

The $\a$-planes $\langle v_3-\l v_2,v_4+\l v_1\rangle$ give rise to the dispersionless Lax pair:
 $$
\hat\Pi=\langle \p_3-\tfrac{u_3}{u_5}\,\p_5-\l\p_2,\p_4-\tfrac{u_4}{u_5}\,\p_5+\l\p_1\rangle.
 $$

\section{Symmetry reductions and twistor correspondences}\label{Sec:4}

The integrability via dLp can be conveniently described by the double fibration
\begin{center}
\begin{tikzcd}
& \hat{M}_u^6 \arrow[ld, "\hat{\Pi}^2" '] \arrow[rd, "\mathbb{P}^1"]  \\
\mathcal{T}_u^4 && M^5_u
\end{tikzcd}
\end{center}
where the top is the correspondence space and the left-bottom is the twistor space,
i.e.\ the leaf space of the distribution $\hat{\Pi}^2$ on $\hat{M}$.
Note that in the parabolic twistorial picture $M_u^5$ corresponds to a Cartan geometry of type $(A_3,P_{13})$, and 
$\hat{M}_u^6$ has a Cartan geometry of type  $(A_3,P_{123})$.  As discussed at the end of Section \ref{Sec:2.4},  the projection to 
$\mathcal{T}_u^4$ gives a Cartan geometry of type $(A_3,P_2)$  only if the causal structure arises from a  pseudo-conformal structure, i.e.  the 2-dimensional fibers of $\hat M_u\to \cT$ are quadratic, which is equivalent to the vanishing of the torsions $\tau_\pm$. However, the vanishing of $\tau_\pm$ implies flatness of the subconformal structure on $M,$ which in turn induces flat  pseudo-conformal structure on 
$\cT_u.$ As a result, in general,  the causal structure on $\cT_u$ which is encoded by the fibration $\hat M_u\to \cT,$  defines a field of cones on $\cT_u$ that are not quadratic almost everywhere. In this section we discuss other twistor approaches and symmetry reductions.

\subsection{Subconformal geometry in 5D from 3D projective structures}\label{Sec:4.1}

Parabolic twistor correspondence \cite{Cp} is represented as double fibration below,
where $M^5$ of type $(A_3,P_{13})$ with $\delta=+1$ is assumed to be of zero curvature: $W=0$.
The quotient to both sides exists only in the flat case:
the quotient by $L_\pm$ gives a projective geometry on a 3D manifold $N_\pm$ only if $\tau_\mp=0$
and the Weyl curvature of the projective structure on $N_\pm$  is generated by the torsion component $\tau_\pm$ 
by Proposition~\ref{kH} and the parabolic theory of correspondence spaces \cite{Cp}.

\begin{center}
\begin{tikzcd}
& A_3/P_{13} \arrow[ld, "L_-" '] \arrow[rd, "L_+"] & \\
A_3/P_1 && A_3/P_3
\end{tikzcd}
\qquad\qquad
\begin{tikzcd}
& M^5 \arrow[ld, dashed,"\text{if }\tau_+=0"'] \arrow[rd, dashed,"\text{if }\tau_-=0"] & \\
N_{-}^3 && N_{+}^3
\end{tikzcd}
\end{center}
Conversely, given a projective structure in 3D, which can be taken to be either   $N_+$ or $N_-,$ as they are \emph{projectively dual} to each other, 
we can lift it to a subconformal structure in 5D as follows.
A choice of affine symmetric connection $\nabla$ on $N$ induces a connection on the bundle $\pi:T^*N\to N$ and hence 
a splitting $T(T^*N)=H\oplus V$ into a pair of Lagrangian 3-planes, where $V=\op{Ker}(\pi_*)$ and $H$ is the lift of $TN$.

For $M=\PP T^*N$,
it also induces a connection $\bar\nabla$ on the bundle $\bar\pi:M\to N$, which depends 
only on the projective class $[\nabla]$  \cite{Tak}, whence a splitting 
$T_{\bar{x}}M=H'_{\bar{x}}\oplus\bar{V}_{\bar{x}}$ for any $\bar{x}\in M$, 
where $\bar{V}_{\bar{x}}=\op{Ker}(d_{\bar{x}}\bar{\pi})$ and $H'_{\bar{x}}$ is the lift of $T_xN$.
Since $\bar{x}=(x,[p])$ for $p\in T^*_xN$,  the latter space contain the subspace $\bar{H}_{\bar{x}}$
corresponding to $\op{Ann}(p)\subset T_xN$. Thus, we get the splitting of the contact structure:
$\Delta_{\bar{x}}=d_{\bar{x}}\bar{\pi}^{-1}(p)=\bar{H}_{\bar{x}}\oplus\bar{V}_{\bar{x}}$
into a pair of Lagrangian 2-planes, giving the $L_\pm$ planes of the required subconformal structure on $M$.

At the level of the PDE system arising from the zero-curvature condition, the additional condition 
for half of the torsion to vanish results in an overdetermined system of PDEs, whose general solution corresponds 
to generic 3-dimensional projective structure. Thus, we get:

 \begin{theorem}
Projective lift provides explicit solutions of the master equation \eqref{W0123}. Moduli of the corresponding
zero-curvature subconformal structures depend on 12 functions of 3 variables.
 \end{theorem}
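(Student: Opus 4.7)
The plan is to establish the two claims of the theorem in succession: first that the projective lift construction produces genuine zero-curvature subconformal structures (hence, by Theorem~\ref{Th3}, explicit solutions of~\eqref{W0123}), and second that the moduli of such structures has functional dimension $12$ in $3$ variables.

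For the first claim I would proceed through the parabolic twistor diagram displayed before the theorem. Starting with a torsion-free connection $\nabla$ representing a projective class $[\nabla]$ on $N^3$, the induced Ehresmann connection on $\bar\pi\colon M=\PP T^*N\to N$ produces a splitting $\Delta=\bar H\oplus\bar V=L_+\oplus L_-$ of the contact distribution into a conformally dual pair of Lagrangian $2$-planes. By Proposition~\ref{prop:paraCR-subconformal-one-to-one} this is precisely the data of a compatible subconformal structure with $\delta>0$ of almost para-CR type. Because $\bar V$ is the vertical distribution of $\bar\pi$, it is integrable by construction, so one of the torsions $\tau_\pm$ in~\eqref{eq:harmonic-torsions} vanishes automatically, while the other coincides with the projective Weyl tensor of $[\nabla]$ in accordance with Proposition~\ref{kH} and the diagram preceding the theorem statement.

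The central step is the verification of the zero-curvature condition $W=0$. Here I would use the inclusion of parabolics $P_{13}\subset P_1$ in $A_3$: the natural correspondence-space formalism of parabolic geometries (\cite{CS}) equips $M=\PP T^*N$ with a canonical $(A_3,P_{13})$ Cartan geometry whose normal Cartan connection is pulled back from the $(A_3,P_1)$ geometry on $N$. Under the Kostant-theoretic decomposition~\eqref{H2+}, the pullback forces the harmonic curvature on $M$ to take values only in those $\g_0$-irreducible summands arising from the projective Weyl tensor on $N$, which accounts solely for one of $\tau_\pm$; the remaining component $W$ is thereby forced to vanish identically. With $W=0$ established, Theorem~\ref{Th3} guarantees that after straightening the Legendrian foliation $\Pi_\infty$ in Darboux coordinates, the functions $u,v,w,z$ read off the projective lift furnish an explicit local solution of the master system~\eqref{W0123}.

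For the moduli count I would argue as follows. A torsion-free affine connection on $N^3$ has $n^2(n+1)/2=18$ independent local components, each a function of $3$ variables. Projective equivalence $\nabla\mapsto\nabla+\mathrm{Sym}(\alpha\otimes I)$ introduces a $3$-parameter gauge (one $1$-form $\alpha$), reducing the count to $18-3=15$ functions of $3$ variables; quotienting further by the diffeomorphism pseudogroup of $N^3$ (another $3$ functions of $3$ variables) yields $15-3=12$ functions of $3$ variables, matching the claim. Injectivity of the lift on moduli follows from the fact that $[\nabla]$ is recoverable from the lifted subconformal structure as the projective structure on the leaf space of the integrable distribution $\bar V$, together with the connection data encoded by the complementary splitting. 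The main obstacle is the rigorous justification of the module-theoretic step forcing $W=0$, since the correspondence-space argument requires identifying precisely which harmonic components of the $(A_3,P_1)$ Weyl tensor survive under pullback to the $(A_3,P_{13})$ geometry; a direct coordinate verification using~\eqref{MEdLp1}--\eqref{MEdLp3} would serve as a computational cross-check but is less conceptual than the parabolic-geometric route.
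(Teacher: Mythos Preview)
Your proposal is correct and follows essentially the same route as the paper. The paper's argument is terse: it invokes the parabolic twistor diagram and Proposition~\ref{kH} to assert that the lift of an $(A_3,P_1)$ geometry to $M=\PP T^*N$ has harmonic curvature equal to the projective Weyl tensor alone (hence $W=0$), then gives the explicit coordinate formulae for $H=\langle\eta_1,\eta_2\rangle$ and $V=\langle\partial_{p_1},\partial_{p_2}\rangle$, and finally counts $15$ Thomas symbols minus $3$ coordinate normalizations to reach $12$. Your version makes the correspondence-space step more explicit via \v{C}ap's theorem (which is indeed the content of the cited reference~[\v{C}05]), and your moduli count $18-3-3=12$ is the same arithmetic with the projective gauge separated out rather than absorbed into Thomas symbols from the start; these are cosmetic differences. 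Your remark on injectivity of the lift (recovering $[\nabla]$ from the leaf space of the integrable $\bar V$) is a point the paper leaves implicit but which is needed to identify the moduli of projective structures with the moduli of the resulting subconformal structures.
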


 \begin{remark}
This twistor reduction can be thought of as a 5-dimensional analogue of the so-called
Dunajski-West construction and its generalization due to Calderbank \cite{DW,C2} wherein self-dual 4-manifolds 
with a null conformal Killing vector are shown to have a foliation by null surfaces containing the conformal Killing
field. Consequently, the 2-dimensional leaf space of such null surfaces is   equipped with a projective structure.
 \end{remark}

In coordinates, the projective connection is given through a representative symmetric affine connection with
Christoffel symbols $\Gamma_{ij}^k$ depending on coordinates $(x^0,x^1,x^2)$ of $N$.
Projective transformations $\Gamma_{ij}^k\mapsto\Gamma_{ij}^k+\frac12(\Upsilon_i\delta_j^k+\Upsilon_j\delta_i^k)$,
depending on arbitrary 1-form $\Upsilon_i$,
leave  the Thomas symbols $\Pi^k_{ij}=\Gamma_{ij}^k-\frac14(\Gamma_{li}^l\delta_j^k+\Gamma_{lj}^l\delta_i^k)$ invariant. Note that $\Pi^k_{kj}=0.$

Alternatively, using the lift of $(A_3,P_1)$ structure to $(A_3,P_{12})$ structure,  a projective 
connection can be considered as the second order ODE system of the type (derivation by $t=x^0$)
 \begin{align*}
\ddot{x}^1=
&\,\, \Pi^0_{11}(\dot{x}^1)^3+2\Pi^0_{12}(\dot{x}^1)^2\dot{x}^2+\Pi^0_{22}\dot{x}^1(\dot{x}^2)^2
	+(2\Pi^0_{01}-\Pi^1_{11})(\dot{x}^1)^2\\
& +2(\Pi^0_{02}-\Pi^1_{12})\dot{x}^1\dot{x}^2-\Pi^1_{22}(\dot{x}^2)^2
	+(\Pi^0_{00}-2\Pi^1_{01})\dot{x}^1-2\Pi^1_{02}\dot{x}^2-\Pi^1_{00},\\
\ddot{x}^2=
&\,\, \Pi^0_{11}(\dot{x}^1)^2\dot{x}^2+2\Pi^0_{12}\dot{x}^1(\dot{x}^2)^2+\Pi^0_{22}(\dot{x}^2)^3
	-\Pi^2_{11}(\dot{x}^1)^2+2(\Pi^0_{01}-\Pi^2_{12})\dot{x}^1\dot{x}^2\\
& +(2\Pi^0_{02}-\Pi^2_{22})(\dot{x}^2)^2-2\Pi^2_{01}\dot{x}^1
	+(\Pi^0_{00}-2\Pi^2_{02})\dot{x}^2-\Pi^2_{00}.
 \end{align*}
Choosing local cordinates $[p]=[-1:p_1:p_2]$ in the fiber $\PP T^*_xN$ we get local coordinates $(t,x^1,x^2,p_1,p_2)$ on $M$,
in which a contact form is expressed as $\omega^0=dt-p_1dx^1-p_2dx^2$ and the contact Legendrian structure $L_\pm$ is given by
2-distributions  
 \begin{equation}
H=\langle\eta_1,\eta_2\rangle,\qquad V=\langle\p_{p_1},\p_{p_2}\rangle,
 \end{equation}
where $V$ is integrable and  
 \begin{multline*}
\eta_i=\p_{x^i}+p_i\p_t-\Pi^k_{00}p_ip_jp_k\p_{p_j}-\sum_{k\neq i}\Pi^k_{0i}p_jp_k\p_{p_j}
-\sum_{k\neq j}\Pi^k_{0j}p_ip_k\p_{p_j}+(\Pi^0_{00}-\Pi^k_{0k})p_ip_j\p_{p_j}\\
-\sum_{k\neq i,j}\Pi^k_{ij}p_k\p_{p_j}+\sum_{j\neq i}(\Pi^0_{0j}-\Pi^i_{ij})p_i\p_{p_j}
+\sum_{j\neq i}(\Pi^0_{0i}-\Pi^j_{ij})p_j\p_{p_j}+(2\Pi^0_{0i}-\Pi^i_{ii})p_i\p_{p_i}+\Pi^0_{ij}\p_{p_j}.
 \end{multline*}
 in which the summation is over repeated indices  $1\leq j,k\leq 2,$ with no summation  over $i=1,2$.
 
Denoting $\omega^1,\dots,\omega^4$ the coframe (mod $\omega^0$) dual to the contact frame
$\eta_1,\eta_2,\p_{p_1},\p_{p_2}$ we obtain formulae \eqref{gg} and \eqref{Om-} for the
subconformal structure $(M,\Delta,g)$. 

Note that the coordinate freedom can constrain 3 out of 15 coefficients, e.g.\ $\Pi^0_{00}=\Pi^0_{01}=\Pi^0_{02}=0$;
the remaining 12 Thomas's coefficients give functional parameter on the moduli space.

\subsection{Symmetry reductions and integrable systems in 3D and 4D}\label{Sec:4.2}


Let us assume that a subconformal contact structure $(M,\Delta,[g])$ has an infinitesimal symmetry $\zeta\in\Gamma(TM)$. 
Since $\Delta$ is contact, $\zeta$ is transverse to $\Delta$ almost everywhere, so localizing in $M$, we can assume 
the leaf space of the integral curves of $\zeta$ to be a 4-manifold $Q$ with the  fibration $q:M\to Q$. 

For $x\in M$ we can identify $\Delta_x\simeq T_{q(x)}Q$, and thus $Q$ gets equipped with conformal
metric $[\tg]=q_*[g]$ and conformal symplectic structure $[\tOmega]=q_*[\Omega]$, as well as 
the endomorphism $\tJ=\tg^{-1}\tOmega=q_*J$ satisfying $\tJ^2=\delta\1$, where $\delta=\pm1$.

A contact form on $M$ can be fixed by the relation 
 \begin{equation}\label{omc}
\omega^0(\zeta)=c\ \text{ for }\ c\in\RR_\times.
 \end{equation} 
This induces a homothety class of closed 2-forms $\tOmega$ on $Q$ by $q^*\tOmega=d\omega^0$. 
As a result, the 4-manifold $Q$ is conformally symplectic and has  a homothety class of \emph{almost $\delta$-K\"ahler} of  
neutral signature, i.e.  almost pseudo-K\"ahler for $\delta<0$ and almost para-K\"ahler for $\delta>0$.
The converse statement is true together with the integrability constraint:

 \begin{theorem}\label{ThmCont}
There is a bijective local correspondence between zero-curvature subconformal contact structures in 5D
with an infinitesimal symmetry and (homothety classes of) self-dual almost (pseudo/para-)K\"ahler structures in 4D.
 \end{theorem}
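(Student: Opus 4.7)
Starting from $(M,\Delta,[g])$ with infinitesimal symmetry $\zeta$, transversality of $\zeta$ to $\Delta$ at generic points lets me normalize $\omega^0(\zeta)=c$ as in \eqref{omc}; combined with $\mathcal{L}_\zeta \omega^0\in\RR\omega^0$ (which holds since $\zeta$ preserves $\Delta$), this forces $\mathcal{L}_\zeta\omega^0=0$, hence $i_\zeta d\omega^0=0$. Thus $d\omega^0$ is basic and descends to a closed 2-form $\tilde\Omega$ on the local quotient $Q=M/\zeta$. The tensors $g|_\Delta$ and $J=g^{-1}d\omega^0$ are $\zeta$-invariant and $\zeta$-horizontal, so they descend to $\tilde g,\tilde J$ on $Q$, inheriting $\tilde g(\tilde J\cdot,\cdot)=\tilde\Omega$, $\tilde J^2=\delta\mathbf{1}$, and $\tilde g(\tilde J\cdot,\cdot)+\tilde g(\cdot,\tilde J\cdot)=0$. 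Together with $d\tilde\Omega=0$ this is exactly an almost K\"ahler ($\delta<0$) or almost para-K\"ahler ($\delta>0$) structure of neutral signature on $Q$, the homothety ambiguity being inherited from rescalings $\omega^0\mapsto c\,\omega^0$.

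\textbf{Curvature correspondence.} The central step is to identify $W=0$ on $M$ with self-duality of $[\tilde g]$ on $Q$. Since $\zeta$ is an infinitesimal symmetry of the parabolic geometry, the harmonic curvature $W$ from \eqref{eq:hamornic-curvature} is $\zeta$-invariant and annihilated by contraction with $\zeta$ (it lives in tensor powers of $L_\pm^*$ and line bundles, while $L_\pm\subset\Delta$ are $\zeta$-horizontal). Hence $W$ descends to a tensor $\tilde W$ on $Q$. Under the identification of $L_\pm$ with the two families of null Lagrangian 2-planes in $T_xQ$, the decomposition in \eqref{eq:hamornic-curvature} matches the standard splitting of the conformal Weyl tensor into $W^+\oplus W^-$. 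Equivalently and more conceptually, by Section \ref{Sec:2.4} the condition $W=0$ is Frobenius integrability of the $\a$-congruence $\hat\Pi$ on $\hat M$, which after $\zeta$-reduction projects to the $\a$-plane congruence on $Q$; Penrose's nonlinear graviton theorem then states that integrability of the latter is equivalent to self-duality of $(Q,[\tilde g])$.

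\textbf{Converse construction.} Given a self-dual almost (para-)K\"ahler $(Q,\tilde g,\tilde\Omega,\tilde J)$, I reconstruct $M$ locally as an $\RR$-bundle $q\colon M\to Q$ carrying a connection 1-form $\omega^0$ with $d\omega^0=q^*\tilde\Omega$; existence is guaranteed by $d\tilde\Omega=0$, and the gauge freedom matches the homothety class. The distribution $\Delta=\ker\omega^0$ is contact by nondegeneracy of $\tilde\Omega$, and the pullbacks of $\tilde g$ and $\tilde J$ define a compatible subconformal contact structure on $(M,\Delta)$. The vertical vector field $\zeta$ tautologically satisfies $\omega^0(\zeta)=1$ and is an infinitesimal symmetry. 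Applying the curvature correspondence of the previous paragraph in reverse, self-duality on $Q$ forces $W=0$ on $M$, closing the loop.

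\textbf{Main obstacle.} The delicate point is the precise matching of harmonic curvatures. On $M$ the parabolic geometry has invariants $\tau_-+\tau_++W$ (Proposition \ref{kH}), while 4D conformal geometry has only the Weyl pieces $W^++W^-$ with no torsion. The torsions $\tau_\pm$ do not simply drop out under the $\zeta$-quotient; they are absorbed into the Nijenhuis tensor of $\tilde J$ and into the almost (as opposed to integrable) K\"ahler condition on $Q$, so that only the $W$-component appears as a conformal invariant on $Q$. A careful implementation would therefore proceed either by direct coframe computation in a $\zeta$-adapted frame extending Section \ref{Sec:2.4}, or more conceptually by passing through the correspondence space $\hat M$ as in Section \ref{Sec:4.1} and invoking functoriality of harmonic curvature under correspondence-space quotients of parabolic geometries; the second route also clarifies why only $W$, and not $\tau_\pm$, obstructs the descent to a conformal self-duality condition on $Q$.
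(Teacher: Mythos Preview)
Your overall strategy coincides with the paper's: forward direction by quotienting along the symmetry, inverse by contactification, and the curvature correspondence via $\alpha$-surfaces and Penrose's theorem. The paper states the last point in one line (``follows from the existence of $\alpha$-surfaces'') and then carries out the Cartan-theoretic bookkeeping explicitly, relating $\tilde\psi$ to $\psi$ via \eqref{eq:contactify-coframe} and confirming $\tau_\pm=q^*\tilde\tau_\pm$, which matches your ``Main obstacle'' paragraph.

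There is, however, a wrong assertion in your curvature paragraph: the two summands in \eqref{eq:hamornic-curvature} do \emph{not} correspond to $W^+\oplus W^-$. Both carry the \emph{same} coefficients $(W_0,\dots,W_4)$ and together form a single $\mathfrak{sl}(2,\RR)$-irreducible $\mathbb{V}_4$, not two independent halves of a 4D Weyl tensor. Relatedly, $L_\pm$ are not ``the two families of null Lagrangian 2-planes'' in $T_xQ$; they are two distinguished members of the $\beta$-family (see the discussion after \eqref{gPi}). So that attempted direct identification cannot work as written. This is harmless for the proof because your alternative $\alpha$-surface argument is the correct one and is exactly what the paper invokes. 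The only point you should make explicit is why $\alpha$-surfaces pass bijectively through $q$: they sit in $\Delta$ hence are transverse to $\zeta$ and project injectively, while the horizontal lift along $\omega^0$ of an $\alpha$-surface in $Q$ is integrable because $\alpha$-planes are $\Omega$-Lagrangian, so $d\omega^0$ restricts to zero on them.
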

 
The inverse construction is given by the so-called \emph{contactification}  and the correspondence between zero-curvature and 
self-duality condition follows from the existence of $\alpha$-surfaces. Below we briefly describe the construction. 

The homothety class of an almost para-K\"ahler structure on $Q$ defines a Cartan geometry $(\cK\to Q,\tpsi)$ of type 
$(\RR_\times GL(2,\RR)\ltimes\RR^4,\RR_\times GL(2,\RR))$, 
where $\RR_\times GL(2,\RR)\subset GL(4,\RR)$ was described in Corollary \ref{crGL}.
The two cases $\delta<0$ and $\delta>0$ involve different representations but are similar, so we proceed with the latter.  
The Cartan connection takes the form
 \begin{equation}\label{eq:pKahler-homothey}
  \begin{aligned}    
\tpsi&= \begin{pmatrix}
    \tilde\rho-\tphi_0 & \txi_5 & 0 & 0 \\
    \tomega^5 & \tilde\rho-\tphi_1 & 0 & 0 \\
    \tomega^4 & \tomega^3 & \tilde\rho-\tphi_2 & 0 \\
    \tomega^1 & -\tomega^2 & 0 & \tilde\rho+\tphi_2 \end{pmatrix},\quad
\tilde\rho=\tfrac 14(\tphi_0+\tphi_1).
  \end{aligned}
 \end{equation}

 \begin{remark}
Fixing $c=1$ in \eqref{omc} results in $\tilde\rho=0$ and corresponds to a representative of the homothety class in Theorem \ref{ThmCont}. 
In the treatment of \cite{CaSa} the authors keep the homothety factor so that the structure group $G_0$ of the symmetry reduction remains the same as that of the corresponding parabolic geometry in order to exploit the parabolic theory of Weyl connections. 
 \end{remark}

From now on we reduce the scaling factor $\RR_\times$ in the structure group, as in the remark above, which implies $\tilde\phi_0=-\tilde\phi_1$ in \eqref{eq:pKahler-homothey}.
The Cartan connection and curvature are given by:
 \begin{equation}\label{eq:PCS-conn-curv}
  \begin{aligned}    
\tpsi&= \begin{pmatrix}
    \tphi_1 & \txi_5 & 0 & 0 \\
    \tomega^5 & -\tphi_1 & 0 & 0 \\
    \tomega^4 & \tomega^3 & -\tphi_2 & 0 \\
    \tomega^1 & -\tomega^2 & 0 & \tphi_2 \end{pmatrix},\
\tilde\Psi= \begin{pmatrix}
    \tPhi_1 & \tXi_5 & 0 & 0 \\
    \tOmega^5 & -\tPhi_1 & 0 & 0 \\
    \tOmega^4 & \tOmega^3 & -\tPhi_2 & 0 \\
    \tOmega^1 & -\tOmega^2 & 0 & \tPhi_2 \end{pmatrix}
  \end{aligned}
 \end{equation}
where
 \begin{equation}\label{eq:PCS-streq}
  \begin{aligned}
\tOmega^1&=t^1_-\tomega^3\w\tomega^4,\quad 
\tOmega^2=t^2_-\tomega^3\w\tomega^4,\quad 
\tOmega^3=t^1_+\tomega^1\w\tomega^2,\quad 
\tOmega^4=t^2_+\tomega^1\w\tomega^2,\\
\tOmega^5&=-s_0(\tomega^1\w\tomega^3+\tomega^2\w\tomega^4) -2r\tomega^1\w\tomega^4-P_{22}
	\tomega^1\w\tomega^2-P_{33}\tomega^3\w\tomega^4,\\ 
\tPhi_1 &=-2r\tomega^2\w\tomega^4+(r+s_1)(\tomega^1\w\tomega^3+\tomega^2\w\tomega^4)
	+P_{12}\tomega^1\w\tomega^2-P_{34}\tomega^3\w\tomega^4,\\    
\tPhi_2&=C_1\tomega^1\w\tomega^2-(3r+s_1-t^2_-t^2_+)\tomega^1\w\tomega^3 +(t^2_-t^1_++s_2)\tomega^1\w\tomega^4\\
    &\ +(t_-^1t_+^2+s_0)\tomega^2\w\tomega^3 +(3r-s_1-t^1_-t^1_+)\tomega^2\w\tomega^4+C_3\tomega^3\w\tomega^4,\\
\tXi_5&=-s_2(\tomega^1\w\tomega^3+\tomega^2\w\tomega^4)-2r\tomega^2\w\tomega^3+P_{11}
	\tomega^1\w\tomega^2+r\tomega^2\w\tomega^3+P_{44}\tomega^3\w\tomega^4
   \end{aligned}
  \end{equation}
for some functions $t^1_-,t^2_-,t^1_+,t^2_+,s_0,s_1,s_2,r$ and $C_1,C_3,P_{ij}$ on $\cK$.

The fundamental invariants of such Cartan geometries are  $\ttau_\pm,S,R$ given by
 \begin{equation}\label{eq:tildetau}
  \begin{gathered}
\ttau_{+}=(t^1_+\partial_{\tomega^3}+t^2_+\partial_{\tomega^4})\otimes (\tomega^1\w\tomega^2),\quad
\ttau_-= (t^1_-\partial_{\tomega^1}+t^2_-\partial_{\tomega^2})\otimes (\tomega^3\w\tomega^4),\\
\!\!S=(s_0(\omega^2)^2+2s_1\omega^2\omega^1-s_2(\omega^1)^2)\otimes (\tomega^1\w\tomega^2)^{1/2}\otimes 
(\tomega^3\w\tomega^4)^{-1/2},\quad
R=r\tOmega.
  \end{gathered}
 \end{equation}
We also have a closed 2-form
 \[
\tOmega=\tomega^1\w\tomega^3+\tomega^2\w\tomega^4.
 \]
The conformal structure of the  homothetic representative  is
 \begin{equation}\label{ttg}
\tg_0=\tomega^1\tomega^3+\tomega^2\tomega^4
 \end{equation}
for which the Schouten tensor in the coframe $(\tomega^1,\cdots,\tomega^4)$ is given by 
the quantities $P_{ij}$ in \eqref{eq:PCS-streq}, where 
 \begin{equation*}\label{eq:Rho-Tensor}
s_0=P_{23},\quad s_1=\half(P_{13}-P_{24}),\quad s_2=P_{14},\quad r=\tfrac{1}{2}(P_{13}+P_{24}).
 \end{equation*}
The remaining entries of  the Schouten tensor are derived from the torsions $\ttau_\pm$ as follows:
   \begin{equation}
     \label{eq:Rest-Of-Schouten}
     \begin{gathered}
       P_{11}=-\partial_{\tomega^4}t_+^1,\quad P_{22}=\partial_{\tomega^3}t_+^2,\quad
       P_{21}=\half(\partial_{\tomega^3}t_+^1-\partial_{\tomega^4}t_+^2),\quad
       C_1= \half(\partial_{\tomega^3}t_+^1+\partial_{\tomega^4}t_+^2),\\
       P_{33}=-\partial_{\tomega^2}t_-^1,\quad P_{44}=\partial_{\tomega^1}t_-^2,\quad
       P_{43}=\half(\partial_{\tomega^1}t_-^1-\partial_{\tomega^2}t_-^2),\quad
       C_3= -\half(\partial_{\tomega^1}t_-^1+\partial_{\tomega^2}t_-^2).
     \end{gathered}
        \end{equation}
Note that the fundamental invariant $S$ in \eqref{eq:tildetau} is a $\mathrm{GL}(2,\RR)$-component part 
of the Ricci curvature of the almost para-K\"ahler metric $\tg_0$ in \eqref{ttg}, and the scalar curvature is $24r$.
The anti-self-dual Weyl curvature of $\tg_0$ is zero; the self-dual part is represented by the quartic
  \begin{equation}\label{eq:Self-dual-Weyl-Curvature}
(t^2_{-;3}-t^1_{-;4})\lambda^4+4C_3\lambda^3+6(t^1_-t^1_++t^2_-t^2_+-2r)\lambda^2+4C_1\lambda +t^2_{+;1}-t^1_{+;2}
  \end{equation}
on the bundle of $\a$-planes \eqref{gPi} with parameter $\lambda\in\RR\cup\{\infty\}$, 
where $t^k_{\ve;l}=\p_{\tilde\omega^l}t^k_{\ve}$.

To contactify, take a local nonvanishing primitive 1-form $\tomega^0$ of the closed representative  $\tOmega\in[\tOmega]$, i.e.\ $d\tomega^0=\tOmega$. Defining $M=Q\times \RR$ with projection $q: M\to Q$, 
the 1-form $\beta=d t+q^*\tomega^0$ satisfies $d\beta=q^*\tOmega$, whence $\Delta:=\op{Ker}\beta\subset TM$ is a contact distribution. Moreover, by the construction the splitting $TQ=\tL_-\oplus \tL_+$ induces a splitting on  $\Delta$ with the appropriate compatibility conditions for the induced (para-)complex structure. 
This shows that $M$ is equipped with a compatible subconformal contact structure with infinitesimal symmetry $\p_t$. 

To relate the Cartan connections $\tpsi$ and $\psi,$  let us denote by $\hat q\colon\hat\cK\to M$ the pull-back bundle 
$q^*\cK$ of the principal bundle $\cK\to Q$ using the projection $q: M\to Q$.
Let $\omega^0$ denote the lift of the primitive 1-form $\beta$ to $\hat\cK$ by the scaling action.  
The symmetry reduction gives an inclusion $\iota:\hat\cK\to\cG$, where $(\cG\to M,\psi)$ is the Cartan geometry for the corresponding subconformal contact 5-manifold.  Using the expressions \eqref{eq:PCS-conn-curv} and \eqref{eq:A3P123}, one has
 \begin{equation}\label{eq:contactify-coframe}
  \begin{gathered}
\omega^a=\tomega^a\ (0\leq a\leq 4),\quad \omega^5=\tomega^5+\half s_0\omega^0,\quad 
\phi_1=\tphi_1-\half s_1\omega^0,\\
    \phi_2=\tphi_2+(\tfrac 16t^1_-t^1_++\tfrac 16t^2_-t^2_+-r)\omega^0,\quad \xi_5=\txi_5+\half s_2\omega^0.
  \end{gathered}
\end{equation}
wherein we have  suppressed $\iota^*$ on the left hand side and $\hat q^*$ on the right hand side.  

Using the relations above one  can find the Schouten tensor of the reduced subconformal geometry. More precisely, defining the Schouten tensor as $\xi_i=Q_{ij}\omega^j,$  one obtains that the symmetric part $Q_{(ij)}$, $1\leq i,j\leq 4$, is given by
 \[
\begin{gathered}
Q_{(11)}=\half P_{11},\quad Q_{(12)}=\half P_{21},\quad Q_{(22)}=\half P_{22},\quad Q_{(33)}=\half P_{33},\quad 
Q_{(34)}=\half P_{43},\quad Q_{(44)}=\half P_{44}\\
Q_{(13)}= r+\half s_1+\tfrac{1}{24}(t_-^1t_+^1-5t_-^2t_+^2),\quad Q_{(24)}=r-\half s_1+\tfrac{1}{24}(t_-^2t_+^2-5t_-^1t_+^1)\\
Q_{(23)}=\half s_0+\tfrac 14 t_-^1t_+^2,\quad Q_{(14)}=\half s_2+\tfrac 14 t_-^2t_+^1
   \end{gathered}
 \]
and the only non-zero entries of the skew-symmetric part $Q_{[ij]}$ for $1\leq i,j\leq 4$ are
 \[
Q_{[12]}=\tfrac{1}{4}C_1,\quad Q_{[43]}=\tfrac 14C_3.
 \]
The expressions of the entries $Q_{i0}$ and $Q_{0i}$ involved first and second derivative of the fundamental invariants of 
the almost para-K\"ahler structure. Since their expressions are  long and will not be important for us, we will not provide them. 
Using  relations \eqref{eq:contactify-coframe}, one can immediately relate  invariants \eqref{eq:harmonic-torsions} 
to \eqref{eq:tildetau} by
 \[
\tau_\pm= q^*\ttau_\pm.
 \]
An interpretation for the vanishing of $S$ was given in \cite{MS}, using the natural lift $\hat\zeta$
of the infinitesimal symmetry $\zeta$ to its correspondence space  $\hat M_u$ which would be its corresponding causal structure as discussed at the end of Section \ref{Sec:2.4}. It was shown that the infinitesimal symmetry $\hat\zeta$ is \emph{null} with respect to the canonical symmetric bilinear form 
$h=\omega^0\omega^5-\omega^1\omega^4$ of the causal structure in terms of \eqref{eq:A3P123} if and only if the induced structure on $Q$ satisfies $S=0$.

On the level of differential equations and their canonical subconformal structure on solutions,
vanishing of the fundamental invariants is a constraint, not changing the integrability. 
For instance, if in addition to the symmetry on $M_u$ we impose vanishing of the
Ricci curvature of $\tg_0$, namely $P_{ij}=0$ for all $0\leq i,j\leq 4$, then we get
the Pleba\'nski equation (first or second depending on coordinization) describing self-dual gravity \cite{Pb,DFK}.
On the other hand, fewer constraints yields reductions to other geometries as will be discussed in the next section.

Symmetry reduction of integrable PDEs in 5D clearly gives an integrable PDE in 4D, for instance by the results of \cite{FK,CK}.
Imposing more symmetries on $M_u$, we get lower-dimensional reductions, as in \cite{C}. In particular, 
reduction by a 2-dimensional Abelian symmetry group whose intersection with the contact distribution is not null with respect to the indefinite metric on $\Delta$ yields integrable background geometry in 3D, which is Einstein-Weyl. 
Split as two succesive 1-dimensional symmetry reductions, this yields the Jones-Tod correspondence \cite{JT}. 
Again, vanishing of extra invariants leads to further reductions, e.g.\ almost para-K\"ahler 4-manifolds with $\ttau_-=0$ 
give symmetry reduction of 3-dimensional projective structures, which can be viewed as an analogue of the Dunajski-West construction \cite{DW}.
(The local generality of such almost para-K\"ahler structures is 12 functions of 2 variables.)


\subsection{Further reductions: nested Lax sequences}\label{Sec:4.3}

In this section let us restrict to the case $\delta>0$.
We consider a subconformal structure $(M,\Delta,[g])$ with infinitesimal symmetry $\zeta$ and its symmetry reduction, 
namely a homothety class almost para-K\"ahler 4-manifold $(Q,[\tilde g],\tilde J)$, 
with various overdeterminations of the zero-curvature condition. 

We interpret such vanishing conditions as the extendability of the Lax pair to a Lax triple, Lax quadruple, etc.
Since a differential subsystem of an integrable system is an integrable system itself, 
we obtain overdetermined systems that are integrable via Lax distributions of higher rank.
The candidates for these distributions are as follows. 

Consider at first the reduced space $Q$ and its correspondence space $\hat{Q}$, i.e.\ the bundle of $\alpha$-planes 
and the projection $\pi_Q:\hat{Q}\to Q$ with $\PP^1$-fibers. The $\alpha$-planes $\langle\tilde{l}_-,\tilde{l}_+\rangle$,
where $\tilde{l}_\pm\subset\tilde{L}_\pm$, $\tilde{l}_-\perp\tilde{l}_+$, lift to 
the rank 2 distribution $\tilde{\Pi}^2\subset T\hat{Q}$  defined using the Levi-Civita connection
of $\tilde{g}$; this lift is independent of the representative metric in the homothety/conformal class.
Similarly, the 3-planes $\langle\tilde{l}_-\rangle\oplus\tilde{L}_+$ parametrized by $\lambda\in\PP^1$
lift to the rank 3 distribution $\tilde{\Pi}^3\subset T\hat{Q}$. Finally the tangent bundle $TQ$ 
lifts via the Levi-Civita connection to the rank 4 distribution $\tilde{\Pi}^4\subset T\hat{Q}$. 

 \begin{proposition}
Frobenius integrability of $\tilde{\Pi}^2$ is equivalent to the self-duality of $\tilde{g}$. Assuming self-duality,
$\tilde{\Pi}^3$ is integrable if and only if  we have $\tau_-=0$ and $S=0$. 
Assuming the integrability of $\tilde\Pi^2$ and $\tilde\Pi^3,$ then
$\tilde{\Pi}^4$ is integrable if and only if the  Ricci curvature vanishes.
 \end{proposition}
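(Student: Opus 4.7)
I would represent each rank-$k$ distribution $\tilde\Pi^k\subset T\hat Q$, with $\dim\hat Q=5$, as the annihilator of an explicit Pfaffian system on the correspondence space, and extract the Frobenius obstructions directly from the structure equations \eqref{eq:PCS-streq}. Let $\sigma_1=\lambda\tomega^1-\tomega^2$ and $\sigma_2=\lambda\tomega^4+\tomega^3$ annihilate the $\alpha$-plane $\Pi_\lambda=\langle\p_{\tomega^1}+\lambda\p_{\tomega^2},\,\lambda\p_{\tomega^3}-\p_{\tomega^4}\rangle\subset TQ$, and let $\sigma_0=d\lambda+\alpha(\lambda)$ be the vertical 1-form in which $\alpha(\lambda)$ is polynomial in $\lambda$ and assembled from the $\mathfrak{sl}(2,\RR)$-connection forms $\tphi_1,\tomega^5,\txi_5$ via the horizontal lift of the M\"obius action on the $\PP^1$-fibers. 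Then the three lifts become $\tilde\Pi^2=\op{Ann}\{\sigma_0,\sigma_1,\sigma_2\}$, $\tilde\Pi^3=\op{Ann}\{\sigma_0,\sigma_1\}$, and $\tilde\Pi^4=\op{Ann}\{\sigma_0\}$.

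For the first equivalence, I would compute $d\sigma_1,d\sigma_2$ modulo $\{\sigma_0,\sigma_1,\sigma_2\}$ using \eqref{eq:PCS-streq}: the torsion contributions from $\tOmega^1,\ldots,\tOmega^4$ are absorbed by $\sigma_1,\sigma_2$ after multiplication by $\lambda$, so the residual obstruction lies in $d\sigma_0$, in which $\tOmega^5,\tPhi_1,\tXi_5$ recombine into the quartic \eqref{eq:Self-dual-Weyl-Curvature} in $\lambda$. Since the anti-self-dual Weyl tensor of $\tg$ vanishes identically by the structure of the Cartan geometry, as noted after \eqref{eq:Rest-Of-Schouten}, vanishing of this quartic for all $\lambda$ is exactly self-duality of $\tg$; this reproduces the Penrose-Atiyah-Hitchin-Singer correspondence in Cartan-theoretic form.

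For the second equivalence, under self-duality the remaining Frobenius obstructions for $\tilde\Pi^3$ are $d\sigma_1\w\sigma_0\w\sigma_1=0$ and $d\sigma_0\w\sigma_0\w\sigma_1=0$. Substituting $\tOmega^1=t^1_-\tomega^3\w\tomega^4$ and $\tOmega^2=t^2_-\tomega^3\w\tomega^4$ into the first gives a $\lambda$-linear expression in $t^1_-,t^2_-$, forcing $\ttau_-=0$. The second isolates precisely the components of $\tOmega^5,\tPhi_1,\tXi_5$ containing $s_0,s_1,s_2$, which recombine into the single $\g_0$-irreducible summand $S$ of \eqref{eq:tildetau}. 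For the third equivalence, $\tilde\Pi^4$ is integrable iff $d\sigma_0\w\sigma_0=0$; after the previous vanishings the surviving part of $d\sigma_0$ consists of the Schouten-tensor entries $P_{ij}$ of \eqref{eq:Rho-Tensor}-\eqref{eq:Rest-Of-Schouten} together with the scalar term $r$ from $R=r\tOmega$, whose simultaneous vanishing is the Ricci-flatness of $\tg$.

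\textbf{Main obstacle.} The principal difficulty is bookkeeping: at each stage the structure equations leave curvature residues that must be cleanly separated from previously killed terms. The cleanest route is representation-theoretic — each Frobenius obstruction isolates an irreducible $\g_0$-submodule of the harmonic curvature $H^2_+$ for the reduced parabolic $\RR_\times GL(2,\RR)\ltimes\RR^4$, so the match with $\ttau_\pm, S, R$ in \eqref{eq:tildetau} is automatic once the generators $\sigma_i$ are correctly identified. A direct computation on $\hat Q$ using \eqref{eq:PCS-streq} is a feasible alternative but lengthy.
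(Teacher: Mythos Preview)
Your approach is correct and ultimately rests on the same structure equations \eqref{eq:PCS-streq} as the paper's proof, but the packaging differs. The paper works directly on the Cartan bundle $\mathcal{K}$, where the three distributions pull back to the annihilators of the simple Pfaffian systems $\{\tomega^1,\tomega^4,\tomega^5\}$, $\{\tomega^1,\tomega^5\}$, and $\{\tomega^5\}$; the Frobenius obstructions are then read off from $\tOmega^1,\tOmega^4,\tOmega^5$ without any $\lambda$-dependence, because the spectral parameter has been absorbed into the bundle structure (the $SL(2,\RR)$-action normalizes the chosen $\alpha$-plane to a fixed position, effectively $\lambda=\infty$). Your route on $\hat Q$ with explicit $\lambda$-dependent forms $\sigma_i$ recovers the same obstructions as coefficients of $\lambda$-polynomials, which is more laborious but makes the role of the spectral parameter visible. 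The representation-theoretic shortcut you sketch in your obstacle paragraph --- matching each Frobenius residue to an irreducible $\g_0$-submodule --- is essentially how the paper's terse argument implicitly works: the single-point conditions $t^1_-=0$, $s_0=0$, $P_{33}=0$ obtained on $\mathcal{K}$ propagate under the structure group of $\mathcal{K}\to\hat Q$ to the full invariant conditions $\ttau_-=0$, $S=0$, and similarly at the next stage to $\op{Ric}_{\tg}=0$. Either presentation is adequate; the paper's buys brevity, yours buys explicitness.
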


 \begin{proof}
In terms of Cartan connection \eqref{eq:PCS-conn-curv} on the principal bundle $\pi_{\hat Q}\colon\cK\to \hat Q$ we have
$\tilde{\Pi}^2=\pi_{\hat Q *}\langle\tomega^1,\tomega^4,\tomega^5\rangle^\perp$,
$\tilde{\Pi}^3=\pi_{\hat Q *}\langle\tomega^1,\tomega^5\rangle^\perp$ and
$\tilde{\Pi}^4=\pi_{\hat Q *}\langle\tomega^5\rangle^\perp$. 
The first claim on the Frobenius condition for $\tilde{\Pi}^2$ is well-known \cite{Pe}.
Assuming self-duality, the conditions for this Lax pair to be extendable to the Lax triple are straightforward: 
using \eqref{eq:PCS-streq} and \eqref{eq:Rest-Of-Schouten}, the  Frobenius integrability 
of $\tilde{\Pi}^3$ is  equivalent to $\tau_-=0$ and $S=0$. 
Similarly, assuming  $\tau_-=S=0$, the condition $\op{Ric}_{\tilde{g}}=0$ is necessary and sufficient  
for the rank 4 distribution $\tilde{\Pi}^4$ to be a Lax quadruple. 
 \end{proof}

 \begin{remark}\label{rmk:Petrov-type-Q}
Note that by relation \eqref{eq:Rest-Of-Schouten},  when $\tau_-=0$ then the binary quartic \eqref{eq:Self-dual-Weyl-Curvature} has a repeated root of multiplicity at least 2 at $\lambda=\infty$. The condition $\tau_-=0$ and $\op{Ric}_{\tilde{g}}=0$ implies that  \eqref{eq:Self-dual-Weyl-Curvature} has a repeated root of multiplicity at least 3. 
 \end{remark}
Now consider the subconformal structure with the projection $q:M\to Q$ and the corresponding projection
of the correspondence spaces $\hat{q}:\hat{M}\to\hat{Q}$. We define $\hat{\Pi}^{i+1}=\hat{q}_*^{-1}\tilde{\Pi}^i$
for $i=2,3,4$ which together with the rank 2 distribution $\hat{\Pi}^2$ give a flag of suspaces of $T\hat{M}$.
This can be also defined as a lift of subspaces of $TM$ via Weyl connection. Indeed the symmetry $\zeta$ determines a 
reduction of the Cartan structure algebra $\mathfrak{sl}(4,\RR)$ to the opposite parabolic 
$\fp_{13}^\text{op}=\g_{-2}\oplus\g_{-1}\oplus\g_0$, where $\g_0=\mathfrak{gl}(2,\RR)$,
$\g_{-1}=\RR^4$ and $\g_{-2}=\RR$, the latter generated by $\zeta$. This reduction is given by a Weyl structure.

 \begin{proposition} 
Given a zero-curvature subconformal structure  $(M,\Delta,g)$
the rank 3 distribution $\hat{\Pi}^3 \subset T\hat M$ is integrable if and only if $S=0$ holds for the induced structure on $Q$. 
Assuming that $\hat\Pi^3$ is integrable, the integrability of $\hat{\Pi}^4\subset T\hat M$ is equivalent to $\tau_-=0$ and $S=0$ on $Q$. 
Finally, assuming $\hat\Pi^3$ and $\hat\Pi^4$ are integrable,  
the integrability of $\hat{\Pi}^5\subset T\hat M$ is equivalent to the vanishing of $\tau_-$ and $\op{Ric}_{\tilde g}$.
\end{proposition}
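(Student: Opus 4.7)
The plan is to mirror the proof of the previous proposition (for $\tilde{\Pi}^i$ on $\hat{Q}$), but carrying out the Frobenius analysis on $\hat{M}$ and carefully tracking the $\omega^0$-corrections introduced by contactification. The crucial input is the identification \eqref{eq:contactify-coframe}, which tells us that $\hat q^*\tomega^a = \omega^a$ for $a\le 4$, while $\hat q^*\tomega^5 = \omega^5-\tfrac12 s_0\omega^0$. Pulling back the annihilators of $\tilde\Pi^2,\tilde\Pi^3,\tilde\Pi^4$ therefore gives
\[
\operatorname{Ann}(\hat\Pi^3)=\langle \omega^1,\omega^4,\omega^5-\tfrac12 s_0\omega^0\rangle,\quad
\operatorname{Ann}(\hat\Pi^4)=\langle \omega^1,\omega^5-\tfrac12 s_0\omega^0\rangle,\quad
\operatorname{Ann}(\hat\Pi^5)=\langle \omega^5-\tfrac12 s_0\omega^0\rangle.
\]
The presence of the shift by $\tfrac12 s_0\omega^0$ in the last generator is what distinguishes this analysis from the naive pullback, and it is precisely what causes the Schouten-type invariant $S$ to enter the Frobenius conditions.

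Next, I would apply Frobenius to each annihilator on $\hat{M}$ using the structure equations \eqref{eq:CartanCurvature}, imposing $W=0$ throughout. Writing $\alpha:=\omega^5-\tfrac12 s_0\omega^0$, one computes $d\alpha=d\omega^5-\tfrac12 ds_0\wedge\omega^0-\tfrac12 s_0\,d\omega^0$, and reduces modulo non-basic 1-forms on $\hat M$ using the contact identity $d\omega^0\equiv \omega^1\wedge\omega^3+\omega^2\wedge\omega^4\pmod{\omega^0}$. For $\hat\Pi^3$ one computes $d\omega^1,d\omega^4,d\alpha$ modulo $\{\omega^1,\omega^4,\alpha\}$; using the explicit form of $\Omega^5,\Phi_1,\Xi_5$ in \eqref{eq:CartanCurvature} and the contactification formulas \eqref{eq:contactify-coframe}, the obstruction collapses to the quadratic $s_0(\omega^2)^2+2s_1\omega^2\omega^1-s_2(\omega^1)^2$ (and its $(\omega^3,\omega^4)$-counterpart), i.e.\ precisely $S$ of \eqref{eq:tildetau}. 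For $\hat\Pi^4$, assuming $S=0$ one drops $\omega^4$ from the annihilator, and the new Frobenius obstruction in $d\omega^4\equiv T^2_+\omega^1\wedge\omega^2$ (and the remaining part of $d\alpha$) isolates the coefficients $t^1_-,t^2_-$, giving $\tau_-=0$. For $\hat\Pi^5$, assuming both $\tau_-=0$ and $S=0$, only $d\alpha\equiv 0\pmod{\alpha}$ remains; after the torsion cancellations this condition is equivalent (via \eqref{eq:Rest-Of-Schouten} together with the definitions of $s_0,s_1,s_2,r,P_{ij}$) to the vanishing of the Ricci tensor of $\tilde g$.

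The hard part is step 2 itself: carefully tracking how the non-basic forms $\phi_i,\xi_j,\rho$ cancel in each reduction, and how the $\omega^0$-shift by $-\tfrac12 s_0\omega^0$ mixes the parabolic curvature components of the subconformal Cartan connection with the $s_0,s_1,s_2,r$ components of the Schouten tensor of $\tilde g$. The proof is essentially a matching of bookkeeping: each successive weakening of the annihilator ideal introduces exactly one new layer of Cartan curvature into the Frobenius obstruction, and the inductive assumption (vanishing of the previous obstruction together with $W=0$) is what allows identification of this layer with $S$, then $\tau_-$, then $\operatorname{Ric}_{\tilde g}$, on the reduced 4-manifold $Q$. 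The translation of each Frobenius condition into a tensor equation on $Q$ relies on the identifications $\tau_\pm=q^*\ttau_\pm$ from Section \ref{Sec:4.2} and on \eqref{eq:Rest-Of-Schouten}, so the correspondence with the invariants of the almost para-K\"ahler structure on $Q$ is read off directly.
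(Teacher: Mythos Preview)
There is a genuine gap in your identification of the annihilators. Your formula
$\operatorname{Ann}(\hat\Pi^3)=\langle\omega^1,\omega^4,\omega^5-\tfrac12 s_0\omega^0\rangle$
is precisely the pullback $\hat q^*\operatorname{Ann}(\tilde\Pi^2)=\langle\hat q^*\tomega^1,\hat q^*\tomega^4,\hat q^*\tomega^5\rangle$, and since a submersion preserves Frobenius integrability, this ideal is closed under $d$ if and only if $\tilde\Pi^2$ is integrable. Under the standing zero-curvature hypothesis on $M$, self-duality of $\tilde g$ holds by Theorem~\ref{ThmCont}, so $\tilde\Pi^2$ is \emph{always} integrable; hence your version of $\hat\Pi^3$ would be integrable unconditionally, contradicting the statement that integrability is equivalent to $S=0$. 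The same reasoning applied to your $\hat\Pi^4,\hat\Pi^5$ would simply reproduce the conditions of the previous proposition on $\hat Q$, shifted down by one index, rather than the new conditions claimed here. So the assertion that, with your annihilator, ``the obstruction collapses to $S$'' cannot be correct: with that annihilator the obstruction is identically zero.

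The paper instead takes
$\hat\Pi^3=\pi_{\hat\cK *}\langle\omega^1,\omega^4,\omega^5\rangle^\perp$,
$\hat\Pi^4=\pi_{\hat\cK *}\langle\omega^1,\omega^5\rangle^\perp$,
$\hat\Pi^5=\pi_{\hat\cK *}\langle\omega^5\rangle^\perp$,
using the \emph{subconformal} Cartan forms $\omega^i$ on the Weyl-reduced bundle $\hat\cK\hookrightarrow\cG$, \emph{without} the $-\tfrac12 s_0\omega^0$ shift. (The sentence ``$\hat\Pi^{i+1}=\hat q_*^{-1}\tilde\Pi^i$'' in the text is heuristic; the operative definition is the one via the Weyl structure coming from the symmetry $\zeta$.) With these annihilators one checks Frobenius using \eqref{eq:PCS-streq} together with \eqref{eq:contactify-coframe}: for instance, since $\omega^5=\tomega^5+\tfrac12 s_0\omega^0$, one finds modulo $\langle\tomega^1,\tomega^4,\tomega^5+\tfrac12 s_0\omega^0\rangle$ that $d\tomega^1\equiv -\tfrac12 s_0\,\tomega^2\wedge\omega^0$ and $d\tomega^4\equiv \tfrac12 s_0\,\tomega^3\wedge\omega^0$ from the $\tomega^2\wedge\tomega^5$ and $\tomega^3\wedge\tomega^5$ terms of $\tpsi\wedge\tpsi$, with $s_1,s_2$ entering analogously. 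Your conclusion is the right one, but it is only valid for the paper's annihilators, not for the ones you wrote down.
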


 \begin{proof}
Infinitesimal symmetry yields a reduction of the structure bundle given by $\iota:\hat\cK\to\cG$, as discussed before. 
Using the pull-back of the Cartan connection \eqref{eq:A3P123} to $\cK$  and the projection $\pi_{\hat\cK}:\cK\to\hat M$ 
one has
$\hat{\Pi}^2=\pi_{\hat\cK *}\langle\omega^0,\omega^1,\omega^4,\omega^5\rangle^\perp$,
$\hat{\Pi}^3=\pi_{\hat\cK *}\langle\omega^1,\omega^4,\omega^5\rangle^\perp$,
$\hat{\Pi}^4=\pi_{\hat\cK *}\langle\omega^1,\omega^5\rangle^\perp$ and
$\hat{\Pi}^5=\pi_{\hat\cK *}\langle\omega^5\rangle^\perp$. 
Now the proof is a straightforward inspection of Frobenius integrability conditions using the structure equations \eqref{eq:PCS-streq} and the relations \eqref{eq:contactify-coframe} on $\cK$.
 \end{proof}
 
In the Proposition above,  the Frobenius condition for $\hat{\Pi}^3$ given by $S=0$, has no analogs on $\hat{Q}$. 
This provides another interpretation of the vanishing of $S$.

 \begin{remark}
Our hierarcy of reductions, when a Lax pair is extended to a Lax triple and up to a Lax quintuple, are nested:
the smaller equation (larger Lax distribution) is defined when its larger counter-part is integrable.
It turns out that given a zero-curvature subconformal structure with an infinitesimal symmetry and integrable  
$\hat\Pi^i$ for $i=2,3,4,5$, its Cartan  Holonomy is reduced to $\mathfrak p_{13}^\text{op}\subset \mathfrak{sl}(4,\RR)$ via a reduction $\iota\colon\hat\cK\to\cG$ if and only if $C_1=0$, i.e.\ $\ttau_-=0$ and $\op{Ric}_{\tilde g}=0$ and the binary quartic \eqref{eq:Self-dual-Weyl-Curvature} has a repeated root of multiplicity  4 or is zero on $Q$.
 \end{remark}

\subsection{2-nondegenerate CR structures on the  twistor bundle}\label{Sec:4.4}

For our purposes in this section, involving  (para-)CR structures,  define  $\Bbbk_\ve:=\RR[\sqrt{\ve}]$ for $\ve=\pm1$ (also written as 
$\ve=\pm$), i.e.\ $\Bbbk_{-}=\CC$ and $\Bbbk_{+}=\RR$.

 \begin{definition}\label{def:2-nondegenerate-cr}
An almost $\ve$-CR structure of hypersurface type on a manifold $N$ consists of 
a corank one distribution $D\subset TN$ equipped with a field of compatible $\ve$-complex structures
 \begin{equation}\label{JJJ}
\cJ_\ve:D\to D,\quad  \cJ_\ve^2=\ve\1, \quad \op{Tr}(\cJ_\ve)=0,\quad
\mathcal{L}(\cJ_\ve X,\cJ_\ve Y)=-\ve\mathcal{L}(X,Y),
 \end{equation}
where $\mathcal{L}:\Lambda^2D\to TN/D$ is the Levi bracket of $D$ defined by
 \[
\mathcal{L}(X,Y)=[\tilde{X},\tilde{Y}]_x\,\op{mod}D_x\ \text{ for }\ 
X,Y\in D_x,\  \tilde{X},\tilde{Y}\in\Gamma(D), \ \tilde{X}_x=X,\ \tilde{Y}_x=Y. 
\]
An $\ve$-CR structure is characterized by the  condition that $D'$ and $D''$ are Frobenius integrable where $D',D''$ are eigenspaces of $\cJ_\ve$ corresponding to $\sqrt{\ve}$ and $-\sqrt{\ve}$. 

In the Levi degenerate case, denote  the kernel of $\mathcal{L}$ by $K\subset D$,
which is equipped with the splitting $K'\oplus K''=K\otimes\Bbbk_\ve$ (with obvious notations) and 
the higher Levi bracket $\mathcal{L}'_2:K'\otimes D''\to D'/K'$  defined by
 \[
\mathcal{L}'_2(X,Y)=[\tilde{X},\tilde{Y}]_x\,\op{mod}(K'_x\oplus D''_x)\ \text{ for }\ 
X\in K'_x, \ Y\in D''_x,
 \]
where as above $\tilde{X}\in\Gamma(K')$,  $\tilde{Y}\in\Gamma(D'')$,  $\tilde{X}_x=X$,  $\tilde{Y}_x=Y$;
its conjugate $\mathcal{L}''_2:K''\otimes D'\to D''/K''$ is defined similarly.
An $\ve$-CR structure is called 2-nondegenerate if $\mathcal{L}'_2(X,D'')=0$ implies $X=0$ 
and similarly for the conjugate $\mathcal{L}''_2$.   
 \end{definition}

 \begin{remark}
Note that  when the Levi form is nondegenerate the trace condition in \eqref{JJJ}, which is automatic for $\ve=-1$, 
follows also for $\ve=+1$. This condition implies, in turn,  
that the eigenspaces $D',D''$ of $\cJ_\ve$ corresponding to $\sqrt{\ve}$ and $-\sqrt{\ve}$ have equal dimensions,  which gives  $D'\oplus D''=D\otimes\Bbbk_\ve$,  and hence $\dim N$ is odd.

For CR-structures ($\ve=-1$) the conjugate conditions follow automatically (they are required for para-CR case
$\ve=+1$).  Such structures are a subclass of $k$-nondegenerate CR-structures (for some finite $k$), which 
in the analytic case enjoy the following properties:
 \begin{itemize}
\item They cannot be CR-straightened,   i.e. they are  not CR-equivalent to a product \cite{Fr};
\item Their algebra of infinitesimal holomorphic symmetries is finite-dimensional \cite{BER}.
 \end{itemize}
 \end{remark}

Recall that a compatible subconformal structure on a contact 5-manifold possesses a naturally associated 
Cartan bundle $(\mathcal{G}\to M,\psi)$ with parabolic structure group $P_{13}\cong G_0\ltimes P_+$, 
where $G_0\cong (\RR_\times)\times SL(2,\RR)\times(\RR_\times)$ is the reductive part and $P_+$ is the nilradical;
note that $G_0$ action on $P_+$ factors through $R_\times GL(2,\RR)$. 
Via the $GL(2,\RR)$ action, the contact structure $\Delta$ can be equipped with an almost $\ve$-complex structure 
using $J_\ve\in GL(2,\RR)$ given by
 \begin{equation}\label{eq:fJ-+}
J_{-} = \begin{pmatrix} 0 & 1 \\ -1 & 0 \end{pmatrix},\quad
J_{+}= \begin{pmatrix} 0 & 1 \\ 1 & 0 \end{pmatrix},
 \end{equation}
which, as we will show,  gives rise to a 2-nondegenerate $\ve$-CR structure.  

The stabilizer of $J_\ve$ in $P_{13}$ is equal to 
$H^\ve=\bigl((\RR_\times)\times H^\ve_0\times(\RR_\times)\bigr)\ltimes P_+$,  where 
$H_0^\varepsilon$ is $SO(2), SO(1,1)\subset SL(2,\RR)$  for $\varepsilon=-1,1,$ respectively. 
Define the $\varepsilon$-twistor bundle as
 \[
\hat{\cT}_\ve=\mathcal{G}\slash H^\ve=\mathcal{G}\times_P(P\slash H^\ve).
 \]
This bundle can be identified as the bundle of all $g$-compatible $\ve$-complex structures, cf. \eqref{JJJ}, i.e.\ 
endomorphisms on $\Delta$ satisfying:
 \begin{equation}\label{gJe}
J_\ve:\Delta\to\Delta,\quad  J_\ve^2=\ve\1, \quad \op{Tr}(J_\ve)=0,\quad g(J_\ve X,J_\ve Y)=-\ve g(X,Y).
 \end{equation}
Thus, $\nu_\ve:\hat{\cT}_\ve\to M$ is a fiber bundle with the fibers $SL(2,\RR)\slash H_0^\ve$. 

 \begin{lemma}\label{lmT}
Any fiber $SL(2,\RR)\slash H_0^\varepsilon$ is diffeomorphic to the disk $\mathbb{D}^2$ when $\ve=-1$
and to the cylinder $\mathbb{S}^1\times\RR^1$ when $\ve=+1$.
 \end{lemma}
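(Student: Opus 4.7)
The plan is to realize each quotient as a familiar $SL(2,\RR)$-orbit, exploiting standard transitive actions.

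For $\ve=-1$, I would use the action of $SL(2,\RR)$ on the upper half-plane $\mathbb{H}^2$ by M\"obius transformations. This action is transitive with stabilizer of $i$ equal to $SO(2)=H_0^-$, so $SL(2,\RR)/SO(2)\cong \mathbb{H}^2$; the Cayley transform $z\mapsto (z-i)/(z+i)$ then provides a diffeomorphism $\mathbb{H}^2\cong \mathbb{D}^2$.

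For $\ve=+1$, I would pass to the adjoint representation $\mathrm{Ad}\colon SL(2,\RR)\to GL(\mathfrak{sl}(2,\RR))$, which preserves the Killing form of signature $(2,1)$ and yields the familiar double cover $SL(2,\RR)\to SO^+(2,1)$ with kernel $\{\pm I\}$. A short check shows that $SO(1,1)=H_0^+$ contains $-I$ and that $\mathrm{Ad}$ sends $SO(1,1)$ onto the boost subgroup $SO^+(1,1)\subset SO^+(2,1)$. Therefore
\[
SL(2,\RR)/SO(1,1)\;\cong\; SO^+(2,1)/SO^+(1,1),
\]
which is the $SO^+(2,1)$-orbit of a spacelike unit vector in $\RR^{2,1}$, namely the one-sheeted hyperboloid $\{x^2+y^2-z^2=1\}$. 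The parametrization $(x,y,z)=(\cosh t\cos\theta,\cosh t\sin\theta,\sinh t)$ gives an explicit diffeomorphism with $\mathbb{S}^1\times\RR$.

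The only potential subtlety is bookkeeping with connected components: one must verify that $SO(1,1)$ has two components in $SL(2,\RR)$ and indeed contains $-I$ (so that modding by the kernel of the double cover is harmless), and that the image $\mathrm{Ad}(SO(1,1))$ equals precisely $SO^+(1,1)$ rather than a proper subgroup. Both points follow immediately from the explicit form \eqref{eq:fJ-+} and the compatibility condition \eqref{gJe}, but they constitute the only genuinely non-routine step in what is otherwise a direct application of homogeneous-space theory.
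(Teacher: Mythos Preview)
Your argument is correct. For $\ve=+1$ you use essentially the same approach as the paper: the adjoint action of $SL(2,\RR)$ on $\mathfrak{sl}(2,\RR)\cong\RR^{1,2}$, identifying the quotient with a one-sheeted hyperboloid (de Sitter plane). Your careful bookkeeping of the double cover and the fact that $-I\in SO(1,1)$ is more explicit than the paper, which simply declares the identification ``obvious from the adjoint action.''

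For $\ve=-1$ you take a genuinely different route: the M\"obius action on the upper half-plane, whereas the paper again uses the adjoint action, now picking up the two-sheeted hyperboloid $t^2-x^2-y^2=1$ (Lobachevski plane) as the orbit of a timelike unit vector. The paper's approach has the aesthetic advantage of treating both signs uniformly and making the twistorial interpretation $Z_\ve(\RR^{2,2})=SO(2,2)/U_\ve\cong SL(2,\RR)/H_0^\ve$ transparent. Your M\"obius approach is arguably more elementary and self-contained for the negative case, since it avoids any discussion of the adjoint representation or the double cover. Both are entirely standard; the only minor quibble is that your citation of \eqref{gJe} is not really needed for the component count---the explicit form \eqref{eq:fJ-+} of $J_+$ alone suffices to verify that its centralizer in $SL(2,\RR)$ consists of matrices $\bigl(\begin{smallmatrix}a&b\\b&a\end{smallmatrix}\bigr)$ with $a^2-b^2=1$, which has two components and contains $-I$.
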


 \begin{proof}
This is obvious from the adjoint action of $SL(2,\RR)$ preserving the Killing form (of Lorentzian signature):
in the Minkowski coordinates $\RR^{1,2}(t,x,y)$, $ds^2=dt^2-dx^2-dy^2$,  the models of 
Lobachevski and de Sitter planes are given by $t^2-x^2-y^2=\pm1$ with the stabilizers of points
being conjugate to $SO(2)$ and $SO(1,1),$ respectively.

It is instructive to note that this isomorphism reflects, actually, the twistor picture:
 \begin{align*}
& Z_{-}(\RR^{2,2})=SO(2,2)/U(2)=SL(2,\RR)/SO(2)\simeq\mathbb{D}^2,\\
& Z_{+}(\RR^{2,2})=SO(2,2)/U(1,1)=SL(2,\RR)/SO(1,1)\simeq\mathbb{S}^1\times\RR^1,
 \end{align*}
which are, respectively,  the spaces of orthogonal complex/product structures in four dimensional space of split signature,
reflecting \eqref{gJe}.
 \end{proof}

Note that the $\ve$-twistor bundle $\hat{\cT}_\ve$ has a codimension 1 distribution, namely
the preimage of the contact distribution $D=d\nu_\ve^{-1}(\Delta)$.  Its space of Cauchy characteristics, or
the kernel of the Levi form, is $K=\op{Ker}(d\nu_\ve)$.
Similar to the classical twistor theory,  $D$ has the induced almost complex/product structure $\cJ_\ve$,
so $(\hat{\cT}_\ve,D,\cJ_\ve)$ is an almost $\ve$-CR manifold.
 
The following is a nonholomorphic higher-dimensional analog of the classical Atiyah-Hitchin-Singer version 
\cite{AHS} of  Penrose's nonlinear graviton construction.

 \begin{theorem}\label{thm:2-nondegenerate-cr-2}
A compatible subconformal structure $(M,\Delta,[g])$ in 5D has zero curvature if and only if
the corresponding almost $\ve$-CR manifold $(\hat{\cT}_\ve,D,\cJ_\ve)$ in 7D is integrable, which makes it
a 2-nondegenerate $\ve$-CR manifold.
 \end{theorem}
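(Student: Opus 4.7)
My plan is to carry out the entire analysis inside the normal Cartan geometry $(\mathcal{G}\to M,\psi)$ of type $(A_3,P_{13})$ from Section \ref{Sec:2.3}, identifying $\cT_\ve=\mathcal{G}/H^\ve$ and pulling back the coframe $\omega^0,\omega^1,\dots,\omega^4$ together with the relevant connection 1-forms $\omega^5,\xi_j,\phi_k$ on which the structure equations \eqref{eq:CartanCurvature} are written. First I would choose a local section of $\mathcal{G}\to\cT_\ve$ parametrized by a fiber coordinate $\lambda\in\Bbbk_\ve$ identifying $SL(2,\RR)/H_0^\ve$ with a domain in $\Bbbk_\ve$. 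The almost $\ve$-complex structure $J$ on $D=\nu_\ve^*\Delta$ preserves the splitting $L_-\oplus L_+$ and has trace zero, so its $\sqrt\ve$-eigenspace $D'\subset D\otimes\Bbbk_\ve$ has $\Bbbk_\ve$-dimension $3$, spanned by two horizontal $\ve$-lines depending tautologically on $\lambda$ (one inside $L_-$ and one inside $L_+$) together with the canonical $\sqrt\ve$-eigenvector of the natural $\ve$-complex structure on the vertical distribution $K=\op{Ker}(\nu_{\ve*})$. I would write the three annihilator 1-forms $\zeta^1,\zeta^2,\zeta^3$ of $D'$ explicitly as $\Bbbk_\ve$-linear combinations of $\omega^1,\omega^2,\omega^3,\omega^4$ with coefficients rational in $\lambda$, supplemented by the vertical 1-form.

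Second, I would test Frobenius integrability of $D'$ by expanding $d\zeta^a$ modulo $\langle\zeta^1,\zeta^2,\zeta^3,\omega^0\rangle$ using \eqref{eq:CartanCurvature}. The harmonic torsions $\tau_\pm$ from \eqref{eq:harmonic-torsions} take values in $L_+$ with indices in $\Lambda^2L_-^*$ and vice versa; a direct inspection shows that every such contribution is absorbed by the annihilator of $D'$, so $\tau_\pm$ produces no obstruction. What survives after the reduction is exactly the evaluation of the quartic curvature $W$ from \eqref{eq:hamornic-curvature} on the tautological $\ve$-line determined by $\lambda$, and requiring this to vanish identically in $\lambda$ forces $W_0=\dots=W_4=0$. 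Conversely, $W=0$ closes all three Frobenius conditions. The main obstacle is the long bookkeeping of which curvature entries survive the annihilator reduction; to shortcut it I would invoke a Kostant-cohomology argument parallel to \eqref{H2+}. The induced almost $\ve$-CR structure on $\cT_\ve$ carries its own harmonic curvature in the corresponding $H^2_+$, and by Kostant's theorem the twistor lift maps the $W$-component isomorphically onto the obstruction to integrability of $D'$, while $\tau_\pm$ lies in cohomological positions that vanish upon the lift.

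Third, assuming $W=0$, I would verify the 2-nondegeneracy by computing the higher Levi bracket $\mathcal L_2'\colon K'\otimes D''\to D'/K'$ directly in the parametrization above. The line $K'$ is generated by the $\sqrt\ve$-part of the vertical vector $\partial_\lambda$, while $D''$ contains the $-\sqrt\ve$-eigenvectors of $J$ in the horizontal lifts of $L_\pm$. Differentiating the $\lambda$-dependent horizontal eigenvectors with respect to $\lambda$ rotates them into the $\sqrt\ve$-eigenspace, so $[\partial_\lambda,Y]$ for horizontal $Y\in D''$ acquires a nonzero component in $D'/K'$ whose coefficient is the derivative of the tautological $\ve$-line and therefore never vanishes. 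This shows $\mathcal L_2'$ has no kernel in $K'$; the companion statement for $\mathcal L_2''$ follows by complex conjugation when $\ve=-1$ and by the symmetric $\ve$-reality argument when $\ve=+1$. Together these give 2-nondegeneracy of the integrable $\ve$-CR structure on $\cT_\ve$ and conclude the proof.
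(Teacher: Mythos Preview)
Your plan is correct and follows the same strategy as the paper: work in the Cartan geometry, write down annihilator forms for $D'$, test Frobenius via the structure equations \eqref{eq:CartanCurvature}, observe that $\tau_\pm$ drops out while the obstruction is exactly $W$, and then read off 2-nondegeneracy from the symbol algebra. The chief difference is one of execution. Rather than introducing a fiber coordinate $\lambda\in\Bbbk_\ve$ and writing $\lambda$-dependent annihilator forms on $\cT_\ve$, the paper works upstairs on $\cG$ with \emph{constant-coefficient} $\Bbbk_\ve$-combinations of the connection forms: for $\ve=-1$ these are $\zeta^1=\omega^1+i\omega^2$, $\zeta^2=\omega^3+i\omega^4$, and $\zeta^3=\phi_0-\phi_1+i(\omega^5+\xi_5)$ (with the obvious split-complex analogues for $\ve=+1$). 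Because these are $H^\ve$-equivariant they descend to $\cT_\ve$ automatically, and the Frobenius check becomes a one-line inspection of \eqref{eq:CartanCurvature} with no $\lambda$-bookkeeping; the 2-nondegeneracy is then the explicit relation \eqref{eq:2nondegeneracy}. Your $\lambda$-parameterised route is equivalent but longer, and your description of the third annihilator as ``the vertical 1-form'' slightly understates what is needed: $\zeta^3$ is a specific $\Bbbk_\ve$-combination of $\phi_0-\phi_1$ and $\omega^5\pm\xi_5$, not merely a form built from $\omega^1,\dots,\omega^4$ and $d\lambda$.

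Your proposed Kostant-cohomology shortcut is not in the paper and, as stated, is more of a heuristic than a proof: asserting that ``the twistor lift maps the $W$-component isomorphically onto the obstruction while $\tau_\pm$ lies in cohomological positions that vanish'' would itself require a computation comparable to the direct one. The paper simply does the direct inspection. Likewise, your phrasing ``evaluation of the quartic $W$ on the tautological $\ve$-line determined by $\lambda$'' conflates the picture here with the $\alpha$-plane bundle $\hat M$ of Section~\ref{Sec:2.2}; the fibers of $\cT_\ve\to M$ are two-dimensional, and the obstruction is not a single polynomial in one real parameter but the full tensor $W$, which appears in the $d\bar\zeta^3$ computation through the entries of $\Omega^5$, $\Xi_5$, $\Phi_0$, $\Phi_1$ in \eqref{eq:CartanCurvature}.
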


Note that $\ve=\pm1$ here is independent of the choice of the invariant $\op{sgn}\delta=\pm1$ of $(M,\Delta,[g])$. 
Furthermore, if the result holds for either $\ve=+1$ or $-1$ then it is true for both $\ve=\pm1.$

\begin{proof}
Note that we have $T\hat{\cT}_\ve=\mathcal{G}\times_{H^\ve}(\g/\h^\ve)$, where $\h^\ve=\op{Lie}(H^\ve)$.
In the $\fp_{13}$ (contact) grading of $\g$, the distribution $D$ corresponds to $\g_{-1}\oplus(\g_0/\h^\ve)$
and $K$ to $\g_0/\h^\ve$.

For $\ve=-1$, in terms of matrix expression \eqref{eq:A3P123}, 
the 1-forms $(\omega^0,\cdots,\omega^4,\omega^5+\xi_5,\phi_0-\phi_1)$ give a coframe on $\hat{\cT}_-$, and
$D=\op{Ker}(\nu_-^*\omega^0)\subset T\hat{\cT}_-$. 
Using the action of $GL(2,\RR)$ and the almost complex structure $J_{-}$ as in \eqref{eq:fJ-+}, 
it follows that the holomorphic $(1,0)$-type distribution in Definition \ref{def:2-nondegenerate-cr} is given by
$D'=(\nu_-)_*\op{Ker}\{\omega^0,\bar\zeta^1,\bar\zeta^2,\bar\zeta^3\}\subset D^\CC$
and $D''=\overline{D'}$, where 
  \[ 
\zeta^1=\omega^1+i\omega^2,\qquad 
\zeta^2=\omega^3+i\omega^4,\qquad 
\zeta^3=\phi_0-\phi_1+i(\omega^5+\xi_5).
 \]
A straightforward inspection of the structure equation \eqref{eq:CartanCurvature} shows that $D'$ is Frobenius integrable 
if and only if $W$ vanishes. Thus, $W=0$ implies CR-integrability of $\cJ_-$.  Moreover one obtains
 \begin{equation}\label{eq:contact-1form}
d\omega^0\equiv \tfrac{1}{2}(\zeta^1\wedge\bar\zeta^2-\zeta^2\wedge\bar\zeta^1)\,\op{mod}\{\omega^0\}.
 \end{equation}
As a result, the Levi bracket is degenerate along $K=K'\oplus K''$, where $K'=\langle\p_{\zeta^3}\rangle$. 
Lastly, the 2-nondegeneracy of $(D,\cJ_-)$ follows from  the symbol algebra of $(\mathfrak{sl}(4,\RR),\fp_{13})$ since
 \begin{equation}\label{eq:2nondegeneracy}
d\overline\zeta^1\equiv\half\zeta^1\w\overline\zeta^3,\quad 
d\overline\zeta^2\equiv-\half\zeta^2\w\overline\zeta^3\quad
\op{mod} \{\omega^0,\overline\zeta^1,\overline\zeta^2\}.
 \end{equation}

For $\varepsilon=+1$, one can proceed similarly. In terms of matrix expression \eqref{eq:A3P123}, 
the 1-forms $(\omega^0,\cdots,\omega^4,\omega^5-\xi_5,\phi_0-\phi_1)$ give a coframe on $\hat{\cT}_+$ and
$D=\op{Ker}(\nu_+^*\omega^0)\subset T\hat{\cT}_+$.  Using the $GL(2,\RR)$ action 
and the expression of $J_{+}$ as in \eqref{eq:fJ-+}, the corresponding para-holomorphic distributions are 
$D'=(\nu_+)_*\op{Ker}\{\omega^0,\bar\zeta^1,\bar\zeta^2,\bar\zeta^3\}\subset D$,
$D''=(\nu_+)_*\op{Ker}\{\omega^0,\zeta^1,\zeta^2,\zeta^3\}\subset D$, where 
  \begin{align*}
\hskip30pt\zeta^1&=\omega^1-\omega^2, &\hskip-20pt
\zeta^2&=\omega^3+\omega^4,&\hskip-20pt
\zeta^3&=\phi_0-\phi_1+(\omega^5-\xi_5),\\
\hskip30pt\bar\zeta^1&=\omega^1+\omega^2, &\hskip-20pt
\bar\zeta^2&=\omega^3-\omega^4,&\hskip-20pt
\bar\zeta^3&=\phi_0-\phi_1-(\omega^5-\xi_5),
 \end{align*}
with respect to which relation \eqref{eq:contact-1form} remains valid. 
It is again a matter of straightforward inspection of the structure equations \eqref{eq:CartanCurvature} 
to show that distributions $D'$ and $D''$ are Frobenius integrable  if and only if $W$ is zero. 
Lastly,  the 2-nondegeneracy of the  para-CR structure follow from the symbol algebra since 
\eqref{eq:2nondegeneracy} remains valid in the para-CR case as well. 
 \end{proof}

The geometric picture presented here has a direct counterpart on the level of 
equations: the dispersionless integrability of PDE $\E$,  which by Theorem \ref{Th2} is given by the zero-curvature condition of the induced subconformal geometry on $M_u$ for a generic background solution $u$ of $\E$,
is also equivalent to (para)CR-integrability of the corresponding structures on $\hat{\cT}_\ve$ with any choice of $\ve=\pm1$.

\begin{remark}
We point out that the 7-dimensional 2-nondegenerate CR structures, associated to zero-curvature parabolic geometries of type $(A_3,P_{13})$ in Theorem \ref{thm:2-nondegenerate-cr-2}, are generalized in \cite{Gr-CR} to other types of parabolic geometries.

Furthermore, the integrability of the complex structure $\cJ_-$ on $D$ can be considered as the odd-dimensional counter-part 
of the same phenomenon on the twistor bundles for para-quaternionic structures,  known as $\beta$-integrable $(2,n)$-Segr\'e structures,  \cite{Met,Zdk}.
\end{remark}

\subsection{Final examples: maximal and submaximal symmetric models}\label{Sec:4.5}

Let us begin with the zero-curvature subconformal structure of maximal symmetry and then we will discuss 
the constructions of this section for submaximal symmetric structures.

\smallskip

{\bf 1.}
The unique zero-curvature subconformal structure of maximal symmetry for $\delta>0$ is $SL(4,\RR)/P_{13}$ and 
that for $\delta<0$ is $SU(2,2)/P_{13}$; the symmetry is 15-dimensional as indicated.

Let us consider the flat para-CR case ($\delta>0$) in details. The model, 
corresponding via the construction from Section \ref{Sec:4.1} to the flat projective structure, is
given by $M=\RR^5(x^1,x^2,u,p_1,p_2)$ and the $L_\pm$ splitting of the contact distribution 
 $$
\Delta=\op{Ker}(\omega^0),\quad \omega^0=du-p_1dx^1-p_2dx^2,
 $$ 
as follows (we again re-denote the subdistributions by $H,V$): 
 $$
\Delta=H\oplus V\ \ \text{ with }\ \ 
H=\langle\p_{x^1}+p_1\p_u,\p_{x^2}+p_2\p_u\rangle\ \text{ and }\ V=\langle\p_{p_1},\p_{p_2}\rangle.
 $$
The subconformal structure is represented by the metric $g=dx^1\,dp_1+dx^2\,dp_2$ on $\Delta$. 

The symmetry algebra $\g=\mathfrak{sl}(4,\RR)$ contains an element $\xi=u\p_u+p_1\p_{p_1}+p_2\p_{p_2}$. 
To obtain the corresponding symmetry reduction we pass to coordinates $(x^1,x^2,\ln(1/u),p_1/u,p_2/u)$,
where the first two and last two are invariants of the flow of $\xi$. Keeping the same notations $(x^1,x^2,u,p_1,p_2)$
for the new coordinates the quotient conformal metric on $\bar{M}=\RR^4(x^1,p_1,x^2,p_2)$ is given by the formula
 $$
\bar{g} = dx^1dp_1+dx^2dp_2+(p_1dx^1+p_2dx^2)^2.
 $$
This metric is self-dual but not conformally flat. 
A straightforward computation of conformal Killing vectors shows that this quotient corresponds to $SL(3)/GL(2)$.
(This can be also seen from Lie-theoretic arguments, since the normalizer of $\xi$ in $\g$ is $\mathfrak{gl}(3,\RR)$
with $\xi$ in the center.)

The corresponding CR-reduction (the case $\delta<0$) with $\g=\mathfrak{su}(2,2)$ gives $SU(1,2)/U(1,1)$.  

Note that for any $\xi\in\g$ the symmetry reduction allows to obtain not only 4D conformal structure, but also its canonical
metric representative. Indeed, for the symmetry $\xi$ (in the open dense set, where it is transversal to the contact distribution)  
the contact form can be normalized by the condition $\omega^0(\xi)=1$, whence we get a symplectic form $\Omega$ on 
$\Delta$ and normalization of the metric $\|\Omega^{-1}g\|=1$. This quotient metric $\bar{g}$ is almost 
(pseudo/para-)K\"ahler of neutral signature.

Actually, since in the flat model the torsion is zero, the metric $\bar{g}$ for $\delta<0$ is pseudo-Kähler 
and for $\delta>0$ is para-Kähler, i.e.\ the almost (para-)complex structure $J$ is integrable. 
The space of self-dual (pseudo/para-)K\"ahler structures (also known as Bochner-K\"ahler or 
Bochner-bi-Lagrangian structures, cf.\ \cite{CS09}) arising as the symmetry reduction of 
the flat 5D subconformal contact structure, depends on 3 parameters. 
More precisely, among the fundamental invariants \eqref{eq:tildetau}, one has $\tilde\tau_\pm=0,$ 
and, by \eqref{eq:Rest-Of-Schouten}, the self-dual Weyl curvature, given by the quartic \eqref{eq:Self-dual-Weyl-Curvature}, 
is zero if $r=0,$ or otherwise has Petrov type $D$ if $r\neq 0,$ i.e. has two repeated roots of multiplicity two.  
The unique (up to homothety) such non-conformally flat self-dual (para-)K\"ahler metric that is Einstein, 
i.e. satisfies $S=0$ and $R\neq 0$ in \eqref{eq:tildetau}, is the canonically defined pseudo-K\"ahler metric on 
$SU(1,2)/U(1,1)$ and para-K\"ahler metric on  $SL(3,\RR)/GL(2,\RR)$. 

\smallskip

{\bf 2.}
Now consider submaximally symmetric structures. For $A_3/P_{13}$ type geometry the submaximal
symmetry dimension is 8, achieved both in pure curvature and in pure torsion modules \cite{KT}. 
In the split real case $\delta>0$ the submaximal zero-curvature subconformal structue is unique 
and is given by the lift of the Egorov projective connection. 
We write it as a deformation of the trivial system of ODEs:
 \begin{equation}\label{Egorov}
\ddot{x}^1=0,\quad \ddot{x}^2=\epsilon\,x^1.
 \end{equation}
By the construction of Section \ref{Sec:4.1} this generates the zero-curvature structure in 5D, which as above, 
in coordinates $(x^1,x^2,u,p_1,p_2)$ is given by the $L_\pm$ splitting of the contact distribution:
 \begin{equation}\label{EgHV}
\Delta=H\oplus V\ \ \text{ with }\ \ 
H=\langle\p_{x^1}+p_1\p_u+\epsilon\,x^1\p_{p_2},\p_{x^2}+p_2\p_u+\epsilon\,x^1\p_{p_1}\rangle\ 
\text{ and }\ V=\langle\p_{p_1},\p_{p_2}\rangle.
 \end{equation}
The subconformal structure is represented by the metric $g=\omega^1\,\omega^3+\omega^2\,\omega^4$ 
in the coframe $\omega^1=dx^1$, $\omega^2=dx^2$, $\omega^3=dp_1-\epsilon\,x^1dx^2$, 
$\omega^4=dp_2-\epsilon\,x^1dx^1$ on $\Delta$. The point symmetry algebra of \eqref{Egorov} coincides
with the symmetry algebra $\g$ of the subconformal structure; it is solvable and generated by
 \begin{gather*}
\xi_1=x^1\p_{x^1}-x^2\p_{x^2}+u\p_u+2p_2\p_{p_2},\
\xi_2=x^1\p_{x^2}+\tfrac13\epsilon\,(x^1)^3\p_u+\bigl(\epsilon(x^1)^2-p_2\bigr)\p_{p_1},\\
\xi_3=x^2\p_{x^2}+u\p_u+p_1\p_{p_1},\
\xi_4=\p_{x^1}+\epsilon\,(x^1x^2\p_u+x^2\p_{p_1}+x^1\p_{p_2}),\\
\xi_5=\p_{x^2},\ \xi_6=\p_u,\
\xi_7=x^1\p_u+\p_{p_1},\ \xi_8=x^2\p_u+\p_{p_2}.
 \end{gather*}
Consider 
three Abelian subalgebras $\mathfrak{h}_1=\langle\xi_4,\xi_7\rangle$,
$\mathfrak{h}_2=\langle\xi_4,\xi_5\rangle$, $\mathfrak{h}_3=\langle\xi_4,\xi_3\rangle$.
All of them are non-null, meaning that the span of the fields does not intersect with the $g$-null cone on $\Delta$.

Let us first do symmetry reduction along $\xi_4$, which is the field common to all subalgebras. Passing to coordinates 
$\bigl(u-\frac12\epsilon\,(x^1)^2(x^2),x^2,x^1,p_1-\epsilon\,x^1x^2,p_2-\frac12\epsilon\,(x^1)^2\bigr)$
where the first two and last two are invariants of the flow of $\xi_4$ and keeping the same notations $(x^1,x^2,u,p_1,p_2)$
for the new coordinates, the quotient conformal metric on $\bar{M}=\RR^4(x^1,p_1,x^2,p_2)$ is given by the formula
 $$
\bar{g} = dx^1dp_1+dx^2(p_1dp_2-p_2dp_1)+\epsilon\,\tfrac{x^2}{p_1}(dx^1-p_2dx^2)^2.
 $$
This metric is self-dual but not conformally flat. 

We can take further reduction from 4D to 3D, in new coordinates the remaining generators of  
$\mathfrak{h}_1$, $\mathfrak{h}_2$, $\mathfrak{h}_3$ have the form: $\bar\xi_7=\p_{x^1}$,
$\bar\xi_5=x^2\p_{x^1}+\p_{p_2}$, $\bar\xi_3=x^1\p_{x^1}+x^2\p_{x^2}+p_1\p_{p_1}$.
Reduction along both $\bar\xi_7$ and $\bar\xi_5$ yields conformally flat 3D metrics, but the symmetry reduction 
along $\bar\xi_3$ gives in new coordinates/invariants $(x_1,x_2,x_3)$ the following conformal metric
 $$
\check{g}=(x_2x_3dx_1-(x_1x_3-1)dx_2+x_1x_2dx_3)^2+4\,(\epsilon\,x_1(x_1x_3-1)-x_2^2)\,dx_1dx_3.
 $$
This metric on the quotient $\check{M}=\RR^3(x_1,x_2,x_3)$ is not conformally flat, but is Einstein-Weyl.

\smallskip

{\bf 3.}
Let us describe the geometry on the twistor space $\mathcal{T}^4$ obtained from the correspondence space
$\hat{M}^6$ via quotient by the foliation of the dLp $\hat{\Pi}^2$. For the flat $(A_3,P_{13})$ geometry
the induced geometry on the twistor space is flat conformal, i.e.\ of type $(A_3,P_2)$. This follows from 
the parabolic twistor correspondence \cite{Cp}, see the discussion at the beginning of Section \ref{Sec:4}.
 
Otherwise the projection of tangents $\p_\lambda$ to the fibers of $\hat{M}^6\to M^5$ along the foliation $\hat{\Pi}^2$
yields a field of surfaces on $\cT$, i.e.\ a causal structure (see the discussion at the end of Section \ref{Sec:2.4});
thus the correspondence space can be identified with a codimension one sub-bundle of the projectivized tangent bundle: 
$\hat M\subset\mathbb{P}T(\mathcal{T})$. In general such a causal structure 
(also known as a cone structure) may not be \emph{isotrivial}, in the sense of \cite{Hwang}. 
Being isotrivial means that all the fibers of $\hat M\to\cT$  are projectively equivalent to a fixed projective surface 
$C\subset\PP^3$, and being isotrivially flat means trivialization of the bundle $\hat M^6\simeq C\times\cT$
as a subbundle of $\mathbb{P}T(\mathcal{T})$.

In the case of large symmetry, the causal structures are isotrivial. For the maximal symmetric case the causal structure
is isotrivially flat with quadratic fibers.
For the submaximal symmetric case it was demonstrated in \cite{Cayley} that $\hat M\to\cT$ is the so-called 
\emph{Cayley structure}, namely the fiber $C$ is the projectivized ruled Cayley cubic; moreover
it was proved that there are precisely two other 3D projective connections inducing a Cayley structure on $\cT$.
Let us show it directly for the zero-curvature subconformal structure, derived from the Egorov projective connection.

Denoting the vector generators in \eqref{EgHV} by $h_1,h_2$ for $H$ and $v_1,v_2$ for $V$, the dLp is
 $$
\hat{\Pi}=\langle h_1+\lambda h_2=\p_{x^1}+\lambda\p_{x^2}+(p_1+\lambda p_2)\p_u
+\epsilon\,x^1(\p_{p_2}+\lambda\p_{p_1}),\, v_2-\lambda v_1=\p_{p_2}-\lambda\p_{p_1}\rangle.
 $$ 
This distribution is Frobenius integrable on $\hat{M}^6$. Passing to new coordinates 
$\bigl( p_1+\lambda(p_2-\epsilon\,(x^1)^2), x^2-\lambda x^1, u + \lambda(\frac23\epsilon\,(x^1)^3-x^1p_2)-x^1p_1, 
x^1, p_2, \lambda\bigr)$, among which the first three and the last one are $\hat{\Pi}$-invariants, and
keeping old notations for the new coordinates, we get
 $$
\hat{\Pi}_\text{new}=\langle\p_{p_1},\p_{p_2}\rangle, \quad 
\langle\p_{\lambda}\rangle_\text{new}=
\big\langle\p_{\lambda}-(\epsilon\,p_1^2-p_2)\p_{x^1}-p_1\p_{x^2}+(\tfrac23\epsilon\,p_1^3-p_1p_2)\big\rangle.
 $$
Taking the coefficients of the latter vector field with simple rescaling 
$\big[1:p_1:p_2-\epsilon\frac{p_1^2}2,p_1p_2-\epsilon\frac{p_1^3}3\big]$ and changing coordinates once again
we get (isotrivially flat) ruled Cayley cubic
 $$
z=\tfrac13x^3-xy. 
 $$

{\bf 4.}
The submaximal symmetry algebra for subconformal structures in 5D with $\delta<0$ has dimension 7 \cite{KT,Mak2}.
There is no uniqueness in this case, and the models are obtained as follows. 
Consider a projective surface $C\subset\mathbb{P}^2$ with 2-dimensional symmetry algebra; 
for the classification see \cite{Doubrov} and references therein. Consider $\cT^4=\RR^4(x^1,x^2,x^3,x^4)$ and let
$\hat{M}^6=C\times\cT^4\subset\mathbb{P}T(\cT)$ be the isotrivially flat cone structure with the fiber $C$. 

By the construction, the correspondence space $\hat{M}^6$ has a rank 3 distribution split into two integrable
subdistributions $\hat{\Pi}^2\oplus\ell$. The projection along $\ell=\langle\p_\lambda\rangle$ yields $M^5$.
Surfaces $C$ with positive definite second fundamental form correspond to zero-curvature subconformal structures with 
$\delta<0$, while those with Lorentzian signature correspond to the para-CR case $\delta>0$.

The 7-dimensional symmetry is composed of four translations $\p_{x^i}$ and the homothety $x^i\p_{x^i}$ on 
$\cT$ as well as two projective symmetries of $C$. By functoriality these pass to $\hat{M}$ and $M$.
Explicit formulae can be obtained as follows.
Choose a coordinate system $(z^0,z^1,z^2,z^3)$ on $\cT$ and let $(w^0,w^1,w^2)$ be the corresponding 
affine coordinate chart for  $\PP^3=\PP T_z(\cT)$. An isotrivially flat causal structure $\hat M\subset \PP T(\cT)$, 
adapted to these coordinates, can be written as a graph
 \begin{equation*}
w^0=G(w^1,w^2).
 \end{equation*}
The induced (3,5,6) distribution on $\hat{M}$, following Section \ref{Sec:2.4}, is given as $\hat\Pi^2\oplus\ell$ with
 \[
\hat\Pi=\langle\partial_{w^1},\partial_{w^2}\rangle,\quad \ell=
\langle\textstyle G\frac{\partial}{\partial z^0}+w^1\frac{\partial}{\partial z^1}+w^2\frac{\partial}{\partial z^2}
+\frac{\partial}{\partial z^3}\rangle.
 \]
The first summand $\hat \Pi$ is the vertical tangent bundle to $\hat M\to\cT$;
the second summand $\ell$ is the characteristic line field for the odd contact 1-form 
 \[
\alpha=dz^0-G_{w^1}dz^1-G_{w^2}dz^2+\bigl(w^1G_{w^1}+w^2G_{w^2}-G\bigr)dz^3,
 \] 
which at $(z;w)\in\hat{M}$ belongs to the pullback of the annihilator to the affine tangent space of the cone 
$\hat C_z\subset T_z\cT$ along $w\in C_z$. 

\smallskip

{\bf 5.}
Finally we consider the twistorial construction of 2-nondegenerate (para-)CR structure in dimension 7. It is a bundle 
$\hat{\cT}_\varepsilon$ over $M^5$ with the two-dimensional fiber $\mathbb{D}^2$ or $\mathbb{S}^1\times\RR^1$
according to Lemma \ref{lmT}. Denoting the coordinates in the fiber by $q_1,q_2$ and keeping
the notations $h_1,h_2$ for the generators of $H$ and $v_1,v_2$ for the generators of $V$ in \eqref{EgHV} 
the induced para-CR structure ($\varepsilon=+1$) corresponding to the Egorov structure \eqref{Egorov} is
given by the splitting 
 $$
D= \langle h_1+q_1h_2,v_2-q_1v_1,\p_{q_2}\rangle \oplus \langle h_2+q_2 h_1,v_1-q_2v_2,\p_{q_2}\rangle
 $$
into a pair of integrable subdistributions. Similarly, one gets a CR structure ($\varepsilon=-1$) corresponding 
to the Egorov structure via a pair of complex conjugated subdistributions in $D\otimes\CC$.

This twistorial construction is fully functorial: an equivalence downstairs lifts from $M^5$  to $\hat{\cT}_\varepsilon$ and, 
conversely, any symmetry of $\hat{\cT}_\varepsilon$ projects along the Levi kernel $K$ to $M^5$. 
The distribution $D$ projects to the contact structure $\Delta$. The (para-)CR structure $\mathcal{J}_\varepsilon$ 
is not projectable, but there exists a unique up to sign (para-)complex structure $\hat{J}$, 
commuting with it, cf.\ the proof of Theorem \ref{thm:2-nondegenerate-cr-2}. 
(This is analogsous to the left and right (split-)quaternionic multiplications.) 
Such $\hat{J}$ projects along $K$ to $J$ on $M^5$ making it into a zero-curvature subconformal manifold.
 
We note that 7-dimensional 2-nondegenerate CR structures arising from zero-curvature subconformal structures 
are \emph{recoverable} in the sense of \cite{SZ}. A CR structure is called recoverable if the space
\[\mathrm{ad}(K''_x):=\{\mathrm{ad}_v\,|\, v\in K''_p\}\subset \mathrm{Hom}(D'_x\slash K'_x,D''_x\slash K''_x)\]
 has vanishing first prolongation. Recall that the first prolongation is defined as the kernel of the Spencer operator $\partial\colon \mathrm{Hom}(V,W)\to \mathrm{Hom}(V\w V,W)$ where  $\partial f(v,u)=f(v)u-f(u)v.$ Using the relation \eqref{eq:2nondegeneracy}, it is straightforward to show that the first prolongation of $\mathrm{ad}(K''_x)$ 
 is zero for all $x\in\hat\cT_\ve$. This gives isomorphism of symmetry algebras. 

In particular, for the family derived from \eqref{Egorov} with $\epsilon\neq0$ we obtain 7-dimensional 2-nondegenerate 
CR structure with 8D symmetry, while for the flat case $\epsilon=0$ we get the structure with 15D symmetry algebra. 
This latter can be either $SL(4,\RR)$ or $SU(2,2)$ realizing the submaximal
symmetry dimension for 7-dimensional 2-nondegenerate CR structures.

\section{Outlook}\label{Sec:5}

In this paper we  generalized the paradigm ``Integrability via Geometry'' \cite{CK} from 3D and 4D
to dispersionless equations and Lax pairs in 5D, provided that the symbol of the equation has 
conformal symmetric bivector of rank 4 and the corresponding distribution is contact at generic solution 
$u$ of the equation $\E$.
Surprisingly this case has an underlying parabolic geometry, similar to the well-studied lower-dimensional cases.
This allowed us to identify the required curvature component of the corresponding subconformal structure.

Let us note that parabolic geometries of type $(A_3,P_{13})$ have been studied in the literature, 
for instance subconformal structures of neutral signature in 5D appeared 
both as integrable Legendrian and Levi-split CR structures. However in that class 
the torsion vanishes while the curvature may be non-zero. In our case the situation is opposite: 
the dispersionless integrability is equivalent to the vanishing of the curvature, while we allow torsion.

Some of the concepts we introduced in this paper have direct higher dimensional generalizations.
The fact that the underlying geometry is parabolic does not persist in higher dimensions,  
as we indicate in the Appendix. In our next paper 
we will address those and explain how to define the proper curvature and connect it to the integrability.

If the rank of $c_\E$ drops below the maximal value 4 or the distribution becomes (partially) integrable
for non-generic $u$, this yields a class of algebraically special solutions.
If however this happens for generic solution $u$ of the PDE $\E$,  then the underlying geometry is different.

For instance, investigating rank 3 subconformal structures on a 5D background generically meets the following aspects: 
(i) the distribution $\Delta$ has growth vector $(3,5)$ and the radical $\sqrt{\Delta}$, a rank 2 distribution with 
growth vector $(2,3,5)$ on a generic solution $u$, has higher order tensorial invariants;
(ii) a partial Weyl connection associated to the subconformal structure $c_\E$ has curvature components on its own;
(iii) a compatibility condition relates both (i) and (ii).

We expect that integrability for such a class of 5-dimensional dispersionless PDEs is also expressible 
via  certain ``zero-curvature condition'', involving the above invariants.  We do not address these question here,
but raise the problem of describing the corresponding integrable background geometry as a parabolic geometry. 
We also expect that 5D zero-curvature systems discussed in this paper pass the test for hydrodynamic integrability 
as in \cite{FK} as well as adaptation for the deformation scheme as in \cite{St}.

\appendix
\setcounter{equation}{0}
\setcounter{subsection}{0}
 \setcounter{theorem}{0} 

\section*{Appendix: Integrable parabolic background geometries}\label{ApB}
\renewcommand{\theequation}{A.\arabic{equation}}
\renewcommand{\thesection}{A}

Let us define a class of parabolic geometries that may serve as \emph{integrable background geometry}
by first specifying the compatibility properties discussed in Section \ref{Sec:1.1}, which only constrain 
the algebraic type of the geometry and does not restricts the curvature properties.

\begin{definition}\label{Df1}
Given a semi-simple Lie group $G$ and a parabolic subgroup $P\subset G$, a parabolic geometry of type $(G,P)$ 
is called a \emph{compatible background geometry} if it admits a subconformal structure $(\g_{-1},c)$
with the following compatibility conditions:
 \begin{itemize}
\item $\dim\g_{-1}=3\text{ or }4$, 
\item there exists a (unique)  $G_0$-invariant conformal structure $c$ on $\g_{-1}$,
\item there exists a 1-parameter family of $c$-null 2-planes that are Abelian in $\m=\g_{-}$.
 \end{itemize}
These 2-planes are henceforth called $\a$-planes. 
\end{definition}

\begin{definition}
A compatible parabolic background geometry is called an \emph{integrable background geometry} 
if every $\a$-plane is tangent to an $\a$-surface, i.e.\ a surface whose tangents are $\a$-planes. 
\end{definition}

Note that we included the case of rank 3 subconformal structure, whose integrability requires a choice  of Weyl connection,
which is an additional constraint on the parabolic geometry.
Geometries of rank  2 and 1 may also be considered as an instance of integrable background geometry,
within the paradigm of \cite{C}, however in this paper we restrict to ranks 3 and 4.

The uniqueness claim in Definition \ref{Df1} is not obvious a priori but is a by-product of the following classification result.
 \begin{theorem}\label{IBG}
The only compatible parabolic background geometries are those of the types $(B_2,P_1)$, 
$(D_3,P_1)$ and $(D_3,P_{2,3})$, as well as $(B_3,P_{13})$ and $(C_3,P_2)$. 
 \end{theorem}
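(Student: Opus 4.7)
I would prove the classification by enumerating all parabolic pairs $(\g,\fp)$ satisfying the three conditions of Definition~\ref{Df1} and discarding all but the five announced cases. The enumeration splits naturally into three steps.

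\emph{Step 1 (combinatorial enumeration).} Parabolic subalgebras of a complex simple $\g$ are labelled by subsets $\Sigma$ of simple roots, and the induced grading has $\dim\g_{-1}$ equal to the number of positive roots of $\Sigma$-height $1$. I would run through the classical and exceptional types and list every $(X_n,\Sigma)$ with $\dim\g_{-1}\in\{3,4\}$; since this dimension grows rapidly with rank and with $|\Sigma|$, only a short list of candidates arises.

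\emph{Step 2 (invariant conformal structure).} For each candidate I would read off the $\g_0^{\rm ss}$-module structure on $\g_{-1}$ from the Dynkin subdiagram of uncrossed nodes together with the bracket relations to the crossed root spaces. The existence of a $G_0$-invariant conformal class on $\g_{-1}$ is equivalent to the trivial module appearing in $S^2\g_{-1}^*$, and uniqueness to it appearing exactly once; this forces $\g_0^{\rm ss}$ to act via a subalgebra of $\mathfrak{co}(3)$ or $\mathfrak{co}(p,q)$ with $p+q=4$ and eliminates most of the remaining candidates.

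\emph{Step 3 (Abelian $\a$-planes).} For the survivors I would verify the existence of a $\PP^1$-family of $c$-null $2$-planes in $\m$ that are Abelian, $[V,V]=0$. When the grading has depth $1$, $\m=\g_{-1}$ is abelian automatically and the condition is purely conformal: signature $(2,2)$ on $\dim\g_{-1}=4$ gives the familiar $\a$/$\b$ families and yields $(D_3,P_1)$, while $\dim\g_{-1}=3$ forces passing to a Weyl structure as done for $(SO(2,3),P_1)$ in the paper, yielding $(B_2,P_1)$. When the grading is contact of depth $2$ (so $\dim\g_{-2}=1$), the Abelian condition reduces to Lagrangianness with respect to the symplectic bracket $\Lambda^2\g_{-1}\to\g_{-2}$, and combined with nullity the analysis of Section~\ref{Sec:2.1} produces the required $\PP^1$-family precisely for $(D_3,P_{2,3})$, $(B_3,P_{13})$ and $(C_3,P_2)$.

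The \emph{main obstacle} is the bookkeeping in Step~3 for depth-$2$ gradings with $\dim\g_{-2}>1$ and for the few depth-$3$ candidates surviving Step~1: one must rule them out by showing that the intersection of the null quadric in $\op{Gr}_2(\g_{-1})$ with the locus of Abelian $2$-planes is either empty or fails to form a $\PP^1$. This reduces to explicit computations with the Chevalley structure constants of $\g$ on the low-height root spaces, which are finite but delicate; as a by-product of the case-by-case analysis, the uniqueness of the $G_0$-invariant conformal structure asserted in Definition~\ref{Df1} is also established.
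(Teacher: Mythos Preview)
Your overall strategy matches the paper's: enumerate candidates by $\dim\g_{-1}$, then filter by existence of a $\g_0$-invariant conformal class, then verify the $\alpha$-plane condition. However, Step~3 contains a concrete error that would break the argument.

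You assert that $(B_3,P_{13})$ and $(C_3,P_2)$ are contact gradings, i.e.\ depth~$2$ with $\dim\g_{-2}=1$, and that the analysis of Section~\ref{Sec:2.1} therefore applies to them. Neither is true. For $(B_3,P_{13})$ the grading has depth~$3$ with dimensions $(2,2,4,5,4,2,2)$, and for $(C_3,P_2)$ it has depth~$2$ but $\dim\g_{-2}=3$, with dimensions $(3,4,7,4,3)$; both facts are recorded explicitly in the Appendix following the theorem. The Lagrangian/compatibility analysis of Section~\ref{Sec:2.1} is specific to the Heisenberg symbol of $(A_3,P_{13})$ and does not transfer. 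Consequently your ``main obstacle'' paragraph is inverted: the non-contact cases with $\dim\g_{-2}>1$ or depth~$3$ are not all to be ruled out---two of them are precisely the new survivors, and what is actually required is a direct verification for each that the null $2$-planes in $\g_{-1}$ which are Abelian in $\m$ form a $\PP^1$. The paper does this by exhibiting the congruence explicitly in terms of root vectors.

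A smaller omission: you restrict to simple $\g$ throughout, but Definition~\ref{Df1} allows semi-simple $\g$, and the paper's proof spends a paragraph excluding products of lower-rank factors. Without that step the classification is incomplete.
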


Here we used the exceptional isomorphism $(A_3,P_{1,3})=(D_3,P_{2,3})$ over $\CC$.
Over $\RR$ the compatible parabolic background geometries of the above type are:
$(\mathfrak{so}(2,3),\fp_1)$, $(\mathfrak{so}(3,3),\fp_1)$, $(\mathfrak{so}(3,3),\fp_{23})$,
$(\mathfrak{su}(2,2),\fp_{13})$, $(\mathfrak{so}(3,4),\fp_{13})$, $(\mathfrak{sp}(6,\RR),\fp_2)$.

 \begin{proof}
We will work with Lie algebras. The constraint of $\g_0$-equivariance is enough to exclude false candidates.
For the remaining five, the equivariance on the group level is straightforward. Untill the end of the
proof we work over $\CC$ (the claim over $\RR$ follows by direct inspection).

We start with simple Lie algebras. Their structure root theory implies $\dim\g_{-1}\geq n=\op{rank}(\g)$ 
(the equality is achieved on the Borel parabolic subalgebra $\fp_{1\dots n}$). Indeed,
if $\a_i$ are simple roots, then $\sum_{i\in S}\a_i$ is also a root for any connected piece $S$ of the Dynkin diagram.
Thus, there are at least $n$ roots of this form, where $S$ contains only one crossed node in $\Sigma$
for $\fp=\fp_\Sigma$.

This already makes the list of candidates finite. Further restrictions are:
 \begin{itemize}
\item Parabolics given by more than 2 crosses ($\fp_\Sigma$ with $|\Sigma|>2$) have
3 independent scalings, and so cannot conformally preserve a bi-linear form of rank $\ge3$;
\item The Dynkin diagram of the Levi part $\g_0^{ss}$, obtained by excluding crosses,
must be $B_1=A_1$ or $D_2=A_1A_1$, because in other cases $\g_0^{ss}$ cannot preserve a conformal structure.
 \end{itemize}
Consequently, for $\dim\g_{-1}=3$ the candidates for compatible parabolic background geometries are 
(we use outer automorphisms of $\g$ to exclude repetitions):
 \begin{gather*}
(A_3,P_{12}),\ (B_2,P_1),\ (B_3,P_{23}),\ 
(C_3,P_{12}),\ (C_3,P_{23}).
 \end{gather*}
 For $\dim\g_{-1}=4$ the candidates are (we use exceptional isomorphisms $B_2=C_2$, $A_3=D_3$):
 \begin{gather*}
(A_3,P_2),\ (A_3,P_{13}),\ (A_4,P_{23}),\ 
(B_3,P_{12}),\ (B_3,P_{13}),\
(C_3,P_2),\ (C_4,P_{23}),\ 
(G_2,P_2),\ (F_4,P_{23}).
 \end{gather*}

Among these only the following have $\g_0$-invariant conformal structure
 \begin{equation}\label{list}
(B_2,P_1);\
(A_3,P_2),\ (A_3,P_{13}),\ (B_3,P_{13}),\ (C_3,P_2).
 \end{equation}
The recipe to do so is as follows: determine the $\g_0$-module $\g_{-1}$ and compute its $[2]$ plethysm; for instance
in the case $(B_3,P_{23})$ we have $\g_0^{ss}=A_1$ and $\g_{-1}=1\!\times\![0]+1\!\times\![1]$,
whence $S^2\g_{-1}=1\!\times\![0]+1\!\times\![1]+1\!\times\![2]$ but the trivial representation 
corresponds to rank 1 bilinear form, and hence an invariant conformal metric does not exist.

The first case in \eqref{list} has 3-dimensional irreducible manifold ($\g_{-1}=TM$), and the remaining cases have
4-dimensional $\g_{-1}$: on the manifolds of dimensions 4, 5, 8 and 7, respectively.
A straigtforward computation shows that each is a compatible background geometry.

If $\g$ is semi-simple but not simple, it must be a product of
 \begin{equation}\label{list2}
(A_1,P_1),\ (A_2,P_1),\ (A_2,P_{12}),\ (B_2,P_2),\ (B_2,P_{12}),\ (G_2,P_1),\ (G_2,P_{12})
 \end{equation}
with $\dim\g_{-1}=1\vee 2$. In this list only geometries with the Borel subgroup $B\subset G$ possess
$\g_0$-invariant conformal structure on $\g_{-1}$. However in the product, i.e.\ for non-simple $G$,
with factors from \eqref{list} and \eqref{list2}, no such geometry can have $\g_0$-invariant conformal structure on $\g_{-1}$
(there are however $\g_0^{ss}$-invariant such). 
This finishes the proof.
 \end{proof}
 
 \begin{corollary}
The only integrable parabolic background geometries are those listed in Theorem \ref{IBG} with their respective 
corresponding zero-curvature condition. 
  \end{corollary}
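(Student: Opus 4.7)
One direction is immediate from Definition \ref{Df1} and Theorem \ref{IBG}: an integrable parabolic background geometry is, by definition, compatible, and Theorem \ref{IBG} lists all compatible parabolic background geometries. So the content of the corollary is to identify, for each of the five types in the list, a specific curvature component whose vanishing is equivalent to the existence of $\a$-surfaces tangent to every $\a$-plane, and to recognize this as a zero-curvature condition in the sense of the paper.

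My plan is to treat each type uniformly, mimicking the strategy of Sections \ref{Sec:2.2}--\ref{Sec:2.3} for $(A_3,P_{13})$. For a compatible type $(G,P_\Sigma)$ from Theorem \ref{IBG}, the $\a$-planes form a $\PP^1$-bundle $\mathcal{A}\to M$, and the pullback $\pi_*^{-1}\Pi$ of the tautological 2-plane field to the correspondence space $\hat M$ is a canonical rank 3 distribution with a distinguished radical $\hat\Pi\subset\pi_*^{-1}\Pi$ of rank 2 (the naïve lift computation in Section \ref{Sec:2.2} applies verbatim once the Lie bracket structure on $\g_{-}$ is known and the compatibility ensures Abelianness of $\a$-planes). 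An $\a$-surface through a point of $M$ tangent to a given $\a$-plane exists iff $\hat\Pi$ is Frobenius-integrable along the corresponding fiber direction, and the pointwise obstruction, evaluated along the $\PP^1$ of $\a$-planes, is a quartic $W\in\Gamma(S^4\Pi^*\otimes(\cdots))$ exactly as in \eqref{WW-}.

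The next step is to identify this $W$ as a harmonic curvature component. Since the Cartan connection of each type is uniquely normalized by $\p^*\kappa=0$, the component of $\kappa$ obstructing the lift is determined by its image in $H^2(\g_{-},\g)$, which decomposes into $\g_0$-irreducible modules by Kostant's theorem. For $(B_2,P_1)$ the relevant module is the trace-free symmetric Ricci tensor of the Weyl connection, reproducing the Einstein--Weyl condition as in \cite{CK}; for $(A_3,P_2)$ it is the self-dual part of the Weyl tensor, reproducing self-duality, again as in \cite{CK}; for $(A_3,P_{13})$ it is the module $\mathbb{V}_4^{(1,1)}$ identified in \eqref{H2+} and Proposition \ref{kH}. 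For the two remaining cases $(B_3,P_{13})$ and $(C_3,P_2)$, one carries out the Kostant computation: list the affine Weyl reflections of length 2 (Hasse words of length 2 on the crossed nodes), compute the resulting highest weights $w\cdot\lambda$ for the adjoint representation, and read off the $\g_0^{ss}$-modules and grading weights. In each case exactly one irreducible component occurs whose weight matches that of a section of $S^4\Pi^*\otimes(\cdots)$, i.e.\ of a quartic on the $\PP^1$ of $\a$-planes; this component is $W$, and its vanishing is a genuine zero-curvature condition because other harmonic components persist (and contribute torsions $\tau_\pm$ as in the $(A_3,P_{13})$ case).

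Finally I would verify consistency: in every case the module $W$ sits in positive homogeneity with respect to the grading element, so the equation $W=0$ does not collapse the geometry to a flat model; and by normality $W=0$ suffices for Frobenius integrability of $\hat\Pi$, since the non-$W$ harmonic components live in modules of different $G_0$-weight and cannot enter the $d\lambda$-component of the commutator. The hard part will be the explicit Kostant computation for $(B_3,P_{13})$ and $(C_3,P_2)$, in particular disentangling the $\g_0^{ss}$-module structure and verifying that precisely one cohomology class gives a quartic over the $\PP^1$-bundle of $\a$-planes; the low-rank coincidences $B_2=C_2$ and $A_3=D_3$ used in the proof of Theorem \ref{IBG} do not simplify these two cases, so the computation is essential rather than a corollary of earlier dimensions.
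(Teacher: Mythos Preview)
Your first paragraph is exactly right and is essentially all the paper offers as proof: an integrable background geometry is by definition compatible, so Theorem \ref{IBG} applies, and for each listed type the existence of $\a$-surfaces through every $\a$-plane is a differential condition that one labels the ``zero-curvature condition''. The paper does not prove more than this for the corollary itself; the identification of the curvature components for the two new cases $(B_3,P_{13})$ and $(C_3,P_{2})$ is deferred to the subsequent discussion (and, for the 8D case, to a forthcoming paper).

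Your plan to carry out the identification uniformly via Kostant's theorem and harmonic curvature contains a genuine error. For $(B_3,P_{13})$ the relevant obstruction does \emph{not} sit in positive homogeneity: the paper computes the lowest weight vector of the cohomology module as $e_{\a_1}\wedge e_{\a_3}\otimes e_{-\a_1-2\a_2-2\a_3}$, which has homogeneity $-1$ with respect to the grading element. Consequently, when $W\neq 0$ the parabolic geometry is non-regular, and the zero-curvature condition cannot be extracted as a component of the harmonic curvature $\kappa_H$ in the standard normalization framework you invoke. Your consistency check (``in every case the module $W$ sits in positive homogeneity'') therefore fails in this case, and so does the argument that normality alone forces the non-$W$ components out of the $d\lambda$-part of the commutator. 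The na\"ive lift of Section \ref{Sec:2.2} still produces an integrability obstruction, but tying it to parabolic harmonic curvature in the way you propose does not go through for $(B_3,P_{13})$; one must work outside the regular parabolic formalism there.
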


These so-called ``zero-curvature'' conditions were already discussed for the standard conformal geometries  $(B_2,P_1)$ in 3D, 
with a choice of Weyl structure, and $(D_3,P_1)$ in 4D,   as well as for the geometry $(A_3,P_{13})$ studied in this paper.
The novel candidates are $(B_3,P_{13})$ and $(C_3,P_2)$ in 8D and 7D, respectively.
While higher dimensions will be considered in a forthcoming paper (where the corresponding curvature $W$ will be introduced)
let us briefly comment on the two new cases.

\smallbreak

{\bf 8D case.} The grading of $B_3$ corresponding to parabolic $P_{13}$ is
$\g=\g_{-3}\oplus\g_{-2}\oplus\g_{-1}\oplus\g_0\oplus\g_1\oplus\g_2\oplus\g_3$
with dimensions of components $(2,2,4,5,4,2,2)$. In terms of root vectors,
$\g_0$ is generated by the Cartan subalgebra $\h$ and $e_{\pm\a_2}$, while
 $$
\g_{-1}=\langle e_{-\a_1},e_{-\a_1-\a_2},e_{-\a_3},e_{-\a_2-\a_3}\rangle\
\text{ and }\ \g_{-2}=\langle e_{-\a_1-\a_2-\a_3},e_{-\a_2-2\a_3}\rangle.
 $$
The only $\g_0$-invariant conformal metric (after proper normalization of root vectors) is
 $$
g=e_{\a_1}\cdot e_{\a_2+\a_3}+e_{\a_1+\a_2}\cdot e_{\a_3} 
 $$
The only congruence of $\a$-planes ($g$-null and $[,]$-isotropic) is
 $$
\langle e_{-\a_1}+\l e_{-\a_1-\a_2}, e_{-\a_3}-\l e_{-\a_2-\a_3}\rangle.
 $$
This lifts to a canonical rank 2 distribution in the corresspondence space, which is a parabolic geometry of
type $(B_3,P_{123})$ and a bundle over $(B_3,P_{13})$ with fiber $\PP^1$. However
the curvature $W$, corresponding to the Frobenius condition, takes value in the component of 
the cohomology $H^2(\g_-,\g)$, which is a $\g_0^{ss}$ irreducible module with the lowest weight vector
 $$
e_{\a_1}\wedge e_{\a_3}\otimes e_{-\a_1-2\a_2-2\a_3}.
 $$
This corresponds to torsion of negative homogeneity $-1$ with respect to the grading element $Z\in\g_0$.
We refer for the technique behind this computation and the general theory of parabolic geometries to \cite{CS}.
In non-trivial case $W\neq0$, this means that the parabolic geometry is non-regular and hence the zero-curvature condition 
cannot be obtained as a component of the harmonic curvature in the standard parabolic formalism.

\smallbreak

{\bf 7D case.} The grading of $C_3$ corresponding to parabolic $P_2$ is
$\g=\g_{-2}\oplus\g_{-1}\oplus\g_0\oplus\g_1\oplus\g_2$
with dimensions of components $(3,4,7,4,3)$. In terms of root vectors,
$\g_0$ is generated by the Cartan subalgebra $\h$ and $e_{\pm\a_1},e_{\pm\a_3}$, while
 $$
\g_{-1}=\langle e_{-\a_2},e_{-\a_1-\a_2},e_{-\a_2-\a_3},e_{-\a_1-\a_2-\a_3}\rangle
\text{ and } \g_{-2}=\langle e_{-2\a_2-\a_3},e_{-\a_1-2\a_2-\a_3},e_{-2\a_1-2\a_2-\a_3}\rangle.
 $$
The only $\g_0$-invariant conformal structure, after proper normalization of root vectors, is
 $$
g=e_{\a_2}\cdot e_{\a_1+\a_2+\a_3}+e_{\a_1+\a_2}\cdot e_{\a_2+\a_3} 
 $$
The only congruence of $\a$-planes ($g$-null and $[,]$-isotropic) is
 $$
\langle e_{-\a_2}+\l e_{-\a_2-\a_3}, e_{-\a_1-\a_2}-\l e_{-\a_1-\a_2-\a_3}\rangle.
 $$
This lifts to a canonical rank 2 distribution in the corresspondence space, which is a parabolic geometry of
type $(C_3,P_{23})$ and a bundle over $(C_3,P_2)$ with fiber $\PP^1$. 
In fact, the preimage of the above congruence is the distribution with growth  $(3,2,3)$ in the correspondence space.
The curvature $W$, corresponding to the Frobenius condition, takes value in the component of 
the cohomology $H^2(\g_-,\g)$, which is a $\g_0^{ss}$ irreducible module with the lowest weight vector
 $$
e_{\a_2}\wedge e_{\a_1+\a_2}\otimes e_{-\a_3}.
 $$
This has homogeneuity $2$ with respect to the grading element $Z\in\g_0$, so the geometry is regular.
Thus, the corresponding  $W$ component of $\kappa_H^2$ can be computed through the technique
of parabolic geometry, similar as we did with the 5D subconformal case in this paper.

The zero-curvature condition $W=0$ of the subconformal geometry is however not sufficient to identify the twistor 
space with the parabolic geometry of type $(C_3,P_3)$ unless the initial parabolic geometry of type $(C_3,P_2)$ is flat. Indeed, there is another harmonic curvature component $\kappa_H^1$ of homogeneity 1, 
which must vanish in order for parabolic geometry to descent. 
This is yet another analog of the classical conformal geometry in 4D, in which case the twistor space is never
projective unless the conformal structure is flat.

\bigskip

\begin{center}{\sc Funding}\end{center}

\smallskip

The research leading to our results has received funding from the Norwegian Financial Mechanism 2014-2021
(project registration number 2019/34/H/ST1/00636), the Polish
National Science Centre (NCN) (grant number 2018/29/B/ST1/02583), and the Tromsø Research Foundation
(project “Pure Mathematics in Norway”). OM gratefully acknowledges partial support by the grant  PID2020-116126GB-I00 provided via the Spanish Ministerio de Ciencia e Innovaci\'on MCIN/ AEI /10.13039/50110001103.
 

\end{document}